\newcommand{\rH}{\mathrm{ H}}
\newcommand{\rA}{\mathrm{ A}}
\newcommand{\divv}{{\rm div}\,}
\definecolor{light}{gray}{.96}
\newcommand\think[1]{}
\newcommand\delb[1]{}
\numberwithin{equation}{section}
\newcommand\Eb{\mathbb{E}}
\newcommand\N{\mathbb{N}}
\def\v{\mathrm{v}}
\newcommand\R{\mathbb{R}}
\newcommand\dom{\mathcal{O}}
\newcommand\E{\mathbb{E}}
\newcommand\T{\mathbb{T}^2}
\newcommand\tp{\tilde{\mathbb{P}}}
\newcommand\hp{\hat{\mathbb{P}}}
\newcommand\tom{\tilde{\Omega}}
\newcommand\ho{\hat{\Omega}}
\newcommand\rV{\mathrm{V}}
\newcommand\rD{\mathrm{D}}
\newcommand{\tunk}{{\tilde{u}}_{{n}_{k}}}
\newcommand{\tu}{\tilde{u}}
\newcommand{\ccal}{\mathcal{C}}
\newcommand{\kcal}{\mathcal{K}}
\newcommand{\zcal}{\mathcal{Z}}
\newcommand{\nat}{\mathbb{N}}
\newcommand\un{u_n(t)}
\newcommand\uns{u_n(s)}
\newcommand\tus{\tilde{u}(s)}
\newcommand\tuns{\tilde{u}_n(s)}
\newcommand\tusi{\tilde{u}(\sigma)}
\newcommand\tunsi{\tilde{u}_n(\sigma)}
\newcommand\tun{\tilde{u}_n}
\newtheorem{theorem}{Theorem}[section]
\newtheorem{alg}[theorem]{Algorithm}
\newtheorem{corollary}[theorem]{Corollary}
\newtheorem{defn}[theorem]{Definition}
\newtheorem{example1}[theorem]{Example}
\newtheorem{exercise1}[theorem]{Exercise}
\newtheorem{lemma}[theorem]{Lemma}
\newtheorem{remark1}[theorem]{Remark}
\newtheorem{digression1}[theorem]{Digression}
\newenvironment{definition}{
\begin{defn}
	\normalfont}{
\end{defn}
}
\newenvironment{remark}{
\begin{remark1}
	\normalfont}{
\end{remark1}
}
\newtheoremstyle{AppALem}{1}{1}
  {\itshape}{0pt}{\bfseries}{.}{ }
   {\thmname{Lemma }\thmnumber{A.{#2}}{\thmnote{}}}
   \theoremstyle{AppALem}
\newtheoremstyle{AppBLem}{1}{1}
  {\itshape}{0pt}{\bfseries}{.}{ }
   {\thmname{Lemma }\thmnumber{B.{#2}}{\thmnote{}}}
   \theoremstyle{AppBLem}
\newtheoremstyle{AppBRem}{1}{1}
  {\itshape}{0pt}{\bfseries}{.}{ }
   {\thmname{Remark }\thmnumber{B.{#2}}{\thmnote{}}}
   \theoremstyle{AppBRem}
\newtheoremstyle{AppBCor}{1}{1}
  {\itshape}{0pt}{\bfseries}{.}{ }
   {\thmname{Corollary }\thmnumber{B.{#2}}{\thmnote{}}}
   \theoremstyle{AppBCor}
\definecolor{darkred}{rgb}{0.9,0.1,0.1}
\definecolor{darkblue}{rgb}{0.1,0.1,0.9}
\begin{document}
\title{Stochastic Constrained Navier-Stokes Equations on $\T$}
\author{Zdzis\l aw Brze\'{z}niak}
\address{Department of Mathematics, University of York, Heslington, York,
YO10 5DD, UK}
\email{zdzislaw.brzezniak@york.ac.uk}
\thanks{The research of Zdzislaw Brze{\'z}niak has been partially supported by the Leverhulme project grant ref no RPG-2012-514. The research of Gaurav Dhariwal was supported by Department of Mathematics, University of York.}

\author{Gaurav Dhariwal}
\address{Institue of Analysis and Scientific Computing, Vienna University of Technology, Wiedner Haupstrasse 8--10, 1040 Wien, Austria}
\email{gaurav.dhariwal@tuwien.ac.at}

\keywords{Stochastic Navier-Stokes, constrained energy, periodic boundary conditions, martingale solution, strong solution.}

\date{\today}


\begin{abstract}
We study constrained 2-dimensional Navier-Stokes Equations driven by a multiplicative Gaussian noise in the Stratonovich form.
In the deterministic case \cite{[BDM16]}  we  showed the existence of   global solutions only  on a two dimensional torus and hence we concentrated on such a case here.
We prove the existence of a martingale solution and later using Schmalfuss idea \cite{[Schmalfuss97]} we show the pathwise uniqueness of the solutions.
We also establish the existence of a strong solution using a Yamada-Watanabe type result from Ondrej\'{a}t \cite{[Ondrejat04]}.
\end{abstract}
\maketitle


\section{Introduction}
\label{s:1}

In the present article we consider the stochastic Navier-Stokes equations
\begin{equation}
\label{eq:1.1}
du + \left[(u \cdot \nabla )u - \nu \Delta u + \nabla p\right]\,dt = \sum_{j=1}^m (c_j \cdot \nabla) u \circ dW_j(t), \quad t \in [0,\infty)
\end{equation}
in $\dom=[0,2\pi]^2$ with periodic boundary conditions and
with the incompressibility condition
\[ \mathrm{div}\, u = 0.\]
This problem can be identified as a problem on a two-dimensional torus $\mathbb{T}^2$ what we will assume to be our case.
Here  $u : [0,\infty) \times \dom \to \R^2$ and $p : [0,\infty) \times \dom \to \R$ represent the velocity and the pressure of the fluid. Furthermore $\sum_{j=1}^m(c_j\cdot \nabla)u \circ dW_j(t)$
stands for the random forcing, where $c_j$, $j=1,\cdots, m$,  are divergence free $\R^2$-valued vectors (so that the corresponding transport operators $\tilde{C}_ju := (c_j \cdot \nabla)u$ are skew symmetric in $L^2(\T, \R^2)$) and  $W_j$, $j = 1, \dots, m$ are independent $\R-$valued standard Brownian Motions.

The above problem projected on $\mathrm{H} \cap \mathcal{M}$ can be written in an abstract form as the following initial value problem
\begin{equation}
\label{eq:1.2}
\begin{cases}
du(t) + \nu \mathrm{A} u(t)\, dt + B(u(t))\, dt = \nu |\nabla u(t)|_{L^2}^2 u(t) \,dt + \sum_{j=1}^m C_j u(t) \circ dW_j(t), \; t \in [0,T], \\
u(0) = u_0,
\end{cases}
\end{equation}
where $\mathrm{H}$ is the space of square integrable, divergence free and mean zero vector fields on $\dom$ and
\[ \mathcal{M} = \{ u \in \mathrm{H} : |u|_{L^2} = 1\}.\]
Here $\mathrm{A}$ and $B$ are appropriate maps corresponding to the Laplacian and the nonlinear term respectively, appearing in the Navier-Stokes equations, see Section \ref{s:2} and $C_j = \Pi (\tilde{C}_j)$, where $\Pi : L^2(\T, \R^2) \rightarrow \rH$ is the Leray-Helmholtz projection operator \cite{[Temam79]} that projects the square integrable vector fields onto the divergence free vector field.\\

We prove the existence and uniqueness of a strong solution. The construction of a solution is based on the classical Faedo-Galerkin approximation, i.e.
\begin{equation}
\label{eq:1.3}
\begin{cases}
du_n(t) = -\left[P_n \mathrm{A} u_n(t) + P_nB(u_n(t)) - |\nabla u_n(t)|^2_{L^2} u_n(t) \right]\,dt \\
\quad \quad \quad \quad + \sum_{j=1}^m P_n C_j u_n(t) \circ dW_j(t), \quad t \in [0,T],\\
u_n(0) = \dfrac{P_n u_0}{|P_n u_0|}
\end{cases}
\end{equation}
given in Section~\ref{s:5}. Let us point out that without the normalisation of the initial condition in the above problem \eqref{eq:1.3}, the solution may not be a global one, even in the deterministic case. The crucial point is to prove suitable uniform a'priori estimates on the sequence $u_n$. We will prove that the following estimates hold
\[\sup_{n \ge 1} \E \left[\int_0^T |u_n(s)|^2_{\mathrm{D}(\mathrm{A})}\,ds \right] < \infty,\]
and
\[\sup_{n \ge 1} \E \left(\sup_{0 \le s \le T} \|u_n(s)\|^{2p}_{\mathrm{V}} \right) < \infty,\]
for $p \in [1, 1 + \frac{1}{K_c^2})$, where  $\mathrm{D}(\mathrm{A})$ is the domain of the Stokes operator and $\mathrm{V}=D(\mathrm{A}^{1/2})$,  see Section~\ref{s:2} for precise definitions and
the positive constant $K_c$ is defined in \eqref{eqn-K_c}.\delb{ Assumption $(A.1)$.}

In Theorem~\ref{thm5.1} we prove the existence of a martingale solution using the tightness criterion in the topological space  $\mathcal{Z}_T = \ccal([0,T]; \rH) \cap L^2_{\mathrm{w}}(0,T; \rD(\mathrm{A}))\cap L^2(0,T; \rV) \cap \ccal([0,T]; \rV_{\mathrm{w}})$ showing that the trajectories of the solution lie in $\ccal([0,T]; \rV_{\mathrm{w}})$ but later on, in Lemma~\ref{lemma6.1} we show that in fact the trajectories  lie in $\ccal([0,T]; \rV)$.\\

Our work is an extension of a recent article by the two authours and Mauro Mariani \cite{[BDM16]} from the deterministic to a stochastic setting. More information and motivation can also be found therein. Let us recall that already in the deterministic setting, we have been able to prove the global existence of solutions for Constrained Navier-Stokes equations (CNSEs) only with periodic boundary conditions and this is why we have concentrated here on such a case. A similar problem for  stochastic heat equation with polynomial drift but with a different type of noise has recently been a subject of a PhD thesis by Javed Hussain \cite{[Hussain15]}. It's remarkable that in that case the result holds for Dirichlet boundary conditions as well.

We consider the noise of gradient type in the Stratonovich form \eqref{eq:1.1}. The structure of noise is such that it is tangent to the manifold $\mathcal{M}$ just like the non-linear part from Navier-Stokes and hence there is no contribution to the equation \eqref{eq:1.2} because of the constraint. In the deterministic setting \cite{[BDM16]} we proved the existence of a global solution by proving the existence of a local solution using Banach Fixed Point Theorem; and no explosion principle, i.e enstrophy ($\rV-$ norm) of the solution remains bounded. We can't take the similar approach in the stochastic setting as one can't prove the existence of a local solution using the Banach Fixed Point Theorem and hence we switch to more classical approach of proving the existence of a solution using the Faedo-Galerkin approximation.

We consider the Faedo-Galerkin approximation \eqref{eq:1.3} of \eqref{eq:1.2}. We prove that each approximating equation has a global solution. One can show that for every $n \in \N$ global solution to \eqref{eq:1.3} exist for all domains, in particular for Dirichlet boundary conditions. But in order to obtain a'priori estimates as in Lemma~\ref{lemma5.5}, we need to consider the Navier-Stokes Equations (NSEs) on a two dimensional torus $\mathbb{T}^2$ (i.e. the NSEs with the periodic boundary conditions).

In order to prove that the laws of the solution of these approximating equations are tight on $\mathcal{Z}_T$ (defined in \eqref{eq:3.1}), apart from a'priori estimates we also need the Aldous condition, Definition~\ref{defn3.4}. After proving that the laws are tight in Lemma~\ref{lemma5.6}, by the application of the Jakubowski-Skorokhod Theorem and the martingale representation theorem we prove Theorem~\ref{thm5.1}. The paper is organised in the following way$\colon$

In Section~\ref{s:2} we introduce some functional spaces and certain linear operators along with the well-established estimates. Stochastic Constrained Navier-Stokes Equations (SCNSEs) are introduced in Section~\ref{s:4} along with the definitions of a martingale solution and strong solution and all the important results of this paper. Section~\ref{s:3} contains all the well-known and already established results regarding compactness. In Section~\ref{s:5} we establish certain estimates on the way to prove Theorem~\ref{thm5.1}. We conclude the paper by proving the existence and uniqueness of a strong solution using the results from Ondrej\'{a}t \cite{[Ondrejat04]} in Section~\ref{s:6}.

\section{Functional setting}
\label{s:2}

Let $\dom \subset \R^2$ be a bounded domain with periodic boundary conditions. Let $p \in [1, \infty)$ and let $\mathbf{L}^p(\dom)=L^p(\dom, \R^2)$ denote the Banach space of Lebesgue measurable $\R^2$-valued $p$-th power integrable functions on the set $\dom$. The norm in $\mathbf{L}^p(\dom)$ is given by
\[|u|_{L^p} := \left( \int_\dom |u(x)|^p\,dx\right)^{\frac1p}, \quad u \in \mathbf{L}^p(\dom).\]

By $\mathbf{L}^\infty(\dom)=L^\infty(\dom, \R^2)$ we denote the Banach space of lebesgue measurable essentially bounded $\R^2$-valued functions defined on $\dom$. The norm is given by
\[ |u|_{\mathbf{L}^\infty(\dom)} := \mathrm{esssup}\left\{|u(x)|, x \in \dom \right\}, \quad u \in \mathbf{L}^\infty(\dom).\]

If $p =2$, then $\mathbf{L}^2(\dom)=L^2(\dom, \R^2)$ is a Hilbert space with the scalar product given by
\[\langle u, \v \rangle_{L^2} := \int_\dom u(x) \cdot \v(x) \,dx, \quad u,\v \in \mathbf{L}^2(\dom).\]

Let $k \in \mathbb{N}$, $p \in [1, \infty)$. By $\mathbf{W}^{k,p}(\dom)=W^{k,p}(\dom, \R^2)$ we denote the Sobolev space of all $u \in \mathbf{L}^p(\dom)$ for which there exist weak derivatives $D^\alpha u \in \mathbf{L}^p(\dom)$, $|\alpha| \leq k$. For $p = 2$, we will write $W^{k,2}(\dom, \R^2) =: H^k$ and will denote it's norm by $\|\cdot\|_{H^k}$. In particular $H^1$ is a Hilbert space with the scalar product given by
\[\langle u, \v \rangle_{H^1} := \langle u, \v \rangle_{L^2} + \langle \nabla u, \nabla \v \rangle_{L^2}, \quad u, \v \in H^1(\dom).\]

Let $\ccal^\infty_c(\dom, \R^2)$ denote the space of all $\R^2-$valued functions of class $\ccal^\infty$ with compact support contained in $\dom$. We introduce the following spaces:
\begin{equation}
\label{eq:2.1}
\begin{split}
\mathcal{V} & = \left\{u \in \ccal^\infty_c(\dom, \R^2) : \mathrm{div}\,u = 0 \right\},\\
\mathbb{L}^2_0 & = \left\{u \in L^2(\T, \R^2) : \int_{\T} u(x)\,dx = 0 \right\},\\
{\mathrm{H}} & = \left\{ u \in \mathbb{L}^2_0 :\mathrm{div}\,u = 0 \right\},\\
\mathrm{V} &= H^1 \cap {\mathrm{H}}.
\end{split}
\end{equation}
We endow ${\mathrm{H}}$ with the scalar product and norm of $L^2$ and denote it by
\[\langle u , \v \rangle_{{\mathrm{H}}} := \langle u, \v \rangle_{L^2}, \quad \quad  |u|_{{\mathrm{H}}} := |u|_{L^2},\quad u,\v \in {\mathrm{H}}.\]
We equip the space $\mathrm{V}$ with the scalar product $\langle u, \v \rangle_\rV := \langle \nabla u, \nabla \v \rangle_{{\mathrm{H}}}$ and norm $\|u\|_{\mathrm{V}}, u, \v \in \mathrm{V}$.

One can show that in the case of $\dom = \T$, $\mathrm{V}$-norm $\|\cdot \|_{\mathrm{V}}$, and $H^1$-norm $\|\cdot\|_{H^1}$ are equivalent on $\mathrm{V}$.\\

We denote by $\mathrm{A}: \mathrm{D}(\mathrm{A}) \rightarrow {\mathrm{H}}$, the Stokes operator which is defined by
\begin{align*}
\mathrm{D}(\mathrm{A}) &= \mathrm{H} \cap H^2(\T),\\
 \mathrm{A} u &= - \Pi\left( \Delta u \right), ~~~u \in \mathrm{D}(\mathrm{A}).
 \end{align*}
$\mathrm{D}(\mathrm{A})$ is a Hilbert space under the graph norm,
\[|u|^2_{\mathrm{D}(\mathrm{A})} := |u|^2_\rH + |\mathrm{A} u|^2_{L^2}.\]

It is well known that $\mathrm{A}$ is a self adjoint positive operator in ${\mathrm{H}}$. Moreover
\[\mathrm{D}(\mathrm{A}^{1/2}) = \mathrm{V} \quad \mbox{and} \quad \langle Au , u \rangle_{\mathrm{H}} = \|u\|^2_{\mathrm{V}} = |\nabla u|^2_{L^2}, \,\,\, u \in \mathrm{D}(\mathrm{A}).\]

We introduce a continuous tri-linear form $b : \mathbf{L}^p \times \mathbf{W}^{1,q} \times \mathbf{L}^r \rightarrow \R$,
\[b(u,\v,w) = \sum_{i, j=1}^2 \int_{\dom} u^i \frac{\partial \v^j}{\partial x^i} w^j~dx, \quad \quad u \in \mathbf{L}(\dom)^p, \v \in \mathbf{W}^{1,q}(\dom), w \in \mathbf{L}^r(\dom)\]
where $p, q, r \in [1, \infty]$ satisfies
\[ \frac1p +\frac1q + \frac1r \leq 1.\]
By the Sobolev Embedding Theorem and the H\"older inequality, we obtain the following estimates
\begin{equation}
\label{eq:2.2}
\begin{split}
|b(u,v,w)| & \le |u|_{L^4}\|\v\|_{\mathrm{V}}|w|_{L^4}, \quad \quad u,w \in \mathbf{L}^4(\dom), \v \in \rV,\\
& \le c\|u\|_{\mathrm{V}}\|\v\|_{\mathrm{V}}\|w\|_{\mathrm{V}}, \quad \quad u,\v,w \in \rV.
\end{split}
\end{equation}

We can define a bilinear map $B : {\mathrm{V}} \times {\mathrm{V}} \rightarrow {\mathrm{V}}^{\prime}$ such that
\[\langle B(u,\v), \phi \rangle = b(u,\v, \phi),~~~~ \text{for}~u, \v, \phi \in {\mathrm{V}},\]
where $\langle \cdot, \cdot \rangle$ denotes the duality between $\rV$ and $\rV^\prime$. The following inequality is well known \cite{[Temam79]}$\colon$
\begin{equation}
\label{eq:2.3}
|b(u,\v,\phi)| \leq \sqrt{2}\, |u|^{\frac12}_{\mathrm{H}}\, \|u\|^{\frac12}_{\mathrm{V}}\,\|\v\|^{\frac12}_{\mathrm{V}}\, |\v|^{\frac12}_{\mathrm{D}(\mathrm{A})}\,|\phi|_{\mathrm{H}}, \quad u \in \mathrm{V}, \v \in \mathrm{D}(\mathrm{A}), \phi \in \mathrm{H}.
\end{equation}

Thus $b$ can be uniquely extended to the tri-linear form (denoted by the same letter)
\[b: \mathrm{V} \times \mathrm{D}(\mathrm{A}) \times \mathrm{H} \to \R.\]
We can now also extend the operator $B$ uniquely to a bounded bilinear operator
\[B : \mathrm{V} \times \mathrm{D}(\mathrm{A}) \to \mathrm{H}.\]

The following properties of the tri-linear map $b$ and the bilinear map $B$ are very well established in \cite{[BDM16], [Temam79]},
\begin{equation}
\label{eq:2.4}
\begin{split}
& b(u,u,u) = 0,~~~\quad \,\, \quad u \in \mathrm{V},\\
& b(u,w,w) = 0,~~~\quad  \quad u \in \mathrm{V}, w \in H^1,\\
& \langle B(u,u), \mathrm{A} u\rangle_{\mathrm{H}} = 0, \quad u \in \mathrm{D}(\mathrm{A}).
\end{split}
\end{equation}
We will also use the following notation, $B(u) := B(u,u)$.\\

The 2D Navier-Stokes equations driven by multiplicative Gaussian noise in the Stratonovich form  are given by:
\begin{align}
\label{eq:2.5}
\begin{cases}
du(x,t)+ \left[(u(x,t) \cdot \nabla ) u(x,t) -\nu \,\Delta u(x,t) + \nabla p(x,t)\right]dt \\
\quad \quad \quad \quad \quad \quad \quad = \sum_{j=1}^m \left[\left(c_j(x)\cdot \nabla \right) u(x,t)\right] \circ dW_j(t),\;\ t >0, \; x \in \dom, \\
\divv u(\cdot,t) = 0, \;\;t>0,\\
u(x,0) = u_0(x),\;\; x \in \dom,\\
\end{cases}
\end{align}
$u \colon [0,\infty) \times \dom \to \mathbb{R}^2$ and $p\colon [0,\infty) \times \dom  \to \mathbb{R}$ are velocity and
pressure of the fluid respectively. $\nu$ is the viscosity of the fluid (with no loss
of generality, $\nu$ will be taken  equal to $1$ for the rest of the article). Here we assume that $c_j$ are divergence free $\R^2$-valued vectors, $W_j$ are $\R-$valued i.i.d. standard Brownian motions and $\circ$ denotes the Stratonovich form. Note that the operators  $\tilde{C}_j$, $j \in \{1, \dots, m\}$, defined by $\tilde{C}_ju:= \left(c_j\cdot \nabla \right) u$, for $u \in \rV$ are skew-symmetric on $L^2(\T, \R^2)$, i.e.  $\tilde{C}_j^\ast = - \tilde{C}_j$, where $\tilde{C}_j^\ast$ denotes the adjoint of $\tilde{C}_j$ on $L^2(\T, \R^2)$.

We will be frequently using the following short-cut notation
\[Cu\circ dW(t) = \sum_{j=1}^m C_j u(t) \circ dW_j(t),\]
where $C_j = \Pi(\tilde{C}_j)$ and $\Pi$ is the Leray-Helmholtz projection operator. \\

With all the notations as defined above, the Navier-Stokes equation \eqref{eq:2.5} projected on divergence free vector field is given by
\begin{align}
\label{eq:2.7}
\begin{cases}
du(t) + \left[\mathrm{A} u(t) + B(u(t))\right]\,dt = Cu(t) \circ dW(t),\\
u(0) = u_0.
\end{cases}
\end{align}

Let us denote the set of divergence free $\mathbb{R}^2$-valued functions with unit $L^2$ norm, as following
\[\mathcal{M} = \left\{ u \in {\mathrm{H}} : |u|_{L^2} = 1\right\}.\]
Then the tangent space at $u$ is defined as,
\[T_u \mathcal{M} = \left\{\v \in {{\mathrm{H}}} : \langle \v, u \rangle_{\mathrm{H}} = 0 \right\},~~~~u \in \mathcal{M}.\]

 We define a linear map $\pi_u : {{\mathrm{H}}} \rightarrow T_u \mathcal{M}$ by
\[ \pi_u(\v) = \v - \langle \v, u \rangle_{\mathrm{H}}\, u, \]
then $\pi_u$ is the orthogonal projection from ${{\mathrm{H}}}$ into $T_u \mathcal{M}$.\\

Since for every $j \in \{1, \dots, m \}$, $C_j^\ast = - C_j$ in $\mathrm{H}$  we infer
\begin{equation}
\label{eq:2.6}
\langle C_ju, u \rangle_{\mathrm{H}} = 0, \quad u \in \mathrm{V},\;\;\;j \in \{1, \dots, m \}.
\end{equation}
In particular, if $u \in \rV \cap \mathcal{M}$, then $C_ju \in T_u\mathcal{M}$ for every $j \in \{1, \dots, m \}$ and hence won't produce any correction terms when projected on the tangent space $T_u \mathcal{M}$, which is shown explicitly below.\\

Let
\[F(u) = \mathrm{A} u + B(u,u) - Cu \circ dW(t)\]
and $\hat{F}(u)$ be the projection of $F(u)$ onto the tangent space $T_u\mathcal{M}$, then
\begin{align*}
\hat{F}(u) &= \pi_u(F(u)) = F(u) - \langle F(u), u \rangle_{\mathrm{H}} \, u \\
&= \mathrm{A} u + B(u) - Cu \circ dW - \langle Au + B(u) - Cu \circ dW, u \rangle_{\mathrm{H}} \, u \\
&= \mathrm{A} u - \langle \mathrm{A} u, u \rangle_{\mathrm{H}} \, u + B(u) - \langle B(u), u \rangle_{\mathrm{H}} \, u  - Cu \circ dW + \langle Cu , u \rangle_{\mathrm{H}} u \circ dW\\
&= \mathrm{A} u - |\nabla u|_{L^2}^2\,u + B(u) - Cu \circ dW.
\end{align*}
The last equality follows from \eqref{eq:2.6} and the identity that $\langle B(u), u\rangle_{\mathrm{H}} = 0$.

Thus by projecting NSEs \eqref{eq:2.7} onto the tangent space $T_u\mathcal{M}$, we obtain the following Stochastic Constrained Navier-Stokes Equations (SCNSEs)
\begin{align}
\label{eq:2.8}
\begin{cases}
du(t) + \left[\mathrm{A} u(t) + B(u(t))\right]dt = |\nabla u(t)|_{L^2}^2u(t)\,dt + Cu(t) \circ dW(t),\\
u(0) = u_0 \in \mathrm{V} \cap \mathcal{M}.
\end{cases}
\end{align}

\section{Stochastic Constrained Navier-Stokes equations}
\label{s:4}

We consider the following stochastic evolution equation
\begin{equation}
\label{eq:4.1}
\begin{cases}
&du(t) + \left[\mathrm{A} u(t) + B(u(t))\right]dt = |\nabla u(t)|_{L^2}^2u(t)\,dt + Cu(t) \circ dW(t), \quad t \in [0,T],\\
&u(0) = u_0,
\end{cases}
\end{equation}
where $Cu(t,x) \circ dW(t) := \sum_{j=1}^m C_j u(t,x) \circ dW_j(t)$ with   $C_j u =  \Pi\left( (c_j \cdot \nabla ) u\right)$ and
$W_j, j= 1,\dots, m,$ are  i.i.d standard $\R-$valued Brownian Motions.\\

From now on we will assume that $c_j$ are constant vector fields. Whether our results are true in a more general setting is an open problem.\\
\noindent \textbf{Assumptions.} We assume that

\begin{trivlist}
\item{(A.1)} Vectors $c_1, \ldots, c_m$ belong to $\R^2$ such that $K_c^2 < 1$, where
\begin{equation}
\label{eqn-K_c}
K_c:=\mathrm{max}_{j \in \{1, \cdots, m\}} |c_j|_{\R^2}\,,
\end{equation}
$|\cdot|_{\R^2}$ is the Euclidean norm in $\R^2$.

\item{(A.2)} $u_0 \in \mathrm{V} \cap \mathcal{M}$.
\end{trivlist}

\begin{definition}
\label{defn_stoc_basis}
A stochastic basis $(\Omega, \mathcal{F}, \mathbb{F}, \mathbb{P})$ is a probability space equipped with the filtration $\mathbb{F} = \{\mathcal{F}_t\}_{t \ge 0}$ of its $\sigma-$field $\mathcal{F}$.
\end{definition}

\begin{definition}
\label{defn4.2}
We say that problem \eqref{eq:4.1} has a \textbf{strong solution} iff for every stochastic basis $(\Omega, \mathcal{F}, \mathbb{F}, \mathbb{P})$ and every $\R^m-$ valued $\mathbb{F}-$Wiener process $W = \left(W(t)\right)_{t \ge 0}$, there exists a $\mathbb{F}-$progressively measurable process $u : [0,T] \times {\Omega} \to \mathrm{D(\mathrm{A})}$ with ${\mathbb{P}}$-a.e. paths
\[u(\cdot, \omega) \in \ccal([0,T]; \rV) \cap L^2(0,T; \mathrm{D}(\mathrm{A})),\]
such that for all $t \in [0,T]$ and all $\v \in \rV$ ${\mathbb{P}}$-a.s.
\begin{equation}
\label{eq:4.2}
\begin{split}
&\langle u(t), \v \rangle - \langle u_0, \v \rangle + \int_0^t \langle \mathrm{A} u(s), \v \rangle\,ds + \int_0^t \langle B(u(s)), \v \rangle\,ds\\
&=  \int_0^t |\nabla u(s)|_{L^2}^2 \langle u(s), \v \rangle\,ds + \frac12\int_0^t \sum_{j=1}^m \langle C_j^2 u(s), \v \rangle\,ds + \int_0^t \sum_{j=1}^m  \langle C_ju(s),\v\rangle\,d\hat{W}_j(s).
\end{split}
\end{equation}
\end{definition}

\begin{definition}
\label{defn4.1}
We say that there exists a \textbf{martingale solution} of \eqref{eq:4.1} iff there exist
\begin{itemize}
\item a stochastic basis $(\hat{\Omega}, \hat{\mathcal{F}}, \hat{\mathbb{F}}, \hat{\mathbb{P}})$,

\item an $\R^m-$valued $\hat{\mathbb{F}}-$Wiener process $\hat{W}$,

\item and a $\hat{\mathbb{F}}-$progressively measurable process $u : [0,T] \times \hat{\Omega} \to \mathrm{D(\mathrm{A})}$ with $\hat{\mathbb{P}}$-a.e. paths
\[u(\cdot, \omega) \in \ccal([0,T]; \rV_{\mathrm{w}}) \cap L^2(0,T; \mathrm{D}(\mathrm{A})),\]
such that for all $t \in [0,T]$ and all $\v \in \rV$
the identity \eqref{eq:4.2} holds $\hat{\mathbb{P}}$-a.s.
\end{itemize}
\end{definition}

Next we state some important results of this paper which will be proved in further sections.

\begin{theorem}
\label{thm5.1}
Let assumptions $(A.1)-(A.2)$ be satisfied. Then there exists a martingale solution $(\hat{\Omega}, \hat{\mathcal{F}}, \hat{\mathbb{F}}, \hat{\mathbb{P}}, \hat{W}, u)$ of problem \eqref{eq:4.1} such that
\begin{equation}
\label{eq:5.1}
\hat{\E} \left[\sup_{t \in [0,T]}\|u(t)\|^2_{\mathrm{V}} + \int_0^T |u(t)|^2_{\mathrm{D}(\mathrm{A})}\,dt \right] < \infty.
\end{equation}
\end{theorem}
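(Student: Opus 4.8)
The plan is to construct the martingale solution as the limit of Faedo--Galerkin approximations, following the classical compactness-and-tightness scheme for stochastic Navier--Stokes equations. First I would rewrite the Stratonovich problem \eqref{eq:4.1} in its equivalent It\^o form, producing the correction term $\tfrac12\sum_{j=1}^m C_j^2u$ that already appears in the weak formulation \eqref{eq:4.2}; since each $\tilde C_j$ (hence $C_j$) is skew-symmetric, this correction is benign and, by \eqref{eq:2.6}, does not disturb the constraint. I would then consider the finite-dimensional system \eqref{eq:1.3} on $P_n\rH$. Its truncated drift and diffusion coefficients are locally Lipschitz, and the normalisation $u_n(0)=P_nu_0/|P_nu_0|$ together with the projection onto the tangent space $T_{u}\mathcal{M}$ and the skew-symmetry of $C_j$ keeps $u_n$ on the sphere $\mathcal{M}$; combined with the a priori bounds below this yields global existence of each $u_n$.

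The heart of the argument is the two uniform a priori estimates stated in the introduction,
\[\sup_n \E\Big[\int_0^T |u_n(s)|^2_{\mathrm{D}(\mathrm{A})}\,ds\Big]<\infty, \qquad \sup_n \E\Big(\sup_{0\le s\le T}\|u_n(s)\|^{2p}_{\rV}\Big)<\infty,\]
for $p\in[1,1+\tfrac1{K_c^2})$. To obtain them I would apply the It\^o formula to $\|u_n(t)\|^2_{\rV}$ (equivalently, pair the equation with $\mathrm{A}u_n$). Two structural facts are essential. First, $\langle B(u),\mathrm{A}u\rangle_{\rH}=0$ from \eqref{eq:2.4}, which holds precisely because we are on the torus $\T$; this is the reason the global result is restricted to periodic boundary conditions. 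Second, since $|u_n|_{L^2}=1$ on $\mathcal{M}$, one has $\|u_n\|_{\rV}^2=\langle\mathrm{A}u_n,u_n\rangle_{\rH}\le|\mathrm{A}u_n|_{L^2}$, so the constraint term contributes $+\|u_n\|_{\rV}^4$, which is dominated by the viscous dissipation $-|\mathrm{A}u_n|^2_{L^2}$. This is exactly the mechanism ruling out blow-up in the deterministic case. It then remains to control the noise: its It\^o correction and quadratic variation contribute terms bounded by $K_c^2$ times powers of $\|u_n\|_{\rV}$, and assumption (A.1), $K_c^2<1$, ensures the net balance stays dissipative, so a Gronwall plus Burkholder--Davis--Gundy argument closes the estimate up to the threshold exponent $1+1/K_c^2$.

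With these estimates I would prove tightness of $\{\mathrm{Law}(u_n)\}$ on $\mathcal{Z}_T=\ccal([0,T];\rH)\cap L^2_{\mathrm w}(0,T;\mathrm{D}(\mathrm{A}))\cap L^2(0,T;\rV)\cap \ccal([0,T];\rV_{\mathrm w})$. The $L^2_{\mathrm w}(0,T;\mathrm{D}(\mathrm{A}))$ and $\ccal([0,T];\rV_{\mathrm w})$ components follow directly from the uniform bounds, while tightness in $\ccal([0,T];\rH)$ and $L^2(0,T;\rV)$ requires the Aldous condition (Definition~\ref{defn3.4}): I would estimate $\hat\E|u_n(\tau+\theta)-u_n(\tau)|_{\rH}$ for stopping times $\tau$ by treating the equation term by term, controlling the drift by the a priori estimates and the martingale part by BDG. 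Applying the Jakubowski--Skorokhod theorem yields a new stochastic basis $(\ho,\hf,\hat{\mathbb{F}},\hp)$ and copies $\tilde u_n\to\tilde u$ converging $\hp$-a.s. in $\mathcal{Z}_T$.

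Finally I would pass to the limit in the weak form \eqref{eq:4.2}. The linear and dissipative terms pass via the weak convergences; the delicate steps, which I expect to be the main obstacle, are the nonlinearity $B(u_n)$ and the constraint term $|\nabla u_n|^2_{L^2}u_n$. For both I would use the strong convergence in $L^2(0,T;\rV)$ (hence $\nabla\tilde u_n\to\nabla\tilde u$ strongly in $L^2$) and in $\ccal([0,T];\rH)$, together with the uniform higher moments to secure the uniform integrability needed to take expectations. The stochastic integral is identified by the standard martingale-representation argument, producing the limiting Wiener process $\hat W$. The energy bound \eqref{eq:5.1} then follows from the uniform a priori estimates, transferred to $\tilde u_n$ (equal in law), by weak lower semicontinuity of the $\rV$- and $\mathrm{D}(\mathrm{A})$-norms and Fatou's lemma. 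The recurring difficulty throughout is keeping the destabilising constraint term and the noise compatible with the coercivity required to close the estimates, which is precisely the role of the threshold $K_c^2<1$.
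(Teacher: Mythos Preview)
Your proposal is correct and follows essentially the same route as the paper: Galerkin approximation, invariance of $\mathcal{M}$, a priori $\rV$- and $\rD(\rA)$-estimates via It\^o's formula for $\|u_n\|_\rV^2$ and its $p$-th powers, Aldous-based tightness on $\mathcal{Z}_T$, Jakubowski--Skorokhod, term-by-term passage to the limit, and martingale representation.

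One point where your sketch is imprecise and the paper does something specific: the noise contribution to $d\|u_n\|_\rV^2$ is not merely ``bounded by $K_c^2$ times powers of $\|u_n\|_\rV$''. The Stratonovich correction and the second-order It\^o term combine to $\sum_j\langle (\rA C_j - C_j\rA)u_n,\, C_ju_n\rangle_\rH$, and the paper kills this \emph{exactly} via the commutation identity $[\rA,C_j]=0$ (Lemma~\ref{lemma5.4}), which holds precisely because the $c_j$ are assumed to be \emph{constant} vectors. For non-constant $c_j$ the commutator $[\rA,C_j]$ is second order, the resulting term involves $|u_n|_{\rD(\rA)}$, and no Gronwall argument would absorb it; this is why the paper isolates the constant-$c_j$ hypothesis and declares the general case open. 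The factor $K_c^2$ and the threshold $p<1+1/K_c^2$ enter only at the next stage, through the quadratic-variation term when applying It\^o to $(\|u_n\|_\rV^2)^p$, and there too the paper closes the estimate directly, without Gronwall.
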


\begin{remark}
\label{rem3.5}
The solution obtained in the above theorem is weak in probabilistic sense and strong in PDE sense.
\end{remark}

The next lemma shows that almost all the trajectories of the solution obtained in Theorem~\ref{thm5.1} are almost everywhere equal to a continuous $\rV$-valued function defined on $[0,T]$.

\begin{lemma}
\label{lemma6.1}
Assume that the assumptions $(A.1)-(A.2)$ are satisfied. Let $(\ho, \hat{\mathcal{F}}, \hat{\mathbb{F}}, \hp, \hat{W}, u)$ be a martingale solution of \eqref{eq:4.1} such that
\begin{equation}
\label{eq:6.1}
\hat{\E} \left[ \sup_{t \in [0,T] }\|u(t)\|^2_\rV + \int_0^T |u(s)|^2_{\rD(\mathrm{A})}\,ds \right] < \infty.
\end{equation}
Then for $\hp$ almost all $\omega \in \hat{\Omega}$ the trajectory $u(\cdot, \omega)$ is almost everywhere equal to a continuous $\rV-$valued function defined on $[0,T]$. Moreover for every $t \in [0,T], \hp-$a.s.
\begin{align}
\label{eq:6.2}
u(t) & = u_0 - \int_0^t \left[ \mathrm{A} u(s) + B(u(s)) - |\nabla u(s)|_{L^2}^2\,u(s) \right]\,ds \nonumber \\
&~~~+ \frac12\int_0^t \sum_{j=1}^m C^2_j u(s)\,ds + \int_0^t \sum_{j=1}^m C_ju(s)\,d\hat{W}(s).
\end{align}
\end{lemma}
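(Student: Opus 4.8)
The plan is to exploit the extra regularity $u(\cdot,\omega)\in L^2(0,T;\mathrm{D}(\mathrm{A}))$ carried by the martingale solution to recast the weak identity \eqref{eq:4.2} as an $\mathrm{H}$-valued equation, and then to run an It\^o formula for the square of the $\rV$-norm $\|u(t)\|_{\rV}^2=|\mathrm{A}^{1/2}u(t)|_{\mathrm{H}}^2$. First I would check that every term on the right-hand side of \eqref{eq:6.2} is a genuine $\mathrm{H}$-valued integrand. From $u\in L^2(0,T;\mathrm{D}(\mathrm{A}))$ we get $\mathrm{A}u\in L^2(0,T;\mathrm{H})$; the interpolation estimate \eqref{eq:2.3} gives $|B(u,u)|_{\mathrm{H}}\le\sqrt{2}\,|u|_{\mathrm{H}}^{1/2}\|u\|_{\rV}\,|u|_{\mathrm{D}(\mathrm{A})}^{1/2}$, so together with $\sup_t\|u(t)\|_{\rV}<\infty$ and $\int_0^T|u(s)|_{\mathrm{D}(\mathrm{A})}\,ds<\infty$ from \eqref{eq:6.1} one obtains $B(u)\in L^2(0,T;\mathrm{H})$; the term $|\nabla u|_{L^2}^2u$ lies in $L^\infty(0,T;\rV)$; and since each $C_j$ is first order, $C_j\colon\mathrm{D}(\mathrm{A})\to\rV$ and $C_j^2\colon\mathrm{D}(\mathrm{A})\to\mathrm{H}$ are bounded, so $C_j^2u\in L^2(0,T;\mathrm{H})$ while the diffusion coefficient satisfies $C_ju\in L^2(0,T;\rV)$. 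Writing $f:=-\mathrm{A}u-B(u)+|\nabla u|_{L^2}^2u+\tfrac12\sum_{j=1}^mC_j^2u\in L^2(0,T;\mathrm{H})$, all the duality pairings in \eqref{eq:4.2} become $\mathrm{H}$-inner products, and since \eqref{eq:4.2} holds for all $\v\in\rV$ and $\rV$ is dense in $\mathrm{H}$, this promotes \eqref{eq:4.2} to the $\mathrm{H}$-valued identity \eqref{eq:6.2}, valid for almost every $t$, $\hp$-a.s.

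Next I would apply $\mathrm{A}^{1/2}$ to \eqref{eq:6.2}, i.e. pass to $v:=\mathrm{A}^{1/2}u$, which places the equation in the standard Gelfand triple $\rV\hookrightarrow\mathrm{H}\hookrightarrow\rV'$: the integrability above gives $v\in L^2(0,T;\rV)$, the drift $\mathrm{A}^{1/2}f$ lies in $L^2(0,T;\rV')$ (because $\mathrm{A}^{1/2}$ maps $\mathrm{H}$ continuously into $\rV'$ by self-adjointness), and the diffusion $\mathrm{A}^{1/2}C_ju$ lies in $L^2(0,T;\mathrm{H})$. I would then invoke the It\^o formula for $|v(t)|_{\mathrm{H}}^2$ in this triple (the It\^o lemma in a Gelfand triple, as in Pardoux and in Krylov--Rozovskii). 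Its two outputs are exactly what is needed: first, the explicit expression
\[
\|u(t)\|_{\rV}^2=\|u_0\|_{\rV}^2+2\int_0^t\langle f(s),\mathrm{A}u(s)\rangle_{\mathrm{H}}\,ds+\sum_{j=1}^m\int_0^t\|C_ju(s)\|_{\rV}^2\,ds+2\sum_{j=1}^m\int_0^t\langle C_ju(s),u(s)\rangle_{\rV}\,d\hat{W}_j(s);
\]
and second, the assertion that $t\mapsto v(t)$ admits a continuous $\mathrm{H}$-valued modification, that is, $t\mapsto\|u(t)\|_{\rV}$ is $\hp$-a.s. continuous.

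I would then upgrade weak to strong continuity. The martingale solution already has paths in $\ccal([0,T];\rV_{\mathrm{w}})$, so $u(\cdot,\omega)$ is weakly continuous in $\rV$; combined with the continuity of $t\mapsto\|u(t)\|_{\rV}$ just obtained, and the Hilbert-space fact that weak convergence together with convergence of norms implies strong convergence, it follows that $u(\cdot,\omega)$ coincides almost everywhere with a function in $\ccal([0,T];\rV)$ for $\hp$-a.e.\ $\omega$. Finally, after this identification both sides of \eqref{eq:6.2} are continuous in $t$ (the Bochner integrals have continuous versions since their integrands lie in $L^1(0,T;\mathrm{H})$, and the stochastic term is a continuous $\rV$-valued martingale because $C_ju\in L^2(0,T;\rV)$), so the equality, valid for a.e.\ $t$, extends to every $t\in[0,T]$, $\hp$-a.s.

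The step I expect to be the main obstacle is the rigorous application of the It\^o formula in the second paragraph. One must verify that, after applying $\mathrm{A}^{1/2}$, the drift truly belongs to $L^2(0,T;\rV')$ and the diffusion to $L^2(0,T;\mathrm{H})$ with the relevant integrals finite $\hp$-a.s.; the delicate term is the nonlinearity, where \eqref{eq:2.3} must be combined with the a priori bound \eqref{eq:6.1} (and H\"older in time, using $|u|_{\mathrm{H}}=1$ on $\mathcal{M}$) to guarantee $\int_0^T|B(u(s))|_{\mathrm{H}}^2\,ds<\infty$. Some care is also needed to justify commuting $\mathrm{A}^{1/2}$ through the stochastic integral before quoting the It\^o lemma, which is legitimate since $\mathrm{A}^{1/2}$ is closed and the integrands lie in its domain with square-integrable images.
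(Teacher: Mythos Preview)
Your proposal is correct and follows essentially the same strategy as the paper: first check that every term on the right-hand side of \eqref{eq:6.2} is a genuine $\mathrm{H}$-valued (or $\rV$-valued, for the stochastic integral) process with the right integrability, promote the weak identity \eqref{eq:4.2} to the $\mathrm{H}$-valued equation \eqref{eq:6.2} by density, and then invoke Pardoux's It\^o lemma in a Gelfand triple to obtain the continuous $\rV$-valued modification.

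The only difference is in the choice of triple. The paper works directly in $\mathrm{D}(\mathrm{A})\subset\rV\subset\mathrm{H}$: with $u\in L^2(0,T;\mathrm{D}(\mathrm{A}))$, drift $g\in L^2(0,T;\mathrm{H})$ and noise coefficient in $L^2(0,T;\rV)$, Pardoux's lemma immediately yields $u\in\ccal([0,T];\rV)$. You instead apply $\mathrm{A}^{1/2}$ to shift to the triple $\rV\subset\mathrm{H}\subset\rV'$; this is isomorphic to the paper's choice, and the conclusion that $v=\mathrm{A}^{1/2}u$ has a continuous $\mathrm{H}$-valued modification already gives $u\in\ccal([0,T];\rV)$ directly, since $\mathrm{A}^{1/2}\colon\rV\to\mathrm{H}$ is an isomorphism. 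Your subsequent ``weak continuity plus norm continuity implies strong continuity'' step is therefore correct but redundant.
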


\begin{definition}
\label{defn6.2}
Let $(\Omega, \mathcal{F}, \mathbb{F}, \mathbb{P}, W, u^i)$, $i = 1,2$ be the martingale solutions of \eqref{eq:4.1} with $u^i(0) = u_0$, $i= 1,2$. Then we say that the solutions are \textbf{pathwise unique} if $\mathbb{P}-$a.s. for all $t \in [0,T]$, $u^1(t) = u^2(t)$.
\end{definition}
In Lemma \ref{lemma6.3} we will show that the pathwise uniqueness property for our problem holds. This will enable us to deduce
the following theorem that summarises the main result of our paper$\colon$
\begin{theorem}
\label{thm6.6}
For every $u_0 \in \rV$ there exists a pathwise unique strong solution $u$ of stochastic constrained Navier-Stokes equation \eqref{eq:4.1} such that
\begin{equation}
\label{eq:6.8}
\E \left[ \int_0^T |u(t)|^2_{\mathrm{D}(\mathrm{A})}\,dt + \sup_{t \in [0,T]} \|u(t)\|^2_\rV \right] < \infty.
\end{equation}
\end{theorem}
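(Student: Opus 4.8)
The plan is to deduce Theorem~\ref{thm6.6} from the abstract Yamada--Watanabe principle of Ondrej\'{a}t \cite{[Ondrejat04]}, which asserts that for a stochastic evolution equation the existence of a martingale (weak) solution together with pathwise uniqueness yields the existence of a probabilistically strong solution that is unique in law. All three ingredients are already assembled in the excerpt: Theorem~\ref{thm5.1} supplies a martingale solution of \eqref{eq:4.1} satisfying the a'priori bound \eqref{eq:5.1}; Lemma~\ref{lemma6.1} upgrades the trajectories of that solution to continuous $\rV$-valued paths and records the It\^o form \eqref{eq:6.2}; and Lemma~\ref{lemma6.3} establishes pathwise uniqueness in the sense of Definition~\ref{defn6.2}. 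So the proof is essentially a matter of checking that our setting fits the abstract hypotheses and then quoting the theorem.

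First I would cast \eqref{eq:4.1} into the framework required by \cite{[Ondrejat04]}: fix the state space $\rV$ together with the scale $\rD(\mathrm{A}) \embed \rV \embed \rH$, and verify that the drift $u \mapsto \mathrm{A}u + B(u) - |\nabla u|_{L^2}^2 u - \frac12\sum_{j=1}^m C_j^2 u$ and the diffusion $u \mapsto (C_j u)_{j=1}^m$ are measurable maps into the appropriate target spaces, using the estimates \eqref{eq:2.2}, \eqref{eq:2.3} and the skew-symmetry \eqref{eq:2.6}. The It\^o reformulation \eqref{eq:6.2}, in which the Stratonovich correction $\frac12\sum_{j=1}^m C_j^2 u$ is already made explicit, is exactly the form to which the abstract result applies, and one checks that a solution in the sense of Definition~\ref{defn4.2} coincides with an abstract solution in Ondrej\'{a}t's sense.

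Next I would invoke the abstract theorem directly. Combining the weak existence from Theorem~\ref{thm5.1} with the pathwise uniqueness from Lemma~\ref{lemma6.3}, the result of \cite{[Ondrejat04]} produces, on any prescribed stochastic basis $(\Omega, \mathcal{F}, \mathbb{F}, \mathbb{P})$ carrying an $\R^m$-valued $\mathbb{F}$-Wiener process $W$, an $\mathbb{F}$-progressively measurable process $u$ satisfying \eqref{eq:4.2}; this is precisely a strong solution in the sense of Definition~\ref{defn4.2}, and Lemma~\ref{lemma6.3} forces it to be the pathwise unique one. Finally, since the strong solution shares its law with the martingale solution of Theorem~\ref{thm5.1}, the energy bound \eqref{eq:6.8} is inherited verbatim from \eqref{eq:5.1}, while the path continuity in $\rV$ and square-integrability in $\rD(\mathrm{A})$ come from Lemma~\ref{lemma6.1}.

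The hard part will be the verification that the equation genuinely satisfies the hypotheses of the abstract Yamada--Watanabe theorem, in particular the measurability requirements and the matching of the two solution concepts, since the manifold constraint $\mathcal{M}$ and the quadratic term $|\nabla u|_{L^2}^2 u$ push \eqref{eq:4.1} slightly outside the most standard monotone/coercive templates. Once this bookkeeping is in place, no further analytic estimates are needed: the theorem follows simply by assembling Theorem~\ref{thm5.1}, Lemma~\ref{lemma6.1} and Lemma~\ref{lemma6.3} through the abstract principle.
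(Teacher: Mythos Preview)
Your proposal is correct and matches the paper's own argument essentially verbatim: the paper proves Theorem~\ref{thm6.6} by combining Theorem~\ref{thm5.1} (weak existence), Lemma~\ref{lemma6.3} (pathwise uniqueness), and Lemma~\ref{lemma6.1} (path regularity) via Ondrej\'{a}t's Yamada--Watanabe theorem, packaged through Corollary~\ref{cor6.5}. Your additional remarks about verifying the abstract hypotheses are more explicit than what the paper writes, but the route is identical.
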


\begin{remark}
\label{rem3.9}
The solution of \eqref{eq:4.1} obtained in previous theorem is strong in both probabilistic and PDE sense.
\end{remark}

\section{Compactness}
\label{s:3}
Let us consider the following functional spaces:

\noindent $\ccal([0,T]; \mathrm{H}) :=$ the space of continuous functions $u: [0,T] \to \mathrm{H}$ with the topology $\mathcal{T}_1$ induced by the norm $|u|_{\ccal([0,T]; \mathrm{H})} := \sup_{t \in [0,T]}|u(t)|_{\mathrm{H}}$,\\

\noindent $L^2_{\mathrm{w}}(0,T; \mathrm{D}(\mathrm{A})) :=$ the space $L^2(0,T; \mathrm{D}(\mathrm{A}))$ with the weak topology $\mathcal{T}_2$,\\

\noindent $L^2(0,T; \mathrm{V}) :=$ the space of measurable functions $u : [0,T] \to \mathrm{V}$ such that
\[|u|_{L^2(0,T; \mathrm{V})} = \left(\int_0^T \|u(t)\|^2_\rV \,dt \right)^{\frac12} < \infty,\]
with the topology $\mathcal{T}_3$ induced by the norm $|u|_{L^2(0,T; \mathrm{V})}$.\\
\noindent Let $\rV_{\mathrm{w}}$ denote the Hilbert space $\rV$ endowed with the weak topology.\\
\noindent $\ccal([0,T]; \mathrm{V}_{\mathrm{w}}) :=$ the space of weakly continuous functions $u: [0,T] \to \mathrm{V}$ endowed with the weakest topology $\mathcal{T}_4$ such that for all $h \in \mathrm{V}$ the mappings
\[\ccal([0,T]; \mathrm{V}_{\mathrm{w}}) \ni u \to \langle u(\cdot), h \rangle_{\mathrm{V}} \in \ccal([0,T]; \R)\]
are continuous. In particular, $u_n \to u$ in $\ccal([0,T]; \rV_{\mathrm{w}})$ iff for all $h \in \rV \colon$
\[\lim_{n \to \infty} \sup_{t \in [0,T]} \left|\langle u_n(t) - u(t), h \rangle_\rV \right| = 0.\]

Consider the ball
\[\mathbb{B} := \{ x \in \rV : \|x\|_{\rV} \le r \}.\]

Let $q$ be the metric compatible with the weak topology on $\mathbb{B}$. Let us consider the following subspace of the space $\ccal([0,T]; \rV_{\mathrm{w}})$
\begin{align}
\label{eq:5.4.1}
\ccal([0,T]; \mathbb{B}_{\mathrm{w}}) =\,\, & \text{the space of weakly continuous functions } \; u \colon [0,T] \to \rV\nonumber \\
& \text{such that } \sup_{t \in [0,T]}\|u(t)\|_\rV \le r.
\end{align}
The space $\ccal([0,T]; \mathbb{B}_{\mathrm{w}})$ is metrizable (see \cite{[Brezis83], [BM13]}) with metric
\begin{equation}
\label{eq:5.4.2}
\varrho(u,\v) = \sup_{t \in [0,T]} q(u(t),\v(t)).
\end{equation}

Since by the Banach-Alaoglu theorem $\mathbb{B}_{\mathrm{w}}$ is compact, $(\ccal([0,T]; \mathbb{B}_{\mathrm{w}}), \varrho)$ is a complete metric space.

The following lemma \cite[Lemma~2.1]{[BM14]} says that any sequence $(u_n)_{n \in \N} \subset \ccal([0,T]; \mathbb{B})$ convergent in $\ccal([0,T]; \rH)$ is also convergent in the space $\ccal([0,T]; \mathbb{B}_{\mathrm{w}})$.

\begin{lemma}
\label{lemma5.4.1}
Let $u_n \colon [0,T] \to \rV, n \in \N$ be functions such that
\begin{itemize}
\item[(i)] $\sup_{n \in \N} \sup_{s \in [0,T]} \|u_n(s)\|_\rV \le r$,

\item[(ii)] $u_n \to u$ in $\ccal([0,T]; \rH)$.
\end{itemize}
Then $u,u_n \in \ccal([0,T]; \mathbb{B}_{\mathrm{w}})$ and $u_n \to u$ in $\ccal ([0,T]; \mathbb{B}_{\mathrm{w}})$ as $n \to \infty$.
\end{lemma}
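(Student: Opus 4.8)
The plan is to reduce everything to one topological fact: on the ball $\mathbb{B}$, which is bounded in $\rV$, the weak topology of $\rV$ coincides with the restriction of the weak topology of $\rH$. Recall from above that $\mathbb{B}_{\mathrm{w}}$ is weakly compact (Banach--Alaoglu) and that $q$ metrizes its weak topology, so $q$-convergence on $\mathbb{B}$ is exactly weak-$\rV$ convergence. I would first prove the \emph{key claim}: for a sequence $(x_k) \subset \mathbb{B}$ and $x \in \mathbb{B}$ one has $q(x_k,x) \to 0$ if and only if $\langle x_k, g\rangle_\rH \to \langle x, g\rangle_\rH$ for every $g \in \rH$. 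The direction ``weak-$\rV$ $\Rightarrow$ weak-$\rH$'' is immediate, since $\rV \embed \rH$ is continuous and hence each functional $v \mapsto \langle v, g\rangle_\rH$ is $\|\cdot\|_\rV$-continuous, thus weakly continuous. For the converse I would use reflexivity of $\rV$: any subsequence of $(x_k)$ has a further subsequence converging weakly in $\rV$ to some $y \in \mathbb{B}$; this limit also converges weakly in $\rH$, so by uniqueness of weak-$\rH$ limits $y = x$, and a subsequence-of-subsequence argument gives $x_k \rightharpoonup x$ in $\rV$.

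With the claim in hand the membership statements follow quickly. Each $u_n$ takes values in $\mathbb{B}$ by hypothesis (i) and is continuous from $[0,T]$ into $\rH$ (norm topology), hence weakly-$\rH$ continuous; the claim upgrades this to continuity into $\mathbb{B}_{\mathrm{w}}$, i.e. $u_n \in \ccal([0,T]; \mathbb{B}_{\mathrm{w}})$. For the limit $u$ I would first check $u(t) \in \mathbb{B}$ for each $t$: by (i) the sequence $(u_n(t))_n$ is bounded in $\rV$, hence has a weak-$\rV$ limit point $z \in \mathbb{B}$; since $u_n(t) \to u(t)$ in $\rH$ by (ii), necessarily $z = u(t)$, so $\|u(t)\|_\rV \le r$. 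Moreover $u$ is a uniform $\rH$-limit of the continuous maps $u_n$, hence continuous into $\rH$, and the claim again yields $u \in \ccal([0,T]; \mathbb{B}_{\mathrm{w}})$.

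The main work is the uniform convergence $\varrho(u_n,u) = \sup_{t \in [0,T]} q(u_n(t), u(t)) \to 0$. I would argue by contradiction: if it fails there exist $\eps > 0$, a subsequence (not relabelled), and times $t_n \in [0,T]$ with $q(u_n(t_n), u(t_n)) \ge \eps$. By compactness of $[0,T]$ I may assume $t_n \to t_\ast$. For each fixed $g \in \rH$ I would split
\[
\langle u_n(t_n) - u(t_\ast), g\rangle_\rH = \langle u_n(t_n) - u(t_n), g\rangle_\rH + \langle u(t_n) - u(t_\ast), g\rangle_\rH,
\]
bounding the first term by $\sup_{t}|u_n(t) - u(t)|_\rH\,|g|_\rH \to 0$ via (ii) and the second by the (strong, hence weak) $\rH$-continuity of $u$ at $t_\ast$. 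Thus $u_n(t_n) \rightharpoonup u(t_\ast)$ weakly in $\rH$; as all these points lie in $\mathbb{B}$, the claim gives $q(u_n(t_n), u(t_\ast)) \to 0$. Combined with $q(u(t_n), u(t_\ast)) \to 0$ (weak continuity of $u$) and the triangle inequality for $q$, this forces $q(u_n(t_n), u(t_n)) \to 0$, contradicting $\ge \eps$.

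The hard part is precisely this last step: because the time $t_n$ and the index $n$ move together, pointwise convergence is not enough, and the uniform estimate in (ii) is exactly what glues the ``diagonal'' evaluation $u_n(t_n)$ to $u(t_\ast)$. Everything else is soft functional analysis resting on the weak-$\rV$/weak-$\rH$ equivalence on bounded sets; note that no compactness of $\rV \embed \rH$ is used here (that enters the tightness arguments elsewhere), only its continuity together with reflexivity of $\rV$.
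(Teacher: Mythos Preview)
The paper does not actually prove this lemma; it merely quotes it as \cite[Lemma~2.1]{[BM14]} and uses the result. So there is no ``paper's own proof'' to compare against. That said, your argument is correct and is essentially the standard one: the crucial observation is that on the $\rV$-bounded ball $\mathbb{B}$ the weak topologies of $\rV$ and $\rH$ coincide (your ``key claim''), which you establish cleanly via reflexivity and a subsequence-of-subsequence argument. The contradiction step for the uniform convergence is also handled correctly --- the uniform $\rH$-convergence in (ii) is precisely what lets you control the diagonal evaluation $u_n(t_n)$, and you use it in the right place. One minor remark: when you say ``$\|\cdot\|_\rV$-continuous, thus weakly continuous'', it might be worth spelling out that a norm-continuous \emph{linear} functional is automatically weakly continuous; this is of course exactly what you mean, but the word ``linear'' is doing work there.
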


Let
\begin{equation}
\label{eq:3.1}
\mathcal{Z}_T = \ccal([0,T]; \mathrm{H}) \cap L^2_{\mathrm{w}}(0,T; \mathrm{D}(\mathrm{A})) \cap L^2(0,T; \mathrm{V}) \cap \ccal(0,T; \mathrm{V}_{\mathrm{w}}),
\end{equation}
and let $\mathcal{T}$ be the supremum of the corresponding topologies.\\

Now we formulate the compactness criterion analogous to the result due to Mikulevicus and Rozowskii \cite{[MR05]}, Brze\'zniak and Motyl \cite{[BM14]} for the space $\mathcal{Z}_T$ .

\begin{lemma}
\label{lemma5.4.2}
Let $\mathcal{Z}_T$, $\mathcal{T}$ be as defined in \eqref{eq:3.1}. Then a set $\kcal \subset \mathcal{Z}_T$ is $\mathcal{T}-$relatively compact if the following three conditions hold
\begin{itemize}
\item[(\rm{a})] $\sup_{u \in \kcal} \sup_{s \in [0,T]} \|u(s)\|_\rV < \infty\,,$
\item[(\rm{b})] $\sup_{u \in \kcal} \int_0^T |u(s)|^2_{\mathrm{D}(\rA)}\,ds < \infty\,$, i.e. $\kcal$ is bounded in $L^2(0,T; \mathrm{D}(\rA))$,
\item[(\rm{c})] $\lim_{\delta \to 0} \sup_{u \in \kcal} \sup_{\underset{|t-s| \le \delta}{s,t \in [0,T]}}|u(t) - u(s)|_\rH = 0\,.$
\end{itemize}
\end{lemma}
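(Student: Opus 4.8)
The plan is to exploit the fact that $\mathcal{T}$ is the supremum of the four component topologies $\mathcal{T}_1,\dots,\mathcal{T}_4$, so that it suffices to show that from an arbitrary sequence $(u_n)_{n\in\N}\subset\kcal$ one can extract a \emph{single} subsequence converging, to one and the same limit $u$, in each of the four topologies. Since conditions (a)--(c) confine $\kcal$ to a bounded set on which all four topologies are metrizable (the norm topologies $\mathcal{T}_1,\mathcal{T}_3$, the weak topology $\mathcal{T}_2$ on a bounded set of the separable reflexive space $L^2(0,T;\mathrm{D}(\mathrm{A}))$, and $\mathcal{T}_4$ via the metric $\varrho$ on $\ccal([0,T];\mathbb{B}_{\mathrm{w}})$), this sequential formulation of relative compactness is legitimate.

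First I would settle $\mathcal{T}_1$ by the Arzel\`a--Ascoli theorem. Condition (c) supplies a uniform modulus of continuity in $\rH$, hence equicontinuity of the family $\{t\mapsto u(t):u\in\kcal\}$; condition (a) bounds each $u(t)$ in $\rV$, and since $\rV\embed\rH$ compactly (Rellich--Kondrachov on $\T$), the slice $\{u(t):u\in\kcal\}$ is relatively compact in $\rH$ for every fixed $t$. Arzel\`a--Ascoli then yields a subsequence, still denoted $(u_n)$, with $u_n\to u$ in $\ccal([0,T];\rH)$. Next, condition (b) makes $\kcal$ bounded in the reflexive Hilbert space $L^2(0,T;\mathrm{D}(\mathrm{A}))$, so by Banach--Alaoglu a further subsequence converges weakly there; both this weak limit and the strong $\ccal([0,T];\rH)$-limit are compatible in the coarser space $L^2_{\mathrm{w}}(0,T;\rH)$, so the weak limit is again $u$, which therefore belongs to $L^2(0,T;\mathrm{D}(\mathrm{A}))$, settling $\mathcal{T}_2$. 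Convergence in $\mathcal{T}_4$ is then immediate from Lemma~\ref{lemma5.4.1}: the uniform bound (a) together with $u_n\to u$ in $\ccal([0,T];\rH)$ gives $u_n\to u$ in $\ccal([0,T];\mathbb{B}_{\mathrm{w}})\subset\ccal([0,T];\rV_{\mathrm{w}})$.

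It remains to upgrade to strong convergence in $L^2(0,T;\rV)$, which I regard as the crux of the argument. The elementary interpolation bound
\[\|w\|_\rV^2=\langle\mathrm{A}w,w\rangle_\rH\le|\mathrm{A}w|_{L^2}\,|w|_\rH\le|w|_{\mathrm{D}(\mathrm{A})}\,|w|_\rH,\qquad w\in\mathrm{D}(\mathrm{A}),\]
combined with the Cauchy--Schwarz inequality in time, gives
\[\int_0^T\|u_n-u\|_\rV^2\,dt\le\Big(\int_0^T|u_n-u|_{\mathrm{D}(\mathrm{A})}^2\,dt\Big)^{1/2}\Big(\int_0^T|u_n-u|_\rH^2\,dt\Big)^{1/2}.\]
The first factor stays bounded by (b) and by the bound inherited by the limit $u$, while the second is at most $T^{1/2}\sup_{t\in[0,T]}|u_n-u|_\rH\to0$ by the $\mathcal{T}_1$-convergence; hence the left-hand side vanishes and $u_n\to u$ in $\mathcal{T}_3$.

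The single subsequence thus converges to $u$ in all four topologies, i.e. in $(\mathcal{Z}_T,\mathcal{T})$, which proves that $\kcal$ is $\mathcal{T}$-relatively compact. The main obstacle is the \emph{coordination} of the four limits---making sure one subsequence serves every topology and that the four limit objects coincide---together with the final interpolation step, which is what converts the merely \emph{weak} $\mathrm{D}(\mathrm{A})$-bound and the \emph{strong} $\rH$-convergence into \emph{strong} convergence in $L^2(0,T;\rV)$.
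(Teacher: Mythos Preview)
Your proof is correct and complete. It differs from the paper's in one respect worth noting: the paper handles the $\ccal([0,T];\rH)$ and $L^2(0,T;\rV)$ topologies together by invoking the Dubinsky compactness theorem (using the compact embedding $\mathrm{D}(\rA)\embed\rV$ together with (b) and (c)), whereas you treat them separately---first Arzel\`a--Ascoli (via the compact embedding $\rV\embed\rH$ and (a), (c)) for $\mathcal{T}_1$, and then the interpolation inequality $\|w\|_\rV^2\le|w|_{\mathrm{D}(\rA)}|w|_\rH$ to upgrade to $\mathcal{T}_3$. Your route is more elementary and self-contained, avoiding the external citation; the paper's is slightly shorter once one accepts Dubinsky's theorem as a black box. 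The remaining steps (metrizability on bounded sets, Banach--Alaoglu for $\mathcal{T}_2$, Lemma~\ref{lemma5.4.1} for $\mathcal{T}_4$, and identification of the limits) are essentially identical in both arguments.
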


\begin{proof}
Let $\kcal$ be a subset of $\mathcal{Z}_T$. Because of the assumption (\rm{a}) we may consider the metric space $\ccal([0,T]; \mathbb{B}_{\mathrm{w}}) \subset \ccal([0,T]; \rV_{\mathrm{w}})$ defined by \eqref{eq:5.4.1} and \eqref{eq:5.4.2} with $r = \sup_{u \in \kcal}\sup_{s \in [0,T]}\|u(s)\|_\rV$. Because of the assumption (\rm{b}) the restriction to $\kcal$ of the weak topology in $L^2(0,T; \mathrm{D}(\rA))$ is metrizable. Since the restrictions to $\kcal$ of the four topologies considered in $\mathcal{Z}_T$ are metrizable, compactness of a subset of $\mathcal{Z}_T$ is equivalent to its sequential compactness.

Let $(u_n)$ be a sequence in $\kcal$. By the Banach-Alaoglu Theorem, condition (\rm{b}) yields that $\bar{\kcal}$ is compact in $L^2_{\mathrm{w}}(0,T; \mathrm{D}(\rA))$. Condition (\rm{c}) implies that the functions $u_n$ are equicontinuous in $\ccal([0,T], \rH)$. Since the embeddings  $\mathrm{D}(\rA) \hookrightarrow \rV \hookrightarrow \rH$ are continuous and the embedding $\mathrm{D}(\rA) \hookrightarrow \rV$ is compact, then Dubinsky Theorem (see \cite[Theorem~IV.4.1]{[VF88]}) with conditions (\rm{b}) and (\rm{c}) imply that $\kcal$ is compact in $L^2(0,T; \rV) \cap \ccal([0,T]; \rH)$. Hence in particular, there exists a subsequence, still denoted by $(u_n)$, convergent in $\rH$. Therefore by Lemma~\ref{lemma5.4.1} $(u_n)$ is convergent in $\ccal([0,T]; \mathbb{B}_{\mathrm{w}})$. This completes the proof of the lemma.
\end{proof}

\subsection{Tightness}
\label{s:3.1}
Let $(\mathbb{S}, \varrho)$ be a separable and complete metric space.

\begin{definition}
\label{defn3.1}
Let $u \in \ccal([0,T]; \mathbb{S})$. The modulus of continuity of $u$ on $[0,T]$ is defined by
\[m(u, \delta) :=  \sup_{s,t \in [0,T],\,|t - s|\le \delta} \varrho (u(t), u(s)), \quad \delta > 0.\]
\end{definition}

Let $(\Omega, \mathcal{F}, \mathbb{P})$ be a probability space with filtration $\mathbb{F}:= (\mathcal{F}_t)_{t \in [0,T]}$ satisfying the usual conditions, see \cite{[Metivier82]}, and let $(X_n)_{n \in \mathbb{N}}$ be a sequence of continuous $\mathbb{F}$-adapted $\mathbb{S}$-valued processes.

\begin{definition}
\label{defn3.2}
We say that the sequence $(X_n)_{n \in \mathbb{N}}$ of $\mathbb{S}$-valued random variables satisfies condition $[\mathbf{T}]$ iff $\forall\, \varepsilon >0, \forall\, \eta > 0,\, \exists\, \delta > 0$:
\begin{equation}
\label{eq:3.2}
\sup_{n \in \mathbb{N}} \mathbb{P}\left\{m(X_n, \delta) > \eta\right\} \le \varepsilon.
\end{equation}
\end{definition}

\begin{lemma}
\label{lemma3.3}
Assume that $(X_n)_{n \in \mathbb{N}}$ satisfies condition $[\mathbf{T}]$. Let $\mathbb{P}_n$ be the law of $X_n$ on $\ccal([0,T]; \mathbb{S})$, $n \in \mathbb{N}$. Then for every $\varepsilon > 0$ there exists a subset $A_\varepsilon \subset \ccal([0,T]; \mathbb{S})$ such that
\[\sup_{n \in \mathbb{N}} \mathbb{P}_n(A_\varepsilon) \ge 1 - \varepsilon\]
and
\begin{equation}
\label{eq:3.3}
\lim_{\delta \to 0} \sup_{u \in A_\varepsilon} m(u, \delta) = 0.
\end{equation}
\end{lemma}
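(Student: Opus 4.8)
The plan is to construct $A_\varepsilon$ explicitly as a countable intersection of sublevel sets of the modulus of continuity, by applying condition $[\mathbf{T}]$ along a sequence of tolerances $\eta=1/k$ with geometrically decaying probability budgets $\varepsilon 2^{-k}$, and then to read off both conclusions from a union bound and from the monotonicity of $\delta\mapsto m(u,\delta)$. Before doing so I would record two elementary facts. First, for each fixed $\delta>0$ the map $u\mapsto m(u,\delta)$ is Borel measurable on $\ccal([0,T];\mathbb{S})$: since every $u$ is continuous and $[0,T]$ is compact (hence separable), the supremum defining $m(u,\delta)$ may be restricted to pairs $(s,t)$ in a fixed countable dense subset of $\{|t-s|\le\delta\}$, so $m(\cdot,\delta)$ is a countable supremum of continuous maps and is lower semicontinuous; consequently each set $\{u:m(u,\delta)>\eta\}$ is Borel and its $\mathbb{P}_n$-measure is well defined. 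Second, for fixed $u$ the function $\delta\mapsto m(u,\delta)$ is non-decreasing.

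Now fix $\varepsilon>0$. For each $k\in\mathbb{N}$ I would apply condition $[\mathbf{T}]$ with the parameters $\eta=1/k$ and budget $\varepsilon 2^{-k}$ to obtain some $\delta_k>0$ with
\[
\sup_{n\in\mathbb{N}}\mathbb{P}\{m(X_n,\delta_k)>1/k\}\le \varepsilon 2^{-k},
\]
and then set
\[
A_\varepsilon:=\bigcap_{k=1}^{\infty}\{u\in\ccal([0,T];\mathbb{S}):m(u,\delta_k)\le 1/k\},
\]
which is a countable intersection of Borel sets and hence Borel.

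The probability bound follows from a union bound together with the geometric series: since $\mathbb{P}_n\{m(\cdot,\delta_k)>1/k\}=\mathbb{P}\{m(X_n,\delta_k)>1/k\}$, we get
\[
\mathbb{P}_n(A_\varepsilon^c)\le\sum_{k=1}^{\infty}\mathbb{P}_n\{m(\cdot,\delta_k)>1/k\}\le\sum_{k=1}^{\infty}\varepsilon 2^{-k}=\varepsilon,
\]
so that $\mathbb{P}_n(A_\varepsilon)\ge 1-\varepsilon$ for every $n$, which yields the asserted inequality (in fact with $\inf$ in place of $\sup$). For the equicontinuity statement I would use monotonicity of the modulus: if $u\in A_\varepsilon$ and $\delta\le\delta_k$, then $m(u,\delta)\le m(u,\delta_k)\le 1/k$, hence $\sup_{u\in A_\varepsilon}m(u,\delta)\le 1/k$ for all $\delta\le\delta_k$. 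Given $\gamma>0$, choosing $k$ with $1/k<\gamma$ gives $\sup_{u\in A_\varepsilon}m(u,\delta)<\gamma$ for every $\delta\le\delta_k$, which is exactly $\lim_{\delta\to0}\sup_{u\in A_\varepsilon}m(u,\delta)=0$.

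I do not expect any genuine obstacle here: the argument is a Borel–Cantelli-type union bound combined with the monotonicity of $m(u,\cdot)$, and the geometric budget $\varepsilon 2^{-k}$ is precisely what makes the probability estimate uniform in $n$. The only point deserving a little care is the measurability of the modulus sublevel sets, so that the probabilities appearing above are legitimately defined; this is dispatched by the separability of $[0,T]$ as noted at the outset.
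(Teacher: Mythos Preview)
Your argument is correct and is the standard proof of this lemma: apply condition $[\mathbf{T}]$ with $\eta=1/k$ and probability budget $\varepsilon 2^{-k}$, intersect the resulting sublevel sets, and use a union bound plus monotonicity of $\delta\mapsto m(u,\delta)$. The paper itself states Lemma~\ref{lemma3.3} without proof (it is quoted as a known auxiliary result, alongside Lemma~\ref{lemma3.5}), so there is no alternative approach to compare against; your proof is exactly what one would supply. Your parenthetical remark that the conclusion actually holds with $\inf_n$ rather than $\sup_n$ is also well taken---the statement as printed is presumably a typo, since only the $\inf$ version is nontrivial and is what is used in the proof of Corollary~\ref{cor3.6}.
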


Now we recall the Aldous condition $[\mathbf{A}]$, which is connected with condition $[\mathbf{T}]$. This condition allows to investigate the modulus of continuity for the sequence of stochastic processes by means of stopped processes.

\begin{definition}
\label{defn3.4}
A sequence $(X_n)_{n \in \mathbb{N}}$ satisfies condition $[\mathbf{A}]$ iff $\forall\, \varepsilon > 0$, $\forall\, \eta > 0$, $\exists \, \delta > 0$ such that for every sequence $(\tau_n)_{n \in \mathbb{N}}$ of $\mathbb{F}$-stopping times with $\tau_n \le T$ one has
\[\sup_{n \in \mathbb{N}} \sup_{0 \le \theta \le \delta} \mathbb{P}\left\{\varrho(X_n(\tau_n + \theta), X_n(\tau_n)) \ge \eta \right\} \le \varepsilon.\]
\end{definition}

\begin{lemma}
\label{lemma3.5}
Conditions $[\mathbf{A}]$ and $[\mathbf{T}]$ are equivalent.
\end{lemma}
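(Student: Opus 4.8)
The plan is to establish the two implications separately. The implication $[\mathbf{T}]\Rightarrow[\mathbf{A}]$ is soft and follows from a pointwise domination, whereas $[\mathbf{A}]\Rightarrow[\mathbf{T}]$ carries the substance of the lemma (it is essentially Aldous' criterion); for the latter I would combine a discretisation of $[0,T]$ with a single hitting-time stopping time.

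For $[\mathbf{T}]\Rightarrow[\mathbf{A}]$, fix $\varepsilon,\eta>0$ and take the $\delta>0$ supplied by $[\mathbf{T}]$ applied to the pair $(\varepsilon,\eta/2)$, so that $\sup_n\mathbb{P}\{m(X_n,\delta)>\eta/2\}\le\varepsilon$. Extending every path by $X_n(t)=X_n(T)$ for $t>T$, for any $\mathbb{F}$-stopping time $\tau_n\le T$ and any deterministic $\theta\in[0,\delta]$ we have $|((\tau_n+\theta)\wedge T)-\tau_n|\le\theta\le\delta$, hence $\varrho(X_n(\tau_n+\theta),X_n(\tau_n))\le m(X_n,\delta)$ pointwise on $\Omega$. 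Therefore $\{\varrho(X_n(\tau_n+\theta),X_n(\tau_n))\ge\eta\}\subseteq\{m(X_n,\delta)\ge\eta\}\subseteq\{m(X_n,\delta)>\eta/2\}$, and taking probabilities and then the suprema over $n$ and over $\theta\in[0,\delta]$ yields the bound $\le\varepsilon$ demanded by $[\mathbf{A}]$.

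For the converse, fix $\varepsilon,\eta>0$. The first step is to reduce the modulus of continuity to an oscillation over a fixed grid: with grid points $t_i=i\delta\wedge T$, continuity of the paths and the triangle inequality give $m(X_n,\delta)\le 3\max_i\sup_{t\in[t_i,t_{i+1}]}\varrho(X_n(t),X_n(t_i))$, so it is enough to estimate the probability that some cell carries an oscillation of size $\eta/3$. To expose this event to the stopping-time information available in $[\mathbf{A}]$, I would track the oscillation from the current grid point and set $\tau_n:=\inf\{t:\varrho(X_n(t),X_n(\gamma(t)))\ge\eta/3\}\wedge T$, where $\gamma(t)=\max\{t_i:t_i\le t\}$. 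Since $t\mapsto\varrho(X_n(t),X_n(\gamma(t)))$ is right-continuous and adapted, $\tau_n$ is an $\mathbb{F}$-stopping time; on the bad event one has $\varrho(X_n(\tau_n),X_n(\gamma(\tau_n)))\ge\eta/3$, with $\sigma_n:=\gamma(\tau_n)$ again a stopping time lying at distance at most $\delta$ below $\tau_n$. Applying $[\mathbf{A}]$ along $\sigma_n$ is then meant to control the increment of $X_n$ over $[\sigma_n,\tau_n]$.

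The step I expect to be the main obstacle is exactly this last one. Condition $[\mathbf{A}]$ delivers control of increments $\varrho(X_n(\sigma_n+\theta),X_n(\sigma_n))$ over a deterministic shift $\theta\le\delta$, while the oscillation to be bounded is realised over the random shift $\tau_n-\sigma_n\in[0,\delta]$; moreover the number of grid cells is of order $T/\delta$ and diverges as $\delta\to0$, so a naive union bound over cells is too lossy. Both difficulties are the genuine content of Aldous' theorem. I would resolve them by the standard device of subdividing the shift variable and using path continuity to absorb the residual short-time oscillation, or, more economically, by invoking the equivalence in the form proved in M\'etivier \cite{[Metivier82]}, from which $[\mathbf{A}]\Rightarrow[\mathbf{T}]$ follows directly.
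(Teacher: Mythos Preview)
The paper does not prove this lemma at all: it is stated as a known result and left without proof, with the reference \cite{[Metivier82]} in the bibliography serving as the implicit source. So there is no ``paper's own proof'' to compare against; the paper simply quotes the equivalence.

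Your $[\mathbf{T}]\Rightarrow[\mathbf{A}]$ argument is correct and clean. For $[\mathbf{A}]\Rightarrow[\mathbf{T}]$ you correctly identify the real difficulty --- the random shift $\tau_n-\sigma_n$ versus the deterministic $\theta$ in condition $[\mathbf{A}]$, and the blow-up of the number of grid cells as $\delta\to 0$ --- but you do not actually resolve it: your sketch stops precisely at the point where Aldous' genuine argument begins, and your proposed fallback (``invoke M\'etivier'') is exactly what the paper does by stating the lemma without proof. If you want a self-contained argument, the standard route is a two-scale discretisation: one first uses $[\mathbf{A}]$ at a coarse scale $\delta_0$ to get a uniform-in-$n$ bound on $\sup_{\theta\le\delta_0}\mathbb{P}\{\varrho(X_n(\tau_n+\theta),X_n(\tau_n))\ge\eta\}$, then chooses a finite grid of mesh $\delta\ll\delta_0$ and, crucially, controls the maximal oscillation over each cell via a stopping time defined as the first exit from an $\eta$-ball centred at the left endpoint, so that the bad event forces a large increment at a \emph{single} stopping time rather than a union over $T/\delta$ cells. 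This is carried out in detail in M\'etivier's book and in Jacod--Shiryaev; absent that, your proposal as written is a correct reduction followed by a citation, which matches the paper's treatment.
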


Using the compactness criterion from Lemma~\ref{lemma5.4.2} and above results corresponding to Aldous condition we obtain the following corollary which we will use to prove the tightness of the laws defined by the Galerkin approximations.

\begin{corollary}[Tightness criterion]
\label{cor3.6}
Let $(X_n)_{n \in \mathbb{N}}$ be a sequence of continuous $\mathbb{F}$-adapted $\mathrm{H}$-valued processes such that
\begin{trivlist}
\item{(a)} there exists a constant $C_1 > 0$ such that
\[\sup_{n \in \mathbb{N}} \E \left[ \sup_{s \in [0,T]} \|X_n(s)\|^2_{\mathrm{V}} \right] \le C_1,\]

\item{(b)} there exists a constant $C_2 > 0$ such that
\[\sup_{n \in \mathbb{N}} \E \left[ \int_0^T |X_n(s)|^2_{\mathrm{D}(\mathrm{A})}\,ds \right] \le C_2,\]

\item{(c)} $(X_n)_{n \in \mathbb{N}}$ satisfies the Aldous condition $[\mathbf{A}]$ in $\mathrm{H}$.
\end{trivlist}
Let $\tilde{\mathbb{P}}_n$ be the law of $X_n$ on $\mathcal{Z}_T$. Then for every $\varepsilon > 0$ there exists a compact subset $K_\varepsilon$ of $\mathcal{Z}_T$ such that
\[ \sup_{n \in \mathbb{N}} \tilde{\mathbb{P}}_n(K_\varepsilon) \ge 1 - \varepsilon.\]
\end{corollary}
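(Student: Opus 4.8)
The plan is to build $K_\varepsilon$ as the $\mathcal{Z}_T$-closure of an intersection of three ``good'' sets, one for each hypothesis, arranged so that the intersection satisfies exactly the three conditions of the deterministic compactness criterion, Lemma~\ref{lemma5.4.2}. The two moment bounds are handled by Chebyshev's inequality. Fix $\varepsilon>0$. Using (a), choose $R_1>0$ with $C_1/R_1^2\le \varepsilon/3$, so that
\[\sup_{n}\mathbb{P}\Bigl\{\sup_{s\in[0,T]}\|X_n(s)\|_\rV>R_1\Bigr\}\le\frac{C_1}{R_1^2}\le\frac{\varepsilon}{3},\]
and using (b), choose $R_2>0$ with $C_2/R_2\le\varepsilon/3$, so that
\[\sup_{n}\mathbb{P}\Bigl\{\int_0^T|X_n(s)|^2_{\mathrm{D}(\rA)}\,ds>R_2\Bigr\}\le\frac{C_2}{R_2}\le\frac{\varepsilon}{3}.\]
Write $B_1=\{u:\sup_{s}\|u(s)\|_\rV\le R_1\}$ and $B_2=\{u:\int_0^T|u(s)|^2_{\mathrm{D}(\rA)}\,ds\le R_2\}$ for the corresponding sublevel sets.

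Next I would extract the $\rH$-equicontinuity from the Aldous condition. By hypothesis (c) and Lemma~\ref{lemma3.5}, the sequence $(X_n)$ satisfies condition $[\mathbf{T}]$ in $\rH$. Applying Lemma~\ref{lemma3.3} with $\varepsilon/3$ in place of $\varepsilon$ then produces a set $A_\varepsilon\subset\ccal([0,T];\rH)$ with $\sup_n\mathbb{P}_n(A_\varepsilon)\ge 1-\varepsilon/3$ and $\lim_{\delta\to0}\sup_{u\in A_\varepsilon}m(u,\delta)=0$. This last property is precisely condition (c) of Lemma~\ref{lemma5.4.2}.

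Finally I would assemble the pieces. Define $K_\varepsilon$ to be the closure in $\mathcal{Z}_T$ of $A_\varepsilon\cap B_1\cap B_2$. By construction this set is bounded in $\sup_{s}\|\cdot\|_\rV$ (from $B_1$) and in $L^2(0,T;\mathrm{D}(\rA))$ (from $B_2$), and has uniformly vanishing $\rH$-modulus of continuity (from $A_\varepsilon$); hence conditions (a)--(c) of Lemma~\ref{lemma5.4.2} are met, $A_\varepsilon\cap B_1\cap B_2$ is $\mathcal{Z}_T$-relatively compact, and so $K_\varepsilon$ is compact in $\mathcal{Z}_T$. For the probability estimate, since $A_\varepsilon\cap B_1\cap B_2\subseteq K_\varepsilon$, the union bound applied to the three events above gives, uniformly in $n$,
\[\tilde{\mathbb{P}}_n(K_\varepsilon)\ge\mathbb{P}\{X_n\in A_\varepsilon\cap B_1\cap B_2\}\ge 1-\frac{\varepsilon}{3}-\frac{\varepsilon}{3}-\frac{\varepsilon}{3}=1-\varepsilon,\]
which yields the claim (in fact with $\inf_n$ in place of $\sup_n$).

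I expect the substantive analysis to be entirely absorbed into Lemma~\ref{lemma5.4.2}, so the remaining points are bookkeeping: one checks that $A_\varepsilon\cap B_1\cap B_2$ is a Borel subset so that the pushforwards $\tilde{\mathbb{P}}_n$ are well defined, and that the modulus-of-continuity control furnished by Lemma~\ref{lemma3.3} for the law $\mathbb{P}_n$ on $\ccal([0,T];\rH)$ transfers to the paths of $X_n$ regarded as $\mathcal{Z}_T$-valued, which is legitimate because $\mathcal{Z}_T\embed\ccal([0,T];\rH)$ continuously.
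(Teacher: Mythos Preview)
Your proposal is correct and follows essentially the same route as the paper: Chebyshev on the moment bounds (a) and (b) to produce the sublevel sets $B_1$, $B_2$, then Lemmas~\ref{lemma3.5} and~\ref{lemma3.3} to extract the equicontinuity set $A_{\varepsilon/3}$, and finally Lemma~\ref{lemma5.4.2} applied to the closure of the triple intersection. The only cosmetic difference is which power of the norm you apply Markov/Chebyshev to in each of the first two steps, and your closing remark that the conclusion should read $\inf_n$ rather than $\sup_n$ is well taken.
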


\begin{proof}
Let $\varepsilon > 0$. By the  Chebyshev inequality and $(a)$, we infer that for any $n \in \N $ and any $r>0$
\[
  \tilde{\mathbb{P}}_n \biggl( \sup_{s \in [0,T]} \|X_n (s){\|}_{\rV}^{2} > r  \biggr)
  \le \frac{ \tilde{\E}_n \bigl[ \sup_{s \in [0,T]} \|X_n (s) {\|}_{\rV}^{2} \bigr]}{r}
  \le \frac{{C}_{1}}{r}.
\]
Let ${R}_{1}$ be such that $\frac{{C}_{1}}{{R}_{1}} \le \frac{\varepsilon }{3}$. Then
\[
  \sup_{n\in \N } \tilde{\mathbb{P}}_n \biggl( \sup_{s \in [0,T]}\|X_n (s){\|}_{\rV}^{2} > {R}_{1} \biggr) \le \frac{\varepsilon }{3}.
\]
Let ${B}_{1}:= \left\{ u \in \zcal_T :\, \, \sup_{s \in [0,T]}\|u(s) {\|}_{\rV}^{2} \le {R}_{1} \right\} $.\\
By the  Chebyshev inequality and $(b)$, we infer that for any $n \in \N $ and any $r>0$
\[
\tilde{\mathbb{P}}_n \bigl( |X_n|_{{L}^{2}(0,T; \rD(\rA))} > r  \bigr)
  \le \frac{\tilde{\E}_n \bigl[ |X_n|_{{L}^{2}(0,T; \rD(\rA))}^2 \bigr]  }{{r}^{2}}
  \le \frac{{C}_{2}}{{r}^{2}}.
\]
Let ${R}_{2}$ be such that $\frac{{C}_{2}}{{R}_{2}^{2}} \le \frac{\varepsilon }{3}$. Then
\[
  \sup_{n\in \N }  \tilde{\mathbb{P}}_n \bigl( |X_n |_{{L}^{2}(0,T; \rD(\rA))} > {R}_{2}  \bigr) \le \frac{\varepsilon }{3}.
\]
Let ${B}_{2} := \left\{ u \in \zcal_T : \, \, |u|_{{L}^{2}(0,T; \rD(\rA))} \le {R}_{2} \right\} $.\\
By Lemmas \ref{lemma3.3} and \ref{lemma3.5} there exists a subset
${A}_{\frac{\varepsilon }{3}} \subset \ccal ([0,T], \rH) $ such that
\[{\tilde{\mathbb{P} }}_{n} \bigl( {A}_{\frac{\varepsilon }{3}}\bigr) \ge 1 - \frac{\varepsilon }{3}\] 
and
\[
   \lim_{\delta \to 0 }  \sup_{u \in {A}_{\frac{\varepsilon }{3}}}
\sup_{\underset{|t-s| \le \delta }{s,t \in [0,T]}}  |u(t) - u(s){|}_{\rH} =0 .
\]
It is sufficient to define ${K}_{\varepsilon } $ as the closure  of the set ${B}_{1} \cap {B}_{2} \cap {A}_{\frac{\varepsilon }{3}}$ in $\zcal_T$. By Lemma  \ref{lemma5.4.2}, ${K}_{\varepsilon }$ is compact in $\zcal_T$. The proof is thus complete.
\end{proof}

\subsection{The Skorokhod Theorem}
\label{s:3.2}

We will use the following Jakubowski's generalisation of the Skorokhod Theorem in the form given by Brze\'{z}niak and Ondrej\'{a}t \cite{[BO11]}, see also \cite{[Jakubowski97]}.

\begin{theorem}
\label{thm3.7}
Let $\mathcal{X}$ be a topological space such that there exists a sequence $\{f_m\}_{m \in \mathbb{N}}$ of continuous functions $f_m : \mathcal{X} \to \R$ that separates points of $\mathcal{X}$. Let us denote by $\mathcal{S}$ the $\sigma$-algebra generated by the maps $\{f_m\}$. Then
\begin{trivlist}
\item{(a)} every compact subset of $\mathcal{X}$ is metrizable,
\item{(b)} if $(\mu_m)_{m \in \mathbb{N}}$ is a tight sequence of probability measures on $(\mathcal{X}, \mathcal{S})$, then there exists a subsequence $(m_k)_{k \in \mathbb{N}}$, a probability space $(\Omega, \mathcal{F}, \mathbb{P})$ with $\mathcal{X}$-valued Borel measurable variables $\xi_k, \xi$ such that $\mu_{m_k}$ is the law of $\xi_k$ and $\xi_k$ converges to $\xi$ almost surely on $\Omega$. Moreover, the law of $\xi$ is a Radon measure.
\end{trivlist}
\end{theorem}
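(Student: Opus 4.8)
The plan is to build everything from the single map $F\colon \mathcal{X} \to \R^{\mathbb{N}}$, $F(x) = (f_m(x))_{m \in \mathbb{N}}$, which is continuous (each coordinate is) and injective (the $f_m$ separate points), and to transport the probabilistic content to the Polish space $\R^{\mathbb{N}}$, where the classical Skorokhod representation theorem is available. Throughout I give $\R^{\mathbb{N}}$ the product topology, under which it is separable and completely metrizable.

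For part (a), let $K \subseteq \mathcal{X}$ be compact. Then $F|_K$ is a continuous injection of $K$ into $\R^{\mathbb{N}}$, so $F(K)$ is compact, hence closed, in the metrizable space $\R^{\mathbb{N}}$. A continuous bijection from a compact space onto a Hausdorff space is a homeomorphism, since it sends closed (hence compact) subsets to compact (hence closed) subsets, so its inverse is continuous. Thus $F|_K\colon K \to F(K)$ is a homeomorphism and $K$ is metrizable; concretely its topology is induced by
\[
d(x,y) = \sum_{m=1}^{\infty} 2^{-m}\,\frac{|f_m(x) - f_m(y)|}{1 + |f_m(x) - f_m(y)|}, \qquad x,y \in K.
\]

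For part (b), first I would use tightness to choose, for each $\ell \in \mathbb{N}$, a compact $K_\ell \subseteq \mathcal{X}$ with $\inf_{m} \mu_m(K_\ell) \ge 1 - 2^{-\ell}$, taking the $K_\ell$ increasing. Pushing forward by $F$, put $\nu_m = \mu_m \circ F^{-1}$ on $\R^{\mathbb{N}}$. Since $F(K_\ell)$ is compact and $\nu_m(F(K_\ell)) \ge \mu_m(K_\ell) \ge 1 - 2^{-\ell}$, the family $(\nu_m)$ is tight on the Polish space $\R^{\mathbb{N}}$. By Prokhorov's theorem and the classical Skorokhod theorem there are a subsequence $(m_k)$, a probability space $(\Omega, \mathcal{F}, \mathbb{P})$, and $\R^{\mathbb{N}}$-valued variables $Y_k, Y$ with $Y_k \sim \nu_{m_k}$, $Y_k \to Y$ almost surely, and the law of $Y$ the weak limit $\nu$ of $(\nu_{m_k})$. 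Writing $\Sigma := \bigcup_\ell F(K_\ell)$, each $F(K_\ell)$ is closed, so the portmanteau theorem gives $\nu(F(K_\ell)) \ge \limsup_k \nu_{m_k}(F(K_\ell)) \ge 1 - 2^{-\ell}$; hence $\nu(\Sigma) = 1$, and likewise $\nu_{m_k}(\Sigma) = 1$, so $Y$ and all $Y_k$ lie in $\Sigma$ almost surely.

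Finally I would pull back. On $\Sigma$ the map $F$ is a bijection onto $\mathbb{K} := \bigcup_\ell K_\ell$, and by part (a) its inverse $G := (F|_{\mathbb{K}})^{-1}$ is continuous on each compact $F(K_\ell)$; since $f_m \circ G$ is the $m$-th coordinate projection, $G$ is $(\mathcal{B}(\Sigma), \mathcal{S})$-measurable. Setting $\xi_k := G(Y_k)$ and $\xi := G(Y)$ on the full-probability set where $Y_k, Y \in \Sigma$, these are $\mathcal{X}$-valued and $\mathcal{S}$-measurable; because each $\mu_{m_k}$ is concentrated on $\mathbb{K}$ and $G \circ F = \mathrm{id}$ there, the law of $\xi_k$ is exactly $\mu_{m_k}$, while the law of $\xi$ is carried by the $\sigma$-compact set $\mathbb{K}$ and is therefore Radon. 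The main obstacle is upgrading the almost sure convergence $Y_k \to Y$ in $\R^{\mathbb{N}}$ — which a priori only yields convergence of $\xi_k$ to $\xi$ in the (possibly strictly coarser) initial topology generated by the $f_m$ — to almost sure convergence in the topology of $\mathcal{X}$. Here the metrizability of the compacts from part (a) is essential: along any subsequence, if $\xi_{k_j}$ converges in $\mathcal{X}$ then continuity and injectivity of $F$ force the limit to equal $\xi$, so it suffices to secure relative compactness of $(\xi_k)$ inside the metrizable sets $K_\ell$. Controlling this cannot be done by the naive push-forward alone and forces one to refine the coupling via Jakubowski's layer-by-layer construction — decomposing each $\mu_{m_k}$ over $K_\ell \setminus K_{\ell-1}$, representing on each metrizable layer, and patching by a randomisation with a diagonal argument in $\ell$ — which is the technical heart of the result and the step I would follow \cite{[Jakubowski97], [BO11]} rather than re-derive.
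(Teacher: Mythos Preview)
The paper does not prove Theorem~\ref{thm3.7}; it is stated without proof as Jakubowski's generalisation of the Skorokhod theorem in the form of \cite{[BO11]}, with a reference to \cite{[Jakubowski97]}. So there is no ``paper's own proof'' to compare against.

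That said, your sketch follows the standard architecture of Jakubowski's argument: the embedding $F$ into $\R^{\mathbb N}$, metrizability of compacts via $F$, transport of tightness, classical Skorokhod on the Polish target, and pull-back through the inverse on a $\sigma$-compact set. Your identification of the genuine difficulty --- that a.s.\ convergence $Y_k\to Y$ in $\R^{\mathbb N}$ only gives convergence of $\xi_k$ in the initial topology of the $f_m$, not in the original topology of $\mathcal X$ --- is exactly right, and your acknowledgement that this forces the layer-by-layer coupling over the compacts $K_\ell$ (with a diagonal/randomisation step) is precisely where the work in \cite{[Jakubowski97]} lies. As a self-contained proof your write-up therefore stops short at the crucial step, but as a proposal that defers that step to the cited sources it is accurate and matches what the paper itself does, namely quote the result.
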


\begin{lemma}\label{lemma5.4.5} The topological space ${\mathcal{Z}}_{T}$  satisfies the assumptions of Theorem~\ref{thm3.7}.
\end{lemma}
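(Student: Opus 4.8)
The plan is to verify the single hypothesis of Theorem~\ref{thm3.7}: that $\mathcal{Z}_T$ admits a countable family $\{f_m\}_{m \in \N}$ of continuous real-valued functions which separates points. The organizing observation is that $\mathcal{T}$ is, by construction, the supremum of the four topologies $\mathcal{T}_1,\mathcal{T}_2,\mathcal{T}_3,\mathcal{T}_4$ attached to the four spaces whose intersection defines $\mathcal{Z}_T$. Consequently $\mathcal{T}$ is finer than each $\mathcal{T}_i$, so any function continuous for one of the component topologies is automatically $\mathcal{T}$-continuous. It therefore suffices to produce a countable separating family of continuous functions on each of the four factors and take their (countable) union.

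For the strongly topologized factors $\ccal([0,T];\rH)$ and $L^2(0,T;\rV)$, which are separable Banach spaces, I would invoke the standard consequence of Hahn--Banach and separability that a separable Banach space carries a countable subset of its dual separating points. Concretely, fixing a countable orthonormal basis $\{h_k\}$ of $\rH$ and a countable dense set $\{t_i\}\subset[0,T]$, the evaluations $u\mapsto \langle u(t_i),h_k\rangle_{\rH}$ are $\mathcal{T}_1$-continuous and already separate points of $\ccal([0,T];\rH)$. For the weak factor $L^2_{\mathrm{w}}(0,T;\rD(\rA))$, since $L^2(0,T;\rD(\rA))$ is a separable Hilbert space, a countable dense set $\{g_l\}$ furnishes the weakly continuous functionals $u\mapsto \int_0^T \langle u(s),g_l(s)\rangle_{\rD(\rA)}\,ds$, which separate points and are $\mathcal{T}_2$-continuous by the very definition of the weak topology. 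Finally, for $\ccal([0,T];\rV_{\mathrm{w}})$, choosing a countable dense set $\{h_k'\}\subset\rV$ and rationals $t_i$, the maps $u\mapsto \langle u(t_i),h_k'\rangle_{\rV}$ are $\mathcal{T}_4$-continuous (this is precisely the continuity built into the definition of $\mathcal{T}_4$, composed with evaluation at $t_i$) and separate points of that factor.

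The countable union of these four families then supplies the required separating sequence, and all its members are $\mathcal{T}$-continuous by the reduction above. In fact, point separation on the whole of $\mathcal{Z}_T$ reduces to the first factor alone: if $u\neq \v$ in $\mathcal{Z}_T$, then they differ already as $\rH$-valued continuous functions, the inclusion $\mathcal{Z}_T\embed\ccal([0,T];\rH)$ being injective, so the family $u\mapsto\langle u(t_i),h_k\rangle_{\rH}$ distinguishes them. The only genuine care is needed in the two weakly topologized factors, where one must check that the chosen functionals are continuous for the weak topologies and that separability of $\rD(\rA)$ and $\rV$ permits the extraction of countable separating subfamilies; everything else is routine bookkeeping.
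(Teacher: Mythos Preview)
Your proposal is correct and follows essentially the same route as the paper: reduce to exhibiting, on each of the four factor spaces, a countable family of continuous real-valued functions that separates points, and then take the union (noting that $\mathcal{T}$-continuity follows because $\mathcal{T}$ refines each $\mathcal{T}_i$). The paper argues the two strongly topologized factors by a general appeal to separability and completeness (citing Badrikian) rather than writing down explicit evaluation functionals, and handles the two weak factors exactly as you do; your additional remark that separation on all of $\mathcal{Z}_T$ already follows from the $\ccal([0,T];\rH)$ factor alone is a pleasant observation that the paper does not make, but the overall strategy is the same.
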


\begin{proof}
We want to prove that on each space appearing in the definition \eqref{eq:3.1} of the space $\mathcal{Z}_T$ there exists a countable set of continuous real-valued functions separating points.

Since the spaces $\ccal ([0,T]; \rH)$ and ${L}^{2}(0,T; \rV)$ are separable, metrizable and complete, this condition is satisfied, see \cite{[Badrikian70]}, expos\'{e} 8.

For the space ${L}^{2}_{\mathrm{w}}(0,T; \rD(\rA))$ it is sufficient to put
\[
    {f}_{m}(u):= \int_{0}^{T} \langle u(t), \v_m(t) \rangle_{\rD(\rA)} \, dt \in \R ,
 \qquad u \in {L}^{2}_{\mathrm{w}}(0,T; \rD(\rA)),\quad m \in \N ,
\]
where $\{ {\v}_{m}, m \in \N  \} $ is a dense subset of ${L}^{2}(0,T; \rD(\rA))$.

Let us consider the space $\ccal ([0,T];{\rV}_{\mathrm{w}})$. Let $\{ {h}_{m}, \, m \in \N  \}  $ be any dense subset of $\rH$ and let ${\mathbb{Q}}_{T}$ be the  set of rational numbers belonging to the interval $[0,T]$.
Then the family $\{ {f}_{m,t}, \, m \in \N , \, \, t \in {\mathbb{Q}}_{T} \} $ defined by
\[
      {f}_{m,t}(u):= \langle u(t), h_m \rangle_\rV \in \R ,
 \qquad u \in  \ccal ([0,T];{\rV}_{\mathrm{w}}), \quad m \in \N ,
      \quad t \in {\mathbb{Q}}_{T}
\]
consists of continuous functions separating points in $\ccal ([0,T];{\rV}_{\mathrm{w}})$, thus concluding the proof of the lemma.
\end{proof}

Using Theorem~\ref{thm3.7} and Lemma~\ref{lemma5.4.5}, we obtain the following corollary which we will apply to construct a martingale solution to the stochastic constrained Navier-Stokes equations \eqref{eq:4.1}.

\begin{corollary}
\label{cor3.8}
Let $(\eta_n)_{n \in \mathbb{N}}$ be a sequence of $\mathcal{Z}_T$-valued random variables such that their laws $\mathcal{L}(\eta_n)$ on $(\mathcal{Z}_T, \mathcal{T})$ form a tight sequence of probability measures. Then there exists a subsequence $(n_k)$, a probability space $(\tilde{\Omega}, \tilde{\mathcal{F}}, \tilde{\mathbb{P}})$ and $\mathcal{Z}_T$-valued random variables $\tilde{\eta}$, $\tilde{\eta}_k, k \in \mathbb{N}$ such that the variables $\eta_k$ and $\tilde{\eta}_k$ have the same laws on $\mathcal{Z}_T$ and $\tilde{\eta}_k$ converges to $\tilde{\eta}$ almost surely on $\tilde{\Omega}$.
\end{corollary}

\section{Faedo-Galerkin approximation and existence of a martingale solutions}
\label{s:5}
As mentioned in the introduction, the proof of the existence of a martingale solution is based on the Faedo-Galerkin approximation. In this subsection we first talk about the basic ingredients required for the approximation and then obtain the a'priori estimates, which we later use in the Subsection~\ref{s:5.2} to prove the tightness of laws induced by the solutions of the approximating equations \eqref{eq:5.3}.\\

Let $\{e_i\}_{i=1}^\infty$ be the orthonormal basis in $\mathrm{H}$ composed of eigenvectors of $\mathrm{A}$. Let 
\[\mathrm{H}_n := \mathrm{span} \{e_1, \dots, e_n\}\]
be the subspace with the norm inherited from $\mathrm{H}$, then $P_n : \mathrm{H} \to \mathrm{H}_n$ given by
\begin{equation}
\label{eq:5.2}
P_n u := \sum_{i=1}^n \langle u, e_i \rangle_{\mathrm{H}}\, e_i\,, \quad u \in \mathrm{H}\,,
\end{equation}
is the orthogonal projection onto $\rH_n$.\\

\noindent Let us consider the classical Faedo-Galerkin approximation of \eqref{eq:4.1} in the space $\mathrm{H}_n \colon$

\begin{equation}
\label{eq:5.3}
\begin{cases}
du_n(t) = - \left[ P_n \mathrm{A}u_n(t) + P_n B(u_n(t)) +|\nabla u_n(t)|_{L^2}^2u_n(t) \right]dt \\
\quad \quad \quad \quad + \sum_{j=1}^m P_n C_j u_n(t) \circ dW_j(t), \quad \quad \quad \;\;\;\;\;\;\; t \in [0,T],\\
u_n(0) = \frac{P_n u_0}{|P_n u_0|}.
\end{cases}
\end{equation}

Using the idea from \cite{[Hussain15]} and the Banach Fixed Point Theorem we can show that the SDE \eqref{eq:5.3} has a local maximal solution up to some stopping time $\tau\leq T$. In the following lemma we show that this local solution stays on the manifold $\mathcal{M}$, i.e. $u_n(t) \in \mathcal{M}$ for every $t \in [0,\tau)$.
\begin{lemma}
\label{lemma5.3}
Let $u_0 \in \mathrm{V} \cap \mathcal{M}$ then the solution of \eqref{eq:5.3} stays on the manifold $\mathcal{M}$, i.e. for all $t \in [0,\tau)$, $u_n(t) \in \mathcal{M}$.
\end{lemma}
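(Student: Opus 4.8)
The plan is to work entirely inside the finite-dimensional space $\mathrm{H}_n$, where \eqref{eq:5.3} is a Stratonovich stochastic differential equation with smooth coefficients, and to track the scalar quantity $y(t) := |u_n(t)|_{L^2}^2 = \langle u_n(t), u_n(t)\rangle_{\mathrm{H}}$ up to the stopping time $\tau$. The decisive structural feature is that in Stratonovich form the ordinary chain rule applies, so that
\[
dy(t) = 2\langle u_n(t), du_n(t)\rangle_{\mathrm{H}},
\]
and I can substitute the right-hand side of \eqref{eq:5.3} directly, testing each term against $u_n(t)$.

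First I would dispose of the terms that vanish by the algebraic identities recorded in Section~\ref{s:2}. Since $P_n$ is the orthogonal projection onto $\mathrm{H}_n$, it is self-adjoint and $P_n u_n = u_n$; hence the Stokes term contributes $\langle u_n, P_n \mathrm{A} u_n\rangle_{\mathrm{H}} = \langle u_n, \mathrm{A} u_n\rangle_{\mathrm{H}} = |\nabla u_n|_{L^2}^2$. The nonlinear term drops out because $\langle u_n, P_n B(u_n)\rangle_{\mathrm{H}} = b(u_n,u_n,u_n) = 0$ by \eqref{eq:2.4}. Most importantly, each noise coefficient satisfies $\langle u_n, P_n C_j u_n\rangle_{\mathrm{H}} = \langle u_n, C_j u_n\rangle_{\mathrm{H}} = 0$ by the skew-symmetry \eqref{eq:2.6}; in Stratonovich form this is precisely the statement that the differential of $y$ carries no martingale part, which is the crux of the constrained construction and the reason the tangency of the noise to $\mathcal{M}$ is preserved at the Galerkin level.

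What remains is the drift generated by the Stokes term together with the constraint term, which by construction (cf.\ \eqref{eq:2.8} and \eqref{eq:4.1}) enters the right-hand side as $+|\nabla u_n|_{L^2}^2 u_n$ and therefore contributes $|\nabla u_n|_{L^2}^2\langle u_n, u_n\rangle_{\mathrm{H}} = |\nabla u_n|_{L^2}^2\, y$. Collecting the surviving terms yields the closed scalar equation
\[
\frac{dy}{dt} = 2\,|\nabla u_n(t)|_{L^2}^2\,\bigl(y(t) - 1\bigr), \qquad t \in [0,\tau),
\]
with pathwise continuous, locally bounded coefficient. Since $u_n(0) = P_n u_0 / |P_n u_0|$ has unit $L^2$-norm, $y(0) = 1$, so $z(t) := y(t) - 1$ solves the linear homogeneous equation $\dot z = 2|\nabla u_n|_{L^2}^2\, z$ with $z(0) = 0$; by Gr\"onwall's inequality $z \equiv 0$, i.e.\ $y(t) \equiv 1$, which is exactly the assertion $u_n(t) \in \mathcal{M}$ for all $t \in [0,\tau)$.

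The only delicate point is the appeal to the Stratonovich chain rule, or equivalently the passage to It\^o form, and this is the step I expect to require the most care. If one argues in It\^o calculus instead, I would convert \eqref{eq:5.3}, picking up the correction drift $\tfrac12\sum_{j=1}^m P_n C_j P_n C_j u_n$, apply the It\^o formula to $y = |u_n|_{L^2}^2$, and verify that the quadratic-variation term $\sum_{j=1}^m |P_n C_j u_n|_{L^2}^2$ is cancelled exactly by $\sum_{j=1}^m \langle u_n, P_n C_j P_n C_j u_n\rangle_{\mathrm{H}}$. This cancellation follows from $C_j^\ast = -C_j$ together with $P_n = P_n^\ast = P_n^2$ (which give $\langle u_n, C_j P_n C_j u_n\rangle_{\mathrm{H}} = -|P_n C_j u_n|_{L^2}^2$), and it is the main technical obstacle in an It\^o-based argument. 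Either route leads to the same scalar ODE and hence to the same conclusion.
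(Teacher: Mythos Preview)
Your proof is correct and follows essentially the same route as the paper: compute $d|u_n|^2_{\mathrm{H}}$ by stochastic calculus, use the skew-symmetry \eqref{eq:2.6} and \eqref{eq:2.4} to kill the noise and nonlinear contributions, and reduce to the scalar ODE $\dot z = 2\|u_n\|_{\mathrm{V}}^2\,z$ for $z=|u_n|_{\mathrm{H}}^2-1$ with $z(0)=0$. The only cosmetic difference is that the paper works directly in It\^o form (and so carries out exactly the cancellation you describe in your final paragraph), whereas you lead with the Stratonovich chain rule; both arrive at the identical linear ODE and conclusion.
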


\begin{proof}
Let $u_n$ be the solution of \eqref{eq:5.3}. Then applying It\^o formula to the function $|x|^2_\rH$ and the process $u_n$ along \eqref{eq:5.3}, \eqref{eq:2.4} and assumption $(A.1)$, we get

\begin{align*}
\frac12 d|u_n(t)|_\rH^2 & =  \langle u_n(t), - P_n \mathrm{A} u_n(t) - P_nB(u_n(t)) + |\nabla u_n(t)|_{L^2}^2 u_n(t)\rangle_\rH\,dt  \\
&~~ + \dfrac{1}{2} \sum_{j=1}^m \langle u_n(t),  (P_n C_j)^2 u_n(t) \rangle_\rH\, dt + \dfrac{1}{2} \sum_{j=1}^m \langle P_n C_j u_n(t), P_n C_j u_n(t) \rangle_\rH\,dt  \\
&~~ + \sum_{j=1}^m \langle u_n(t), P_n C_j u_n(t)\,dW_j(t) \rangle_\rH \\
& = - \|u_n(t)\|_{\mathrm{V}}^2 dt +  |\nabla u_n(t)|_{L^2}^2 |u_n(t)|_\rH^2 dt + \frac12 \sum_{j=1}^m \langle C_j^{\ast} u_n(t), C_j u_n(t) \rangle_{\rH}\,dt \\
&~~ + \frac12 \sum_{j=1}^m |C_j u_n(t)|_\rH^2\,dt \\
& = \|u_n(t)\|_{\mathrm{V}}^2 \left[ |u_n(t)|^2_\rH - 1 \right] dt + \frac12 \sum_{j=1}^m \left[|C_j u_n(t)|_\rH^2 - |C_j u_n(t)|_\rH^2 \right] dt
\end{align*}
thus we get,
\[ d \left[|u_n(t)|^2_\rH - 1 \right] = 2\|u_n(t)\|_{\mathrm{V}}^2 \left[|u_n(t)|_\rH^2 - 1 \right] dt. \]
Integrating on both sides from $0$ to $t$, we obtain
\[ |u_n(t)|^2 - 1 = \left[|u_n(0)|^2_\rH - 1 \right] \exp{ \left[ 2 \int_0^t \|u_n(s)\|_{\mathrm{V}}^2~ds \right]}. \]
Now since $|u_n(0)|_\rH = 1$ and $\int_0^t \|u_n(s)\|_{\mathrm{V}}^2\,ds < \infty$, we get $|u_n(t)|_\rH = 1$ for all $t \in [0, \tau)$, i.e $u_n(t) \in \mathcal{M}$ for every $t \in [0, \tau)$.
\end{proof}

Since on the finite dimensional space $\rH_n$ the $\rH$ and $\rV$ norm are equivalent, we can infer from the previous lemma that the $\rV$-norm of the solution stays bounded. Hence using this non-explosion result as in the case of deterministic setting \cite{[BDM16]} we can prove the following lemma$\colon$

\begin{lemma}
\label{lemma5.2}
For each $n \in \mathbb{N}$, there exists a global solution of \eqref{eq:5.3}. Moreover for every $T > 0$, $u_n \in \ccal([0,T]; \mathrm{H}_n), \mathbb{P}$-a.s. and for any $q \in [2, \infty)$
\[ \E \left[\int_0^T |u_n(s)|^q_{\mathrm{H}}\,ds \right] < \infty.\]
\end{lemma}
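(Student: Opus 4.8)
The plan is to bootstrap the local existence already supplied by the Banach Fixed Point Theorem into global existence, using the fact that the solution is confined to the bounded manifold $\mathcal{M}$ together with the no-explosion principle for finite-dimensional stochastic differential equations. First I would note that, for each fixed $n$, the coefficients of \eqref{eq:5.3} are polynomial, hence locally Lipschitz, on the finite-dimensional space $\rH_n$: the maps $u \mapsto P_n \mathrm{A} u$ and $u \mapsto P_n C_j u$ are linear, $u \mapsto P_n B(u)$ is bilinear, and $u \mapsto |\nabla u|_{L^2}^2\, u$ is cubic (and after conversion to the equivalent It\^o form the correction $\tfrac12\sum_j (P_n C_j)^2 u$, already used in the proof of Lemma~\ref{lemma5.3}, is again linear). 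Thus \eqref{eq:5.3} has a unique maximal solution $u_n$ on $[0,\tau)$, where $\tau$ is its explosion time, and the no-explosion dichotomy applies.

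Next I would convert the constraint into a deterministic a priori bound. By Lemma~\ref{lemma5.3} the solution stays on $\mathcal{M}$, so $|u_n(t)|_{\rH} = 1$ for every $t \in [0,\tau)$. Since $\rH_n$ is finite-dimensional, the norms $|\cdot|_{\rH}$ and $\|\cdot\|_{\rV}$ are equivalent on $\rH_n$, so there is a constant $c_n > 0$ (allowed to depend on $n$) with $\|u\|_{\rV} \le c_n\, |u|_{\rH}$ for all $u \in \rH_n$. Consequently
\[
\sup_{t \in [0,\tau)} \|u_n(t)\|_{\rV} \;\le\; c_n \sup_{t \in [0,\tau)} |u_n(t)|_{\rH} \;=\; c_n ,
\]
a bound that is deterministic and independent of $\omega$.

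I would then conclude by the no-explosion principle: for a finite-dimensional SDE with locally Lipschitz coefficients, on the event $\{\tau < T\}$ one has $\limsup_{t \uparrow \tau} \|u_n(t)\|_{\rV} = \infty$ almost surely, which is incompatible with the uniform bound $c_n$ above; hence $\p(\tau < T) = 0$, the solution does not blow up, and it is defined on all of $[0,T]$ (equivalently $\tau = T$ $\p$-a.s.). The path continuity $u_n \in \ccal([0,T]; \rH_n)$ is then just the standard continuity of solutions of finite-dimensional It\^o equations. The final integrability assertion is immediate from the constraint: since $u_n(s) \in \mathcal{M}$ forces $|u_n(s)|_{\rH} = 1$ for all $s \in [0,T]$ $\p$-a.s., for every $q \in [2,\infty)$ we have $\int_0^T |u_n(s)|_{\rH}^q\,ds = T$, whence $\E\bigl[\int_0^T |u_n(s)|_{\rH}^q\,ds\bigr] = T < \infty$.

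The only genuinely nontrivial point — and thus the main obstacle — is the passage from confinement on the bounded manifold $\mathcal{M}$ to global existence. This rests entirely on the equivalence of norms on the finite-dimensional space $\rH_n$, which is precisely why the resulting bound $c_n$ depends on $n$ and cannot yet be made uniform; the uniform-in-$n$ a priori estimates needed later (in $\rV$ and $\rD(\mathrm{A})$) require a separate and more delicate argument. Everything else here is routine.
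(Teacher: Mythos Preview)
Your argument is correct and is essentially the same as the paper's: the paper too obtains local existence via a fixed-point argument, invokes Lemma~\ref{lemma5.3} to confine $u_n$ to $\mathcal{M}$, appeals to the equivalence of the $\rH$- and $\rV$-norms on the finite-dimensional space $\rH_n$ to get a (possibly $n$-dependent) bound on $\|u_n(t)\|_\rV$, and then uses the standard non-explosion principle to conclude global existence. Your write-up is more explicit than the paper's sketch, but the route is identical.
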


\subsection{A'priori estimates}
\label{s:5.1}

We will require the following lemma to obtain a'priori bounds.

\begin{lemma}
\label{lemma5.4}
Let $c \in \R^2$ and let $\mathbf{c}:\mathbb{T}^2\to \mathbb{R}^2$  be the corresponding  constant vector field. Put, for $u \in H^{1,2}(\mathbb{T}^2, \mathbb{R}^2)$,
\[\tilde{C}u = \mathbf{c} \cdot \nabla\, u \quad \quad \mbox{and}\quad \quad C u = \Pi(\tilde{C}u).\]
If the vector field $u\in H^{2,2}(\mathbb{T}^2, \mathbb{R}^2)$ is divergence free, then $\tilde{C} u$ is divergence free as well. Moreover, 
\begin{equation}
\label{eq:5.4}
\rA Cu - C \rA u = 0\,,\quad\;\; u \in H^{3,2}(\mathbb{T}^2, \mathbb{R}^2).
\end{equation}
\end{lemma}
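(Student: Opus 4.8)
The plan is to prove the two assertions separately, both of which rest on the single structural fact that, since $c$ is a \emph{constant} vector field, the transport operator $\tilde{C} = \mathbf{c}\cdot\nabla$ is a constant-coefficient first-order differential operator and therefore commutes with every other constant-coefficient operator in the problem as well as with the Leray--Helmholtz projection $\Pi$.

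For the first claim I would argue by a direct computation. Writing $(\tilde{C} u)^j = \sum_i c_i\,\partial_i u^j$ and using that the $c_i$ are constants, one gets
\[
\divv(\tilde{C} u) = \sum_{j}\partial_j\Big(\sum_i c_i\,\partial_i u^j\Big) = \sum_i c_i\,\partial_i\Big(\sum_j \partial_j u^j\Big) = \tilde{C}(\divv u),
\]
so that $\divv \tilde{C} u = \tilde{C}(\divv u) = 0$ whenever $\divv u = 0$; the interchange of derivatives is legitimate because $u \in H^{2,2}$. I would also observe in passing that $\tilde{C} u$ has zero mean, since $\int_{\T}\partial_i u\,dx = 0$ by periodicity; combined with divergence-freeness this yields $Cu = \Pi \tilde{C} u = \tilde{C} u$ for divergence-free $u$, which is convenient though not strictly needed below.

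For the commutation identity \eqref{eq:5.4} the essential step is to record three pairwise commutation relations on $\T$: that $\Delta$ and $\tilde{C}$ commute, being both constant-coefficient; that $\Pi$ commutes with $\Delta$; and that $\Pi$ commutes with $\tilde{C}$. The cleanest way to see the last two is in Fourier variables, where on the mode $e^{ik\cdot x}$ the operator $\Delta$ acts as the scalar $-|k|^2$, the operator $\tilde{C}$ acts as the scalar $i(c\cdot k)$, and $\Pi$ acts as the orthogonal projection $P_k = I - |k|^{-2}\,k\otimes k$ onto $k^{\perp}$; since a scalar multiple of the identity commutes with any projection, $\Pi$ commutes with both $\Delta$ and $\tilde{C}$ mode by mode. (Alternatively one may invoke the representation $\Pi = I - \nabla\Delta^{-1}\divv$ together with the fact that $\tilde{C}$ commutes with each of $\nabla$, $\divv$ and $\Delta$.) Granting these relations and the idempotence $\Pi^2 = \Pi$, the identity follows from a short chain of substitutions:
\[
\rA C u = -\Pi\Delta\Pi\tilde{C} u = -\Pi\Delta\tilde{C} u = -\Pi\tilde{C}\Delta u = -\Pi\tilde{C}\Pi\Delta u = C\rA u,
\]
valid for $u \in H^{3,2}(\T,\R^2)$, so that the three derivatives in $\Delta\tilde{C}u$ land in $L^2$.

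The only genuine obstacle is the commutation of $\Pi$ with $\tilde{C}$, and this is precisely where the hypothesis that $c$ is constant is indispensable: for a non-constant $c$ the symbol of $\tilde{C}$ would no longer act as a scalar on each Fourier mode and would fail to commute with $P_k$. Everything else is routine bookkeeping with constant-coefficient operators and the idempotence of $\Pi$.
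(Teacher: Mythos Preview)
Your proof is correct. For the first assertion it is essentially identical to the paper's: both compute $\divv(\tilde{C}u)=\tilde{C}(\divv u)$ directly, exploiting that $c$ is constant.

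For the commutation identity \eqref{eq:5.4} you take a genuinely different, more structural route. The paper uses the first part to observe that on divergence-free inputs the projection $\Pi$ drops out entirely (i.e.\ $Cu=\tilde{C}u$ and $C\rA u=\tilde{C}\rA u$), reducing the claim to the bare identity $\Delta\tilde{C}u=\tilde{C}\Delta u$, which it then verifies by an explicit coordinate computation. You instead keep $\Pi$ in play and establish the three pairwise commutations $[\Delta,\tilde{C}]=[\Pi,\Delta]=[\Pi,\tilde{C}]=0$ at the level of Fourier symbols (each acting as a scalar on individual modes, hence commuting with the mode-wise projection $P_k$), and then string these together algebraically. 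Your approach makes the role of the constant-coefficient hypothesis more transparent---it is exactly what makes $\tilde{C}$ act as a scalar on each Fourier mode---and it does not rely on $u$ being divergence free in the second step. The paper's approach is shorter and more elementary, avoiding any Fourier language, at the cost of being slightly more opaque about why the projections disappear. Amusingly, the observation you flag as ``convenient though not strictly needed'' ($Cu=\tilde{C}u$ for divergence-free $u$) is precisely the hinge of the paper's argument.
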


\begin{proof}
Let $c = (c_1, c_2)$ then $\tilde{C}  u  = (c_1 D_1 + c_2 D_2) u$. We have
\begin{align*}
\divv(\tilde{C} u) & = D_1\bigl((c_1 D_1 + c_2 D_2) u_1 \bigr) + D_2 \bigl( (c_1 D_1 + c_2 D_2 ) u_2 \bigr)\\
\delb{& = D_1 c_1 D_1 u_1 + c_1 D_1^2 u_1 + D_1 c_2 D_2 u_1 + c_2 D_{12} u_1\\
&\quad + D_2 c_1 D_1 u_2 + c_1 D_{12} u_2 + D_2 c_2 D_2 u_2 + c_2 D_2^2 u_2 \\}
& = c_1 D_1D_1 u_1 + c_2 D_1D_2 u_1 + c_1 D_{2}D_{1} u_2  + c_2 D_2D_{2} u_2 \\
& =  c_1 D_1\bigl(D_1 u_1 +  D_2 u_2 \bigr) + c_2 D_2\bigl( D_1 u_1+ D_2 u_2  \bigr) \\
& = \left(c_1D_1 + c_2 D_2 \right)\left(\divv u\right) = 0\,,
\end{align*}
where  we  used that vector $c$ is constant and $u$ is divergence free respectively. In order to establish the equality \eqref{eq:5.4} we start by considering $\mathrm{A} C u - C \mathrm{A} u$. Since $\rA u$ is divergence free, from the previous calculations we have $\Pi(\tilde{C} \rA u) = \tilde{C} \rA u$. Thus
\begin{align*}
\rA C u - C \rA u & = - \Delta \bigl( (c_1 D_1 + c_2 D_2)u\bigr) - \bigl(c_1 D_1 + c_2 D_2\bigr)\left(-\Delta u \right) \\
& = - \left[\delb{\Delta(c_1)D_1 u +} c_1 \Delta D_1 u \delb{+ \Delta(c_2) D_2 u} + c_2 \Delta D_2 u \delb{+ 2 D_1 c_1 D_1^2 u + 2 D_2 c_2 D^2_2 u} \right] + \left[c_1 \Delta D_1 u + c_2 \Delta D_2 u  \right] = 0\,,
\end{align*}
since $c$ is a constant vector, completing the proof.
\end{proof}

\begin{lemma}
\label{lemma5.5}
Let $T > 0$ and $u_n$ be the solution of \eqref{eq:5.3}. Then under the assumptions $(A.1) - (A.2)$, for all $\rho > 0$ and $p \in [1, 1 + \frac{1}{K_c^2})$, there exist positive constants $C_1(p, \rho)$, $C_2(p, \rho)$ and $C_3(\rho)$ such that if $\|u_0\|_\rV \le \rho$, then
\begin{align}
\label{eq:5.5}
&\sup_{n \ge 1}\mathbb{E}\left(\sup_{r \in [0,T]} \|u_n(r)\|_{\mathrm{V}}^{2p}\right)  \le C_1(p, \rho),\\
\label{eq:5.6}
\sup_{n \ge 1} \E \int_0^T &\|\uns\|_{\mathrm{V}}^{2(p-1)} |\mathrm{A} \uns - |\nabla \uns|_{L^2}^2 \uns|_{\mathrm{H}}^2 ~ds \le C_2(p, \rho),
\end{align}
and
\begin{equation}
\label{eq:5.7}
\sup_{n \ge 1} \E \int_0^T |u_n(s)|_{\mathrm{D}(\mathrm{A})}^2 ~ds \le C_3(\rho).
\end{equation}
\end{lemma}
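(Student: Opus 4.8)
The plan is to obtain a single It\^o identity for $t\mapsto\|u_n(t)\|_\rV^{2p}$ and to close it by the Burkholder--Davis--Gundy (BDG) inequality and a Gronwall argument. The admissible range $p\in[1,1+K_c^{-2})$ will be forced by the competition between the viscous dissipation and the second-order It\^o contribution of the transport noise; the three assertions \eqref{eq:5.5}--\eqref{eq:5.7} will then come out of one and the same computation.

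First I would pass from the Stratonovich form \eqref{eq:5.3} to It\^o form, which adds the drift correction $\tfrac12\sum_{j=1}^m(P_nC_j)^2u_n$. Because $\{e_i\}$ diagonalises $\rA$, the projection $P_n$ commutes with $\rA$ and $\rA u_n\in\rH_n$, so pairing the drift with $\rA u_n$ is legitimate. Applying It\^o to $x\mapsto\|x\|_\rV^{2p}=\langle\rA x,x\rangle_\rH^{p}$ produces: the viscous term $-2p\|u_n\|_\rV^{2(p-1)}|\rA u_n|_\rH^2$; the nonlinear term, which vanishes since $\langle B(u_n),\rA u_n\rangle_\rH=0$ by \eqref{eq:2.4}; the constraint term $2p\|u_n\|_\rV^{2(p-1)}|\nabla u_n|_{L^2}^2\langle u_n,\rA u_n\rangle_\rH$; the noise drift corrections; and the martingale $2p\|u_n\|_\rV^{2(p-1)}\sum_j\langle\rA u_n,P_nC_ju_n\rangle_\rH\,dW_j$. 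The decisive simplification is Lemma~\ref{lemma5.3}: since $|u_n|_\rH=1$ we have $\langle u_n,\rA u_n\rangle_\rH=\|u_n\|_\rV^2=|\nabla u_n|_{L^2}^2$, so the viscous and constraint terms combine into the single nonnegative quantity $-2p\|u_n\|_\rV^{2(p-1)}\bigl|\rA u_n-|\nabla u_n|_{L^2}^2u_n\bigr|_\rH^2$, which is exactly the integrand of \eqref{eq:5.6} and will serve as the dissipation absorbing every remaining term.

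For the noise I would use the skew-symmetry $C_j^\ast=-C_j$ from \eqref{eq:2.6}, the commutation $\rA C_j=C_j\rA$ of Lemma~\ref{lemma5.4}, the contractivity of $P_n$, and the pointwise bound $|C_ju_n|_\rH\le K_c|\nabla u_n|_{L^2}$. These reduce the noise drift and the second-order It\^o term of the $p$-th power to an expression controlled by $(p-1)K_c^2\|u_n\|_\rV^{2(p-1)}|\rA u_n|_\rH^2$ up to lower-order terms; writing $|\rA u_n|_\rH^2=|\rA u_n-|\nabla u_n|_{L^2}^2u_n|_\rH^2+|\nabla u_n|_{L^2}^4$ I would absorb its tangential part into the dissipation and keep the remainder $(p-1)K_c^2\|u_n\|_\rV^{2p+2}$ for Gronwall. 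Taking $\sup_{[0,T]}$ and then expectation, BDG bounds the martingale by $CpK_c\,\E\bigl(\int_0^T\|u_n\|_\rV^{4p-2}|\rA u_n|_\rH^2\,ds\bigr)^{1/2}$, and a Young inequality splits this into $\tfrac12\E\sup_{[0,T]}\|u_n\|_\rV^{2p}$, absorbed on the left, plus a further weighted dissipation integral treated as above. The estimate closes precisely when the noise coefficient $(p-1)K_c^2$ of $\|u_n\|_\rV^{2(p-1)}|\rA u_n|_\rH^2$ stays below the dissipation coefficient $2p$, i.e. when $(p-1)K_c^2<1$, which is $p<1+K_c^{-2}$; Gronwall, together with $\|u_n(0)\|_\rV\le C\rho$ (from $\|P_nu_0\|_\rV\le\|u_0\|_\rV\le\rho$ and $|P_nu_0|_\rH\to1$), then yields \eqref{eq:5.5}. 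Estimate \eqref{eq:5.6} is read off from the time-integrated dissipation, and \eqref{eq:5.7} follows from $|u_n|_{\rD(\rA)}^2=1+|\rA u_n|_\rH^2$ by bounding $\int_0^T|\rA u_n|_\rH^2\,ds$ via \eqref{eq:5.6} with $p=1$ and $\int_0^T|\nabla u_n|_{L^2}^4\,ds\le T\sup_{[0,T]}\|u_n\|_\rV^4$ controlled by \eqref{eq:5.5}.

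I expect the main obstacle to be this absorption step: one has to track the constants in the second-order It\^o term and in the BDG bound carefully enough that the total noise contribution is strictly dominated by the dissipation, since it is exactly the sharpness of this comparison that produces the threshold $p<1+K_c^{-2}$ and makes essential use of the standing hypothesis $K_c^2<1$.
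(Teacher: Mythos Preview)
Your overall plan is sound and very close to the paper's, but the Gronwall step as you describe it does not close. After you split $|\rA u_n|_\rH^2=|\rA u_n-|\nabla u_n|_{L^2}^2u_n|_\rH^2+|\nabla u_n|_{L^2}^4$, the leftover term $(p-1)K_c^2\|u_n\|_\rV^{2p+2}$ carries a strictly \emph{higher} power of $\|u_n\|_\rV$ than the quantity $\E\sup_{[0,T]}\|u_n\|_\rV^{2p}$ you are trying to bound. No Gronwall inequality can close an estimate of the form $f(t)\le C+C\int_0^t f(s)^{1+1/p}\,ds$ globally in time, and localising by stopping times $\tau_R=\inf\{t:\|u_n\|_\rV>R\}$ only yields bounds of order $e^{CR^2T}$, useless for letting $R\to\infty$. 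The same obstruction reappears in your BDG remainder, which again involves $|\rA u_n|_\rH^2$ rather than the tangential quantity.

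The missing observation --- which the paper exploits systematically and which makes Gronwall unnecessary --- is that the skew-symmetry $\langle C_j u_n,u_n\rangle_\rH=0$ you already cite allows the subtraction to be done \emph{inside the inner product}: $\langle \rA u_n,C_j u_n\rangle_\rH=\langle \rA u_n-|\nabla u_n|_{L^2}^2 u_n,\,C_j u_n\rangle_\rH$. Applied both to the second-order It\^o term and to the martingale before BDG, this replaces $|\rA u_n|_\rH$ by $|\rA u_n-|\nabla u_n|_{L^2}^2 u_n|_\rH$ throughout, so the entire noise contribution is bounded by $(p-1)K_c^2\,\|u_n\|_\rV^{2(p-1)}|\rA u_n-|\nabla u_n|_{L^2}^2 u_n|_\rH^2$ and is absorbed \emph{completely} by the dissipation whenever $(p-1)K_c^2<1$, with no remainder. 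The paper in fact proceeds in two passes: it first takes expectations (the martingale has mean zero) to obtain \eqref{eq:5.6} directly, and only afterwards uses BDG together with the already-established bound \eqref{eq:5.6} to upgrade to $\E\sup$; no Gronwall enters at any point. With this one fix the rest of your outline, including the derivation of \eqref{eq:5.7} from the cases $p=1$ and $p=2$, goes through.
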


\begin{proof}
Let $u_n(t)$ be the solution of \eqref{eq:5.3} then applying the It\^o formula to $\phi(x) = \|x\|_{\mathrm{V}}^2$ and the process $u_n(t)$, we get
\begin{align*}
d\|u_n(t)\|_{\mathrm{V}}^2 & = 2 \langle \mathrm{A} u_n(t), - P_n \mathrm{A} u_n(t) - P_n B(u_n(t), u_n(t)) + |\nabla u_n(t)|_{L^2}^2 u_n(t)\rangle_{\mathrm{H}} dt \\
&~~~\sum_{j=1}^m \langle \mathrm{A} \un, (P_n C_j)^2 u_n(t) \rangle_{\mathrm{H}} dt + \sum_{j=1}^m \langle \mathrm{A} P_n C_j u_n(t), P_n C_j u_n(t) \rangle_{\mathrm{H}} dt \\
&~~~ + 2 \sum_{j=1}^m \langle \mathrm{A} u_n(t), P_n C_j u_n(t)\, dW_j(t) \rangle_{\mathrm{H}}.
\end{align*}
Now since $\langle |\nabla \un|_{L^2}^2 \un, \mathrm{A} \un - |\nabla \un|_{L^2}^2 \un \rangle = 0$, using \eqref{eq:2.4}, we have
\begin{align*}
d\|u_n(t)\|^2_{\mathrm{V}} & = -2 \langle \mathrm{A} \un - |\nabla \un|^2_{L^2} \un, \mathrm{A} \un - |\nabla \un|^2 \un \rangle_{\mathrm{H}} dt  \\
&~~~ + 2 \langle |\nabla \un|^2 \un, \mathrm{A} \un - |\nabla \un|_{L^2}^2 \un \rangle_{\mathrm{H}} dt  \\
&~~~ - 2 \langle \mathrm{A} \un, B(\un, \un) \rangle_{\mathrm{H}} dt + \sum_{j=1}^m \langle \mathrm{A} u_n(t), C_j^2 u_n(t) \rangle_{\mathrm{H}} dt\\
&~~~  + \sum_{j=1}^m\langle \mathrm{A} C_j u_n(t), C_j u_n(t) \rangle_{\mathrm{H}} dt  + 2 \sum_{j=1}^m \langle \rm{A} u_n(t), C_j u_n(t)\, dW_j(t) \rangle _{\mathrm{H}} \\
& = - 2 | \mathrm{A} \un - |\nabla \un|_{L^2}^2 \un|_\rH^2 dt +2 \sum_{j=1}^m \langle \mathrm{A} \un, C_j \un\, dW_j(t)\rangle_\rH \\
&~~~ + \sum_{j=1}^m\langle \mathrm{A} C_j \un - C_j \mathrm{A} \un, C_j \un \rangle_\rH\, dt .
\end{align*}
Integrating on both sides and using Assumption \textbf{(A.1)}  and  Lemma~\ref{lemma5.4}, we get
\begin{align}
\label{eq:5.8}
&\| \un \|^2_{\mathrm{V}} + 2 \int_0^t |\mathrm{A} u_n(s) - |\nabla u_n(s)|_{L^2}^2 u_n(s)|_{\mathrm{H}}^2\,ds   \nonumber \\
& = \|u_n(0)\|_{\mathrm{V}}^2  + 2 \sum_{j=1}^m \int_0^t \langle \mathrm{A} u_n(s), C_j u_n(s)\, dW_j(s) \rangle_{\mathrm{H}} \\
& \leq \|u(0)\|_{\mathrm{V}}^2 + 2\sum_{j=1}^m \int_0^t \langle \mathrm{A} u_n(s), C_j u_n(s)\, dW_j(s) \rangle_{\mathrm{H}} \nonumber \,.
\end{align}
By Lemma~\ref{lemma5.2}, we infer that the process
\[\mu_n(t) = \sum_{j=1}^m \int_0^t \langle \mathrm{A} u_n(s), C_j u_n(s)\, dW_j(s) \rangle_{\mathrm{H}}, \quad t \in [0,T]\]
is a martingale and that $\E [ \mu_n(t)] = 0$. Thus
\begin{align}
\label{eq:5.10}
\mathbb{E} \|u_n(t)\|_{\mathrm{V}}^2 + 2 \mathbb{E}\int_0^t | \mathrm{A} u_n(s) - |\nabla u_n(s)|_{L^2}^2 u_n(s) |_{\mathrm{H}}^2~ds  \leq \mathbb{E}\|u(0)\|_{\mathrm{V}}^2.
\end{align}
Hence
\begin{equation}
\label{eq:5.11}
\sup_{n \ge 1} \sup_{t \in [0,T]} \E \|u_n(t)\|^2_{\mathrm{V}} \le \E \|u(0)\|^2_{\mathrm{V}}.
\end{equation}
Note that using (\ref{eq:5.11}) in (\ref{eq:5.10}), we also have the following estimate
\begin{equation}
\label{eq:5.12}
\sup_{n \ge 1}\mathbb{E}\int_0^T |\mathrm{A} u_n(s) - |\nabla u_n(s)|_{L^2}^2 u_n(s)|_{\mathrm{H}}^2~ds \leq  \E \|u(0)\|^2_{\mathrm{V}}.
\end{equation}

Let $\xi(t) = \|u_n(t)\|_{\mathrm{V}}^2$, $t\in [0,T]$ and  $\phi(x) = x^p$, for some fixed $p \in [1, \infty)$. Using the It\^o formula and (\ref{eq:5.8}), we obtain
\begin{align}
\label{eq:5.13}
\|u_n(t)\|_{\mathrm{V}}^{2p} & = \|u_n(0)\|_{\mathrm{V}}^{2p} - 2 p \int_0^t \|\uns\|_{\mathrm{V}}^{2(p-1)} |\mathrm{A} \uns - |\nabla \uns|_{L^2}^2 \uns|_{\mathrm{H}}^2 ds \nonumber \\
&~~~ + 2p(p-1) \sum_{j=1}^m \int_0^t \|\uns\|_{\mathrm{V}}^{2(p-2)} \langle \mathrm{A} \uns, C_j \uns \rangle_{\mathrm{H}}^2~ds \nonumber \\
&~~~ + 2p \sum_{j=1}^m  \int_0^t \|\uns\|_{\mathrm{V}}^{2(p-1)} \langle \mathrm{A} \uns, C_j \uns \,dW_j(s) \rangle_{\mathrm{H}}
\end{align}
Since $C$ is skew symmetric, $\langle C \uns, \uns \rangle = 0$ and hence we get
\begin{align*}
\|u_n(t)\|_{\mathrm{V}}^{2p} & + 2 p \int_0^t \|\uns\|_{\mathrm{V}}^{2(p-1)} |\mathrm{A} \uns - |\nabla \uns|_{L^2}^2 \uns|_{\mathrm{H}}^2 ds \\
& \le \|u_n(0)\|_{\mathrm{V}}^{2p} + 2p \sum_{j=1}^m \int_0^t \|\uns\|_{\mathrm{V}}^{2(p-1)} \langle \mathrm{A} \uns, C_j \uns \,dW_j(s) \rangle_{\mathrm{H}} \\
&~~ + 2p(p-1) \sum_{j=1}^m \int_0^t \|\uns\|_{\mathrm{V}}^{2(p-2)} \langle \mathrm{A} \uns - |\nabla \uns|^2_{L^2} \uns, C_j \uns \rangle_{\mathrm{H}}^2~ds.
\end{align*}
Using the H\"older inequality we have
\begin{align*}
& \|u_n(t)\|_{\mathrm{V}}^{2p} + 2 p \int_0^t \|\uns\|_{\mathrm{V}}^{2(p-1)} |\mathrm{A} \uns - |\nabla \uns|_{L^2}^2 \uns|_{\mathrm{H}}^2 ds\\
&\; \leq \|u_n(0)\|_{\mathrm{V}}^{2p} + 2p \sum_{j=1}^m \int_0^t \|\uns\|_{\mathrm{V}}^{2(p-1)} \langle \mathrm{A} \uns, C_j \uns\,dW_j(s) \rangle_{\mathrm{H}} \\
&\;~~~ + 2p(p-1) \sum_{j=1}^m \int_0^t \|\uns\|_{\mathrm{V}}^{2(p-2)} | \mathrm{A} \uns - |\nabla \uns|_{L^2}^2 \uns|_{\mathrm{H}}^2 |C_j \uns|_{\mathrm{H}}^2\,ds.
\end{align*}
On rearranging we get
\begin{align*}
&\|u_n(t)\|_{\mathrm{V}}^{2p}  + 2 p \int_0^t \|\uns\|_{\mathrm{V}}^{2(p-1)} |\mathrm{A} \uns - |\nabla \uns|_{L^2}^2 \uns|_{\mathrm{H}}^2 ds\\
&\; \leq \|u_n(0)\|_{\mathrm{V}}^{2p} + 2p \sum_{j=1}^m \int_0^t \|\uns\|^{2(p-1)} \langle \mathrm{A} \uns, C_j \uns \,dW_j(s)\rangle_{\mathrm{H}}\\
&\;~~~ + 2p(p-1) K_c^2 \int_0^t \|\uns\|_{\mathrm{V}}^{2(p-1)} | \mathrm{A} \uns - |\nabla \uns|_{L^2}^2 \uns|_{\mathrm{H}}^2\,ds ,
\end{align*}
where $K_c$ is the positive constant defined in equality \eqref{eqn-K_c}.\\ For $p \in [1,1+\frac{1}{K_c^2})$, $K_p = 2p \left[ 1 - K_c^2 (p-1) \right] > 0$, thus
\begin{align}
\label{eq:5.14}
\|u_n(t)\|_{\mathrm{V}}^{2p} & + K_{p} \int_0^t \|\uns\|_{\mathrm{V}}^{2(p-1)} |\mathrm{A} \uns - |\nabla \uns|_{L^2}^2 \uns|_{\mathrm{H}}^2 ds \nonumber \\
&\leq \|u_n(0)\|_{\mathrm{V}}^{2p} + 2p \sum_{j=1}^m \int_0^t \|\uns\|_{\mathrm{V}}^{2(p-1)} \langle \mathrm{A} \uns, C_j \uns\,dW_j(s) \rangle_{\mathrm{H}}.
\end{align}
Using Lemma~\ref{lemma5.2} we infer that the process
\[\eta_n(t) = \sum_{j=1}^m \int_0^t \|\uns\|_{\mathrm{V}}^{2(p-1)} \langle \mathrm{A} \uns, C_j \uns \,dW_j(s) \rangle_{\mathrm{H}}, \quad t \in [0,T],\]
is a martingale and $\E[\eta_n(t)] = 0$. Thus
\begin{align}
\label{eq:5.15}
 \E \|u_n(t)\|_{\mathrm{V}}^{2p} & + K_p \E \int_0^t \|\uns\|_{\mathrm{V}}^{2(p-1)} |\mathrm{A} \uns - |\nabla \uns|^2_{L^2} \uns|_{\mathrm{H}}^2 \,ds & \leq \E \|u_n(0)\|_{\mathrm{V}}^{2p}.
\end{align}
In particular
\begin{equation}
\label{eq:5.17}
\sup_{n \ge 1} \sup_{t \in [0,T]} \E \|u_n(t)\|^{2p} \le \E \|u_0\|_{\mathrm{V}}^{2p}
\end{equation}
Note that using \eqref{eq:5.17} in \eqref{eq:5.15}, we also have the following estimate,
\begin{equation}
\label{eq:5.18}
\sup_{n \ge 1} \E \int_0^T \|\uns\|_{\mathrm{V}}^{2(p-1)} |\mathrm{A} \uns - |\nabla \uns|_{L^2}^2 \uns|_{\mathrm{H}}^2 ~ds \le \frac{1}{K_p}\E \|u_0\|_{\mathrm{V}}^{2p}.
\end{equation}

In order to prove \eqref{eq:5.5} we start from \eqref{eq:5.13},
\begin{align*}
\|u_n(t)\|_{\mathrm{V}}^{2p} & = \|u_n(0)\|_{\mathrm{V}}^{2p} - 2 p \int_0^t \|\uns\|_{\mathrm{V}}^{2(p-1)} |\mathrm{A} \uns - |\nabla \uns|_{L^2}^2 \uns|_{\mathrm{H}}^2 ds \nonumber \\
&~~~ + 2p(p-1) \sum_{j=1}^m \int_0^t \|\uns\|_{\mathrm{V}}^{2(p-2)} \langle \mathrm{A} \uns, C_j \uns \rangle_{\mathrm{H}}^2\,ds \nonumber \\
&~~~ + 2p \sum_{j=1}^m \int_0^t \|\uns\|_{\mathrm{V}}^{2(p-1)} \langle \mathrm{A} \uns, C_j \uns\,dW_j(s) \rangle_{\mathrm{H}}.
\end{align*}
Since for every $j \in \{1, \cdots, m\}$, $\langle C_j \uns, \uns \rangle_{\mathrm{H}} = 0$, hence
\begin{align*}
\|u_n(t)\|_{\mathrm{V}}^{2p} & + 2 p \int_0^t \|\uns\|_{\mathrm{V}}^{2(p-1)} |\mathrm{A} \uns - |\nabla \uns|^2_{L^2} \uns|_{\mathrm{H}}^2\,ds = \|u_n(0)\|_{\mathrm{V}}^{2p}  \nonumber \\
&~~~ + 2p(p-1) \sum_{j=1}^m \int_0^t \|\uns\|_{\mathrm{V}}^{2(p-2)} \langle \mathrm{A} \uns - |\nabla \uns|_{L^2}^2 \uns, C_j \uns \rangle_{\mathrm{H}}^2~ds \nonumber \\
&~~~ + 2p \sum_{j=1}^m \int_0^t \|\uns\|_{\mathrm{V}}^{2(p-1)} \langle \mathrm{A} \uns - |\nabla \uns|^2_{L^2} \uns, C_j \uns\,dW_j(s) \rangle_{\mathrm{H}}.
\end{align*}
Taking the mathematical expectation and using the H\"older inequality, we have
\begin{align}
\label{eq:5.19}
&\mathbb{E} \sup_{r \in [0,t]}  \|u_n(r)\|_{\mathrm{V}}^{2p} + 2p\Eb \sup_{r \in [0,t]} \int_0^r \|\uns\|_{\mathrm{V}}^{2(p-1)} |\mathrm{A} \uns - |\nabla \uns|_{L^2}^2 \uns|_{\mathrm{H}}^2\,ds  \nonumber \\
&\leq \mathbb{E} \|u_n(0)\|_{\mathrm{V}}^{2p} + 2p(p-1)K_c^2 \Eb \sup_{r \in [0,t]} \left[ \int_0^r \|\uns\|^{2(p-2)} | \mathrm{A} \uns - |\nabla \uns|_{L^2}^2 \uns|_{\mathrm{H}}^2  |\nabla \uns |_{L^2}^2\,ds \right] \nonumber \\
&~~ + 2p \Eb \sup_{r \in [0,t]} \left[ \sum_{j=1}^m \int_0^t \|\uns\|_{\mathrm{V}}^{2(p-1)} \langle \mathrm{A} \uns - |\nabla \uns|_{L^2}^2 \uns, C_j \uns\,dW_j(s) \rangle_{\mathrm{H}} \right].
\end{align}
Using the Burkholder- Davis- Gundy inequality, we get
\begin{align*}
\mathbb{E} \sup_{r \in [0,t]} &\left|\sum_{j=1}^m \int_0^r\|\uns\|_{\mathrm{V}}^{2(p-1)} \langle \mathrm{A} \uns - |\nabla \uns |^2_{L^2} \uns, C_j \uns\,dW_j(s) \rangle_{\mathrm{H}} \right| \\
& \leq 3 \Eb \left| \sum_{j=1}^m \int_0^t \|\uns\|_{\mathrm{V}}^{4(p-1)} \langle \mathrm{A} \uns - |\nabla \uns|_{L^2}^2 \uns, C_j \uns \rangle_{\mathrm{H}}^2\,ds \right|^{1/2} \\
& \leq 3 \Eb \left| \sum_{j=1}^m \int_0^t \|\uns\|_{\mathrm{V}}^{4(p-1)} |\mathrm{A} \uns - |\nabla \uns |^2_{L^2} \uns|_{\mathrm{H}}^2 |C_j \uns|_{\mathrm{H}}^2~ds \right|^{1/2}\\
&\leq 3 \Eb K_c \left[ \int_0^t \|\uns\|_{\mathrm{V}}^{2p} \|\uns\|_{\mathrm{V}}^{2(p-1)} |\mathrm{A} \uns - |\nabla \uns |^2_{L^2} \uns|_{\mathrm{H}}^2 ~ds \right]^{1/2}.
\end{align*}
Using the H\"older inequality and the Young inequality, we obtain
\begin{align*}
\mathbb{E} & \sup_{r \in [0,t]} \left|\sum_{j=1}^m \int_0^r\|\uns\|_{\mathrm{V}}^{2(p-1)} \langle \mathrm{A} \uns - |\nabla \uns |_{L^2}^2 \uns, C_j \uns\,dW_j(s) \rangle_{\mathrm{H}} \right| \\
& \leq 3 \Eb \left[ K_c \left(\sup_{r \in [0,t]} \|u_n(r)\|_{\mathrm{V}}^{2p} \right)^{1/2} \left(\int_0^t \|\uns\|_{\mathrm{V}}^{2(p-1)}|\mathrm{A} \uns - |\nabla \uns |_{L^2}^2 \uns |_{\mathrm{H}}^2~ds \right)^{1/2} \right]\\
& \leq 3 \Eb \left[ \varepsilon \sup_{r \in [0,t]} \|u_n(r)\|_{\mathrm{V}}^{2p} + \dfrac{K_c^2}{4 \varepsilon} \int_0^t \|\uns\|_{\mathrm{V}}^{2(p-1)} |\mathrm{A} \uns - |\nabla \uns |_{L^2}^2 \uns |_{\mathrm{H}}^2 ~ds \right].
\end{align*}
Thus using this in \eqref{eq:5.19}, we get
\begin{align}
\label{eq:5.20}
\Eb \sup_{r \in [0,t]} & \|u_n(r)\|_{\mathrm{V}}^{2p}  + 2 \mathbb{E} \sup_{r \in [0,t]} \int_0^r \|\uns\|_{\mathrm{V}}^{2(p-1)} |\mathrm{A} \uns - |\nabla \uns|_{L^2}^2 \uns|_{\mathrm{H}}^2~ds \nonumber \\
& \leq \mathbb{E} \|u_n(0)\|_{\mathrm{V}}^{2p} + 2p(p-1) K_c^2 \Eb \sup_{r \in [0,t]} \int_0^r \|\uns\|_{\mathrm{V}}^{2(p-1)} | \mathrm{A} \uns - |\nabla \uns |_{L^2}^2 \uns|_{\mathrm{H}}^2 ~ds \nonumber \\
&~~~  + \dfrac{3 p K_c^2}{2 \varepsilon} \Eb\int_0^t \|\uns\|_{\mathrm{V}}^{2(p-1)}|\mathrm{A} \uns - |\nabla \uns |^2_{L^2} \uns |_{\mathrm{H}}^2 \,ds
\end{align}
Hence for $\varepsilon = \frac{1}{12p}$, Eq. \eqref{eq:5.20} reduces to
\begin{align*}
\Eb \sup_{r \in [0,t]}& \|u_n(r)\|_{\mathrm{V}}^{2p}  + 4 \mathbb{E} \sup_{r \in [0,t]} \int_0^r \|\uns\|_{\mathrm{V}}^{2(p-1)} |\mathrm{A} \uns - |\nabla \uns|_{L^2}^2 \uns|_{\mathrm{H}}^2~ds \leq 2 \mathbb{E} \|u_n(0)\|_{\mathrm{V}}^{2p}\\
&~~~ + 4p(p-1)K_c^2 \Eb \sup_{r \in [0,t]} \int_0^r \|\uns\|_{\mathrm{V}}^{2(p-1)} | \mathrm{A} \uns - |\nabla \uns |_{L^2}^2 \uns|_{\mathrm{H}}^2 ~ds\\
&~~~ + 36p^2 K_c^2 \Eb \int_0^t \|\uns\|_{\mathrm{V}}^{2(p-1)} | \mathrm{A} \uns - |\nabla \uns|_{L^2}^2 \uns|_{\mathrm{H}}^2 ~ds.
\end{align*}
Since $\int_0^r |\mathrm{A} \uns - |\nabla \uns |_{L^2}^2 \uns |_\rH^2 ~ds$ is an increasing function, we have
\begin{align*}
\Eb & \sup_{r \in [0,t]}  \|u_n(r)\|_{\mathrm{V}}^{2p}  + 4 \mathbb{E} \int_0^t \|\uns\|_{\mathrm{V}}^{2(p-1)} |\mathrm{A} \uns - |\nabla \uns|_{L^2}^2 \uns|_{\mathrm{H}}^2~ds\\
& \leq 2 \mathbb{E} \|u_n(0)\|_{\mathrm{V}}^{2p} + 4pK_c^2 \left[10p -1\right]\Eb \int_0^t \|\uns\|_{\mathrm{V}}^{2(p-1)} | \mathrm{A} \uns - |\nabla \uns|^2_{L^2} \uns|_{\mathrm{H}}^2 ~ds.
\end{align*}
In particular
\begin{align*}
\Eb  \sup_{r \in [0,t]}  \|u_n(r)\|_{\mathrm{V}}^{2p}  \le & 4 p K_c^2 \left[10p -1\right] \Eb\int_0^t \|\uns\|_{\mathrm{V}}^{2(p-1)} |\mathrm{A} \uns - |\nabla \uns |_{L^2}^2 \uns |_{\mathrm{H}}^2 ~ds  \\
&~~~+ 2 \mathbb{E} \|u_n(0)\|_{\mathrm{V}}^{2p}\,.
\end{align*}
Since $\Eb \|u_n(0)\|_{\mathrm{V}}^{2p} \le \E \|u_0\|^{2p}_{\mathrm{V}}$ and using \eqref{eq:5.18}, for $p \in [1, 1 + \frac{1}{K_c^2})$
\[\Eb \int_0^T \|\uns\|_{\mathrm{V}}^{2(p-1)} |\mathrm{A} \uns - |\nabla \uns |_{L^2}^2 \uns|_{\mathrm{H}}^2\,ds\]
is uniformly bounded in $n$, thus
\[\sup_{n \ge 1}\Eb \sup_{r \in [0,T]} \|u_n(r)\|_{\mathrm{V}}^{2p} \leq C_1(p, \rho).\]

Now we will establish \eqref{eq:5.7}. Note that
\[
\Eb\int_0^T |\uns|^2_{\mathrm{D}(\mathrm{A})} ds = \Eb\int_0^T |\mathrm{A} \uns - |\nabla \uns |_{L^2}^2 \uns |_{\mathrm{H}}^2\,ds + \E \int_0^T \|\uns\|^4\, ds. \]
Using \eqref{eq:5.5} for $p=2$ and \eqref{eq:5.6} for $p = 1$, we get
\[ \sup_{n \ge 1} \Eb \int_0^T |\uns|^2_{\mathrm{D}(\mathrm{A})} ds \leq C_2(1,\rho) + C_1(2, \rho)T =: C_3(\rho). \]
\end{proof}

\subsection{Tightness}
\label{s:5.2}
In this subsection using the a'priori estimates from the Lemma~\ref{lemma5.5} and the Corollary~\ref{cor3.6} we will prove that for every $n \in \N$ the measures $\mathcal{L}(u_n)$ on $(\mathcal{Z}_T, \mathcal{T})$ defined by the solution of the stochastic ODE \eqref{eq:5.3}
are tight. The following is the main result of this subsection.

\begin{lemma}
\label{lemma5.6}
The set of measures $\left\{\mathcal{L}(u_n), n \in \mathbb{N}\right\}$ is tight on $(\mathcal{Z}_T, \mathcal{T})$.
\end{lemma}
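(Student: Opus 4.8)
The plan is to verify the three hypotheses of the tightness criterion in Corollary~\ref{cor3.6} for the sequence $(u_n)_{n \in \N}$ and then invoke it directly. Conditions (a) and (b) of that corollary are already in hand from Lemma~\ref{lemma5.5}: taking $p=1$ in \eqref{eq:5.5} gives $\sup_{n} \E\big[\sup_{s \in [0,T]} \|u_n(s)\|_\rV^2\big] \le C_1(1,\rho)$, which is (a), while \eqref{eq:5.7} gives $\sup_n \E\big[\int_0^T |u_n(s)|^2_{\rD(\mathrm{A})}\,ds\big] \le C_3(\rho)$, which is (b). The entire burden of the proof is therefore condition (c), the Aldous condition $[\mathbf{A}]$ (Definition~\ref{defn3.4}) in $\rH$.

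To establish $[\mathbf{A}]$, I would first rewrite \eqref{eq:5.3} in It\^o form (the Stratonovich--It\^o correction contributing $\tfrac12\sum_j (P_nC_j)^2u_n$, exactly as in \eqref{eq:6.2}) and split $u_n$ into its initial value plus five integral contributions,
\begin{align*}
u_n(t) &= u_n(0) - \int_0^t P_n \mathrm{A} u_n(s)\,ds - \int_0^t P_n B(u_n(s))\,ds + \int_0^t |\nabla u_n(s)|_{L^2}^2 u_n(s)\,ds \\
&\quad + \tfrac12 \int_0^t \sum_{j=1}^m (P_n C_j)^2 u_n(s)\,ds + \int_0^t \sum_{j=1}^m P_n C_j u_n(s)\,dW_j(s),
\end{align*}
which I denote $u_n(0) + \Jn{1}(t) + \cdots + \Jn{5}(t)$ in the order written. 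Fix an arbitrary sequence of $\mathbb{F}$-stopping times $\tau_n \le T$ and $0 \le \theta \le \delta$; the goal is to bound each increment $\Jn{i}(\tau_n + \theta) - \Jn{i}(\tau_n)$ in $\rH$ by $C\theta^{\alpha_i}$ (in first or second moment), uniformly in $n$, using only (a) and (b).

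For the Stokes term $\Jn{1}$ and the It\^o-correction term $\Jn{4}$, which map into $\rH$ through $\rD(\mathrm{A})$ (since $|P_n\mathrm{A} u_n|_\rH \le |u_n|_{\rD(\mathrm{A})}$ and $|(P_nC_j)^2u_n|_\rH \le C K_c^2 |u_n|_{\rD(\mathrm{A})}$), the Cauchy--Schwarz inequality in time together with (b) yields $\E[\,\cdot\,] \lesssim \theta^{1/2}$. For the constraint term $\Jn{3}$, the identity $|u_n|_\rH = 1$ on $\mathcal{M}$ (Lemma~\ref{lemma5.3}) reduces its integrand's $\rH$-norm to $\|u_n\|_\rV^2$, giving $\E[\,\cdot\,] \lesssim \theta$ via (a). For the nonlinear term $\Jn{2}$, I would apply \eqref{eq:2.3} with $u=\v=u_n$ and $|u_n|_\rH=1$ to get $|B(u_n)|_\rH \le \sqrt{2}\,\|u_n\|_\rV |u_n|^{1/2}_{\rD(\mathrm{A})}$, then split the time integral by H\"older with exponents $\tfrac43$ and $4$ and take expectation with Cauchy--Schwarz in $\omega$, producing $\E[\,\cdot\,] \lesssim \theta^{3/4}$ from (a) and (b). For the stochastic term $\Jn{5}$, I would write the increment as $\int_0^T \mathbf{1}_{[\tau_n,\tau_n+\theta]}(s)\sum_j P_nC_ju_n(s)\,dW_j(s)$ and apply the It\^o isometry, using $|P_nC_ju_n|_\rH \le K_c\|u_n\|_\rV$ to obtain $\E|\Jn{5}(\tau_n+\theta)-\Jn{5}(\tau_n)|^2_\rH \le m K_c^2\,\theta\,\E\sup_s\|u_n\|_\rV^2 \lesssim \theta$.

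Combining these via the triangle inequality and the Chebyshev inequality (first moment for $\Jn{1}$--$\Jn{4}$, second moment for $\Jn{5}$) bounds $\sup_n \p\{|u_n(\tau_n+\theta)-u_n(\tau_n)|_\rH \ge \eta\}$ by a finite sum of terms of the form $C\theta^{\alpha}/\eta^{\beta}$ with $\alpha,\beta>0$; choosing $\delta$ small then makes this $\le \varepsilon$, verifying $[\mathbf{A}]$, and Corollary~\ref{cor3.6} concludes tightness on $(\mathcal{Z}_T,\mathcal{T})$. The main obstacle I anticipate is the nonlinear term: obtaining a time-increment bound uniform in $n$ hinges on the sharp interpolation estimate \eqref{eq:2.3}, so that $B(u_n)$ lands in $\rH$ with a controllable $\rD(\mathrm{A})$-dependence, together with a careful choice of H\"older exponents so that only the two available uniform bounds (a) and (b) are consumed. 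The stochastic term is conceptually routine but requires the care of expressing the increment over the random interval $[\tau_n,\tau_n+\theta]$ through an indicator so that the It\^o isometry applies cleanly to a genuinely stopped integral.
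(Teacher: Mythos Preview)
Your proposal is correct and follows essentially the same approach as the paper's own proof: both reduce to Corollary~\ref{cor3.6}, dispatch (a) and (b) via Lemma~\ref{lemma5.5}, and verify the Aldous condition in $\rH$ by decomposing $u_n$ into the same five integral terms (Stokes, nonlinear, constraint, It\^o correction, stochastic) with the same increment bounds $\theta^{1/2},\,\theta^{3/4},\,\theta,\,\theta^{1/2},\,\theta$ respectively, followed by Chebyshev. The only cosmetic differences are the numbering of the terms and your explicit use of $|u_n|_\rH=1$ in the nonlinear estimate where the paper instead absorbs $|u_n|_\rH^{1/2}$ into a power of $\|u_n\|_\rV$.
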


\begin{proof}
We apply Corollary~\ref{cor3.6}. According to the a'priori estimates \eqref{eq:5.5} (for $p =1$) and \eqref{eq:5.7}, conditions $(a)$ and $(b)$ of Corollary~\ref{cor3.6} are satisfied. Thus it is sufficient to prove that the sequence $(u_n)_{n \in \mathbb{N}}$ satisfies the Aldous condition $[\mathbf{A}]$ in $\rH$. Let $(\tau_n)_{n \in \mathbb{N}}$ be a sequence of stopping times such that $0 \leq \tau_n \leq T$. By \eqref{eq:5.3}, for $t \in [0,T]$ we have
\begin{align*}
\un &= u_n(0) - \int_0^t P_n \mathrm{A} \uns\,ds - \int_0^t P_n B (\uns)\,ds + \int_0^t |\nabla \uns|_{L^2}^2 \uns\,ds \\
&~~~ + \dfrac{1}{2} \int_0^t (P_nC)^2 \uns\,ds + \int_0^t P_n C \uns\,dW(s)\\
&:= J_1^n + J_2^n(t) + J_3^n(t) +J_4^n(t) + J_5^n(t) + J_6^n(t),\quad t \in [0,T].
\end{align*}
Let $\theta > 0$. First we make some estimates for each term of the above equality.\\

\noindent \textbf{Ad.}$J_2^n$. Since $\mathrm{A} : \mathrm{D}(\mathrm{A}) \rightarrow \rH$, then by the H\"older inequality and \eqref{eq:5.7}, we have the following estimates
\begin{align}
\label{eq:5.21}
\Eb &\left[ |J_2^n(\tau_n + \theta) - J_2^n(\tau_n)|_{\mathrm{H}} \right] = \Eb \left| \int_{\tau_n}^{\tau_n + \theta} P_n \mathrm{A} \uns~ds \right|_{\mathrm{H}} \leq c\Eb \int_{\tau_n}^{\tau_n + \theta} |\mathrm{A}\uns|_{\mathrm{H}}~ds \nonumber \\
& \leq c\Eb \int_{\tau_n}^{\tau_n + \theta} |\uns|_{\mathrm{D}(\mathrm{A})}~ds \leq c\theta^{\frac12} \left(\E\left[ \int_0^T |\uns|^2_{\rD(\mathrm{A})}\,ds \right]\right)^{\frac12} \leq c C_3^\frac12 \cdot \theta^{\frac12} =: c_2 \cdot \theta^{\frac12}.
\end{align}

\noindent  \textbf{Ad.} $J^n_3$. Since $B: \rV \times \rV \rightarrow \rH$ is bilinear and continuous, then using \eqref{eq:2.3}, the Cauchy-Schwarz inequality, \eqref{eq:5.5} and \eqref{eq:5.7}, we have the following estimates
\begin{align}
\label{eq:5.22}
\Eb  [ | J^n_3(\tau_n &+ \theta)  - J^n_3(\tau_n)|_{\mathrm{H}}] = \Eb \left|\int_{\tau_n}^{\tau_n + \theta} P_n B(\uns)\,ds \right|_{\mathrm{H}}  \leq c \Eb \int_{\tau_n}^{\tau_n + \theta} |B(\uns, \uns)|_{\mathrm{H}}\,ds \nonumber \\
& \leq c \Eb \int_{\tau_n}^{\tau_n + \theta} |\uns|^{\frac12}_{\mathrm{H}} \|\uns\|_{\mathrm{V}} |\uns|_{\mathrm{D}(\mathrm{A})}^{\frac12}~ds \leq c\Eb \left[ \int_{\tau_n}^{\tau_n + \theta} \|\uns\|^{\frac32}_{\mathrm{V}} |\uns|_{\mathrm{D}(\mathrm{A})}^{1/2}~ds\right]\nonumber \\
& \leq c\Eb \left(\left[\int_{\tau_n}^{\tau_n + \theta} \|\uns\|_\rV^2\,ds \right]^{\frac34} \left[ \int_{\tau_n}^{\tau_n + \theta} |\uns|^2_{\mathrm{D}(\mathrm{A})}\,ds \right]^{\frac14}\right)\nonumber \\
& \leq c \theta^{\frac34} \left[\E \sup_{s \in [0,T]}\|\uns\|^2_{\mathrm{V}} \right]^{\frac34} \left[ \Eb \int_{0}^{T} |\uns|^2_{\mathrm{D}(\mathrm{A})}\,ds \right]^{\frac14} \leq c C_1(1)^{\frac34} C_3^{\frac14}\cdot \theta^{\frac34} =: c_3 \cdot \theta^{\frac34}.
\end{align}

\noindent \textbf{Ad.} $J^n_4$. Using Lemma~\ref{lemma5.3} and estimate \eqref{eq:5.5}, we have
\begin{align}
\label{eq:5.23}
\Eb  [|J^n_4(\tau_n + \theta) & - J^n_4(\tau_n)|_{\mathrm{H}}] = \Eb \left| \int_{\tau_n}^{\tau_n + \theta} |\nabla \uns|_{L^2}^2 \uns~ds \right|_{\mathrm{H}} \nonumber \\
& \leq \Eb \int_{\tau_n}^{\tau_n + \theta} |\nabla \uns|_{L^2}^2 |\uns|_{\mathrm{H}}~ds \leq \Eb \sup_{s \in [0,T]} \|\uns\|_\rV^2 \,\theta \leq C_1(1) \cdot \theta =: c_4 \cdot \theta.
\end{align}

\noindent \textbf{Ad.} $J^n_5$. Since $C$ is linear and continuous, then using the Cauchy-Schwarz inequality, Assumption $(A.1)$ and \eqref{eq:5.7}, we have the following
\begin{align}
\label{eq:5.24}
\Eb [ | J^n_5(\tau_n + \theta) & - J^n_5(\tau_n)|_{\mathrm{H}} ]  = \Eb \left| \dfrac{1}{2} \sum_{j=1}^m \int_{\tau_n}^{\tau_n + \theta} (P_n C_j)^2 \uns \,ds \right|_{\mathrm{H}}  \nonumber \\
& \leq \dfrac{1}{2} c \Eb \left( \sum_{j=1}^m \int_{\tau_n}^{\tau_n + \theta} |C_j^2 \uns|_{\mathrm{H}}\,ds \right) \leq \dfrac{1}{2} c K_c^2  \Eb\int_{\tau_n}^{\tau_n + \theta} |\uns|_{\mathrm{D}(\mathrm{A})}~ds  \nonumber \\
& \leq \dfrac{1}{2} c K_c^2 \left[ \Eb \int_{0}^{T} |\uns|^2_{\mathrm{D}(\mathrm{A})}\,ds \right]^{\frac12} \theta^{\frac12} \leq \frac{cK_c^2}{2} C_3^\frac12 \cdot \theta^{\frac12} =: c_5 \cdot \theta^\frac12.
\end{align}

\noindent \textbf{Ad.} $J^n_6$. Using the Ito isometry, Assumption $(A.1)$ and estimate \eqref{eq:5.5}, we obtain the following
\begin{align}
\label{eq:5.25}
\Eb & \left[ | J^n_6(\tau_n + \theta) - J^n_6(\tau_n)|^2_{\mathrm{H}} \right] = \Eb \left| \int_{\tau_n}^{\tau_n + \theta} P_n C \uns\,dW(s)\right|^2_{\mathrm{H}} \leq c \Eb \int_{\tau_n}^{\tau_n + \theta} |C \uns|_\rH^2\,ds \nonumber \\
& \leq c K_c^2 \Eb \int_{\tau_n}^{\tau_n + \theta} \|\uns\|_\rV^2\,ds  \leq c K_c^2 \Eb \sup_{s \in [0,T]}\|\uns\|_\rV^2\, \theta  \leq c K_c C_1(1) \cdot \theta =: c_6 \cdot \theta.
\end{align}
Let us fix $\kappa > 0$ and $\varepsilon > 0$. By the Chebyshev's inequality and estimates \eqref{eq:5.21} - \eqref{eq:5.24}, we obtain
\begin{align*}
\mathbb{P}( \{|J_i^n(\tau_n + \theta) - J^n_i(\tau_n)|_{\mathrm{H}} \geq \kappa \}) & \leq \dfrac{1}{\kappa} \mathbb{E} \left[|J_i^n(\tau_n+ \theta) - J^n_i(\tau_n)|_{\mathrm{H}} \right] \leq \dfrac{c_i \theta}{\kappa}; \quad n \in \mathbb{N},
\end{align*}
where $i = 1, \cdots, 5$. Let $\delta_i = \dfrac{\kappa}{c_i} \varepsilon$. Then
\[ \sup_{n \in \mathbb{N}} \sup_{0 \leq \theta \leq \delta_i} \mathbb{P}( \{|J_i^n(\tau_n + \theta) - J^n_i(\tau_n)|_{\mathrm{H}} \geq \kappa \}) \leq \varepsilon,~~~ i = 1 \dots 5.\]
By the Chebyshev inequality and \eqref{eq:5.25}, we have
\begin{align*}
\mathbb{P}( \{|J_6^n(\tau_n + \theta) - J^n_6(\tau_n)|_{\mathrm{H}} \geq \kappa \}) &\leq \dfrac{1}{\kappa^2} \mathbb{E} \left[|J_i^n(\tau_n+ \theta) - J^n_i(\tau_n)|_{\mathrm{H}}^2 \right]\\
& \leq \dfrac{c_6 \theta}{\kappa^2},~~~n \in \mathbb{N}.
\end{align*}
Let $\delta_6 = \dfrac{\kappa^2}{C_6} \varepsilon$. Then
\[ \sup_{n \in \mathbb{N}} \sup_{0 \leq \theta \leq \delta_6} \mathbb{P}( \{|J_6^n(\tau_n + \theta) - J^n_6(\tau_n)|_{\mathrm{H}} \geq \kappa \}) \leq \varepsilon.\]
Since $[\textbf{A}]$ holds for each term $J_i^n,~i= 1,2, \cdots, 6$; we infer that it holds also for $(u_n)$. Therefore, we can conclude the proof of the lemma by invoking Corollary~\ref{cor3.6}.
\end{proof}

\subsection{Proof of Theorem~\ref{thm5.1}}
\label{s:5.3}

By Lemma~\ref{lemma5.6} the set of measures $\{\mathcal{L}(u_n), n \in \mathbb{N} \}$ is tight on the space $(\mathcal{Z}_T, \mathcal{T})$ defined by \eqref{eq:3.1}. Hence by Corollary~\ref{cor3.8} there exist a subsequence $(n_k)_{k \in \mathbb{N}}$, a probability space $(\tilde{\Omega}, \tilde{\mathcal{F}}, \tilde{\mathbb{P}})$ and, on this space, $\mathcal{Z}_T$-valued random variables $\tilde{u}, \tilde{u}_{n_k}, k \ge 1$ such that
\begin{equation}
\label{eq:5.26}
\tilde{u}_{n_k}\,\mbox{has the same law as }\, u_{n_k}\,\mbox{ and }\, \tilde{u}_{n_k} \to \tilde{u} \,\mbox{ in }\,\mathcal{Z}_T,\quad \tp-\mbox{a.s.}
\end{equation}
$\tunk \to \tu$ in $\mathcal{Z}_T$, $\tp-\mbox{a.s.}$ precisely means that
\begin{align*}
\tu_{n_k} &\to \tu \, \mbox{in }\ccal([0,T];\rH),\\
\tu_{n_k} &\rightharpoonup \tu \, \mbox{in }L^2(0,T; \rD(\mathrm{A})),\\
\tu_{n_k} &\to \tu \, \mbox{in }L^2(0,T; \rV),\\
\tu_{n_k} &\to \tu \, \mbox{in }\ccal([0,T]; \rV_{\mathrm{w}}).
\end{align*}

Let us denote the subsequence $(\tilde{u}_{n_k})$ again by $(\tun)_{n \in \mathbb{N}}$.\\

Since $u_n \in \ccal([0,T]; \mathrm{H}_n), \mathbb{P}$-a.s. and $\ccal([0,T]; \rH_n)$ is a Borel subset of $\ccal([0,T];\mathrm{H}) \cap L^2(0,T; \rV)$ and also $\tun$, $u_n$ have the same laws on $\mathcal{Z}_T$ we can make the following inferences
\begin{align*}
&\mathcal{L}(\tun)(\ccal([0,T]; \mathrm{H}_n) = 1, \quad n \ge 1\;,\\
&|\tilde{u}_n(t)|_\rH = |u_n(t)|_\rH,\, a.s.
\end{align*}
Also from \eqref{eq:5.26} $\tilde{u}_n \to \tilde{u}$ in $\ccal([0,T]; \rH)$ and by Lemma~\ref{lemma5.3} $u_n(t) \in \mathcal{M}$ for every $t \in [0,T]$. Therefore we can conclude that
\begin{equation}
\label{eq:invariance}
\tilde{u}(t) \in \mathcal{M}, \quad t \in [0,T].
\end{equation}
Moreover, by \eqref{eq:5.5} and \eqref{eq:5.7}, for $p \in [1, 1 + \frac{1}{K_c^2})$
\begin{align}
\label{eq:5.27}
\sup_{n \in \mathbb{N}} \tilde{\E} \left(\sup_{0 \le s \le T} \|\tuns\|^{2p}_{\mathrm{V}} \right) \le C_1(p),\\
\label{eq:5.28}
\sup_{n \in \mathbb{N}} \tilde{\E} \left[ \int_0^T |\tuns|^2_{\mathrm{D}(\mathrm{A})}\,ds \right] \le C_3.
\end{align}

By inequality \eqref{eq:5.28} we infer that the sequence $(\tun)$ contains a subsequence, still denoted by $(\tun)$ convergent weakly in the space $L^2([0,T] \times \tom; \mathrm{D}(\mathrm{A}))$. Since by \eqref{eq:5.26} $\tp$-a.s $\tun \to \tilde{u}$ in $\mathcal{Z}_T$, we conclude that $\tilde{u} \in L^2([0,T] \times \tom; \mathrm{D}(\mathrm{A}))$, i.e.
\begin{equation}
\label{eq:5.29}
\tilde{\E} \left[\int_0^T |\tus|^2_{\mathrm{D}(\mathrm{A})}\,ds \right] < \infty.
\end{equation}

Similarly by inequality \eqref{eq:5.27} we can choose a subsequence of $(\tun)$ convergent weak star in the space $L^2(\tom; L^\infty(0,T; \mathrm{V}))$ and, using \eqref{eq:5.26}, we infer that
\begin{equation}
\label{eq:5.30}
\tilde{\E} \left(\sup_{0 \le s \le T} \|\tus\|^{2}_{\mathrm{V}} \right) < \infty.
\end{equation}

For each $n \geq 1$, let us consider a process $\tilde{M}_n$ with trajectories in $\ccal([0,T]; \rH_n)$, in particular in $\ccal([0,T];\mathrm{H})$ defined by
\begin{align}
\label{eq:5.31}
\tilde{M}_n(t) & = \tilde{u}_n(t) - P_n \tilde{u}(0) + \int_0^t P_n \mathrm{A} \tuns\,ds + \int_0^t P_n B(\tuns)\,ds \nonumber \\
&~~~ - \int_0^t |\nabla \tuns|^2 \tuns \, ds - \dfrac12 \sum_{j=1}^m \int_0^t (P_nC_j)^2 \tuns \, ds \quad t \in [0,T].
\end{align}

\begin{lemma}
\label{lemma5.7}
$\tilde{M}_n$ is a square integrable martingale with respect to the filtration $\tilde{\mathbb{F}}_n = (\tilde{\mathcal{F}}_{n,t})$, where $\tilde{\mathcal{F}}_{n,t} = \sigma\{\tuns, s \leq t\}$ with the quadratic variation
\begin{equation}
\label{eq:5.32}
 \langle \langle \tilde{M}_n \rangle \rangle_t =  \int_0^t \sum_{j=1}^m |P_n C_j \tuns|^2_\rH \, ds.
\end{equation}
\end{lemma}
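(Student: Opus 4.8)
The plan is to identify $\tMn$ as the image of the trajectory $\tun$ under a fixed deterministic measurable map, and then to transport the martingale and quadratic–variation identities from the original Galerkin process $u_n$ using the equality of laws in \eqref{eq:5.26}. First I would pass from the Stratonovich form \eqref{eq:5.3} to the It\^o form: the conversion produces the correction $\tfrac12\sum_{j=1}^m (P_nC_j)^2 u_n$, so that the process
\[
\Mn(t):= u_n(t)-u_n(0)+\int_0^t P_n\mathrm{A}\uns\,ds+\int_0^t P_nB(\uns)\,ds-\int_0^t|\nabla\uns|_{L^2}^2\uns\,ds-\tfrac12\sum_{j=1}^m\int_0^t (P_nC_j)^2\uns\,ds
\]
coincides with the It\^o integral $\sum_{j=1}^m\int_0^t P_nC_j\uns\,dW_j(s)$. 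By Lemma~\ref{lemma5.2} the integrand is square integrable, so $\Mn$ is a square integrable $\rH_n$-valued martingale with respect to $\sigma\{u_n(r):r\le t\}$, and the It\^o isometry gives that $|\Mn(\cdot)|_\rH^2-\int_0^\cdot\sum_{j=1}^m|P_nC_j\uns|_\rH^2\,ds$ is a martingale, i.e. the quadratic variation is $\int_0^t\sum_{j=1}^m|P_nC_j\uns|_\rH^2\,ds$.

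Next I would encode these two facts as the vanishing of expectations of fixed functionals of the path. Fix $0\le s\le t\le T$ and a bounded continuous $\gamma$ on $\ccal([0,s];\rH_n)$. The martingale property and the quadratic variation amount to
\[
\E\big[(\Mn(t)-\Mn(s))\,\gamma(u_n|_{[0,s]})\big]=0
\]
and
\[
\E\Big[\big(|\Mn(t)|_\rH^2-|\Mn(s)|_\rH^2-\textstyle\int_s^t\sum_{j=1}^m |P_nC_j\uns|_\rH^2\,dr\big)\gamma(u_n|_{[0,s]})\Big]=0.
\]
On the finite dimensional space $\rH_n$ the maps $w\mapsto P_n\mathrm{A}w$, $w\mapsto P_nB(w)$, $w\mapsto|\nabla w|_{L^2}^2 w$ and $w\mapsto (P_nC_j)^2 w$ are continuous, so the time integrals are continuous functionals of the trajectory and $\Mn=\Phi_n(u_n)$ for one and the same continuous map $\Phi_n:\ccal([0,T];\rH_n)\to\ccal([0,T];\rH_n)$; applying the same $\Phi_n$ to $\tun$ yields $\tMn=\Phi_n(\tun)$. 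Since by \eqref{eq:5.26} $\tun$ and $u_n$ have identical laws on $\ccal([0,T];\rH_n)$, the pairs $(u_n,\Mn)$ and $(\tun,\tMn)$ have identical laws; and as the functionals inside the two displays are integrable (using $\Mn\in L^2$, the boundedness of $\gamma$, and finiteness of $\int_0^t|P_nC_j\uns|_\rH^2\,ds$), both displayed expectations are unchanged when $u_n$ is replaced by $\tun$ and $\E$ by $\te$. This gives $\te\big[(\tMn(t)-\tMn(s))\,\gamma(\tun|_{[0,s]})\big]=0$ and the corresponding second identity, which are precisely the statements that $\tMn$ is an $\tilde{\mathbb{F}}_n$-martingale with quadratic variation \eqref{eq:5.32}, since functionals of $\tun|_{[0,s]}$ generate $\tilde{\mathcal{F}}_{n,s}=\sigma\{\tun(r):r\le s\}$.

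Square integrability of $\tMn(t)$ then follows because $\tMn(t)$ has the same law as $\Mn(t)\in L^2$, the uniform bounds \eqref{eq:5.27}–\eqref{eq:5.28} supplying the required integrability. I expect the main obstacle to be the rigorous transfer step: one must verify that every term entering $\Mn$ is a genuinely measurable (here continuous) functional of the trajectory, so that equality of laws applies, and that the unbounded drift functionals are integrable against both laws. The initial datum causes no trouble, as $P_n\tilde u(0)$ (respectively $u_n(0)$) cancels in the increments $\tMn(t)-\tMn(s)$ that enter the identities and only affects the a.s.\ finiteness of $\tMn(t)$, where it is harmless because $\tun(0)$ is a.s.\ equal to the deterministic value $u_n(0)=P_nu_0/|P_nu_0|$.
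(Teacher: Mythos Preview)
Your proposal is correct and follows essentially the same approach as the paper: both transfer the martingale property and quadratic variation from $M_n$ to $\tilde M_n$ by expressing them as vanishing expectations of measurable functionals of the path and invoking the equality of laws \eqref{eq:5.26}. The paper's proof is terser and tests the identities against fixed vectors $\psi,\zeta\in\rH$ (writing the quadratic variation in the bilinear form $\langle (C_j\tunsi)^\ast P_n\psi,(C_j\tunsi)^\ast P_n\zeta\rangle_\R$), whereas you work directly with the $\rH_n$-valued process and $|\cdot|_\rH^2$; on the finite-dimensional space $\rH_n$ these formulations are equivalent, so the difference is purely cosmetic.
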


\begin{proof}
Indeed, since $\tun$ and $u_n$ have the same laws, for all $s, t \in [0,T], s \leq t$, for all bounded continuous functions $h$ on $\ccal([0,s]; \mathrm{H})$, and all $\psi, \zeta \in \mathrm{H}$, we have

\begin{equation}
\label{eq:5.33}
\tilde{\Eb} \left[ \langle \tilde{M}_n(t) - \tilde{M}_n(s) , \psi \rangle_{\mathrm{H}} h(\tilde{u}_{n|[0,s]}) \right] = 0
\end{equation}
and
\begin{align}
\label{eq:5.34}
\tilde{\Eb} \Big[ &\Big( \langle \tilde{M}_n(t), \psi \rangle_{\mathrm{H}} \langle \tilde{M}_n(t), \zeta \rangle_{\mathrm{H}} - \langle \tilde{M}_n(s), \psi \rangle_{\mathrm{H}} \langle \tilde{M}_n(s), \zeta \rangle_{\mathrm{H}} \nonumber \\
& - \sum_{j=1}^m \int_s^t \left\langle \left(C_j \tunsi\right)^\ast P_n \psi , \left(C_j \tunsi\right)^\ast P_n \zeta \right\rangle_{\R}\, d \sigma \Big) \cdot h(\tilde{u}_{n|[0,s]}) \Big] = 0.
\end{align}
\end{proof}

\begin{lemma}
\label{lemma_mart}
Let us define a process $\tilde{M}$ for $t \in [0,T]$ by
\begin{align}\nonumber
\label{eq:5.35}
\tilde{M}(t) & = \tilde{u}(t) - \tilde{u}(0) + \int_0^t \mathrm{A} \tus\,ds + \int_0^t B(\tus)\,ds
\\ &- \int_0^t |\nabla \tus |_{L^2}^2 \tus\,ds - \dfrac12  \sum_{j=1}^m \int_0^t C_j^2 \tus\,ds.
\end{align}
Then $\tilde{M}$ is an $\rH-$valued continuous process.
\end{lemma}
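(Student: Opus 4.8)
The plan is to show that each of the five terms on the right-hand side of \eqref{eq:5.35} is a continuous $\rH$-valued process, and then to conclude by the fact that a finite sum of continuous $\rH$-valued processes is again continuous. Throughout I work $\tp$-almost surely and use the regularity of $\tu$ recorded earlier: since $\tu$ takes values in $\zcal_T$ we have $\tu \in \ccal([0,T];\rH)$, $\tu \in L^2(0,T;\rV)$ and $\tu \in L^2(0,T;\rD(\rA))$; by \eqref{eq:invariance} we have $|\tus|_{\rH}=1$ for every $s$; and by \eqref{eq:5.29}, \eqref{eq:5.30} the quantities $\int_0^T |\tus|^2_{\rD(\rA)}\,ds$ and $\sup_{s\in[0,T]}\|\tus\|_{\rV}$ are finite almost surely. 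The first term $\tut - \tu(0)$ is continuous in $\rH$ directly from $\tu \in \ccal([0,T];\rH)$. For each of the remaining four integral terms it suffices to check that the integrand belongs to $L^1(0,T;\rH)$ almost surely, because the indefinite Bochner integral of an $L^1(0,T;\rH)$ function is an absolutely continuous, hence continuous, $\rH$-valued function of $t$.

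For the linear Stokes term, $\rA\tus \in \rH$ for a.e.\ $s$ and $\int_0^T |\rA\tus|_{\rH}^2\,ds \le \int_0^T |\tus|^2_{\rD(\rA)}\,ds < \infty$ by \eqref{eq:5.29}, so $\rA\tu \in L^2(0,T;\rH)\subset L^1(0,T;\rH)$. For the constraint term, using $|\tus|_{\rH}=1$ we have $\int_0^T \big| |\nabla\tus|_{L^2}^2\,\tus\big|_{\rH}\,ds = \int_0^T \|\tus\|_{\rV}^2\,ds < \infty$ since $\tu \in L^2(0,T;\rV)$. For the Stratonovich correction term, $C_j$ is a first-order constant-coefficient operator, so $|C_j^2 u|_{\rH} \le c\,K_c^2\,|u|_{\rD(\rA)}$, whence by the Cauchy--Schwarz inequality $\int_0^T |C_j^2\tus|_{\rH}\,ds \le c\,K_c^2\,T^{1/2}\big(\int_0^T |\tus|^2_{\rD(\rA)}\,ds\big)^{1/2} < \infty$, again by \eqref{eq:5.29}; summing over the finitely many $j$ preserves integrability.

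The main point is the nonlinear term. Applying the interpolation estimate \eqref{eq:2.3} with $u=v=\tus$ and taking the supremum over $\phi$ with $|\phi|_{\rH}\le 1$ gives $|B(\tus)|_{\rH} \le \sqrt{2}\,|\tus|_{\rH}^{1/2}\,\|\tus\|_{\rV}\,|\tus|_{\rD(\rA)}^{1/2}$. Using $|\tus|_{\rH}=1$ and bounding $\|\tus\|_{\rV}$ by its supremum yields
\[
\int_0^T |B(\tus)|_{\rH}\,ds \le \sqrt{2}\,\Big(\sup_{s\in[0,T]}\|\tus\|_{\rV}\Big)\int_0^T |\tus|_{\rD(\rA)}^{1/2}\,ds,
\]
and a further application of H\"older's inequality gives $\int_0^T |\tus|^{1/2}_{\rD(\rA)}\,ds \le T^{3/4}\big(\int_0^T |\tus|^2_{\rD(\rA)}\,ds\big)^{1/4}$. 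Both factors on the right are finite almost surely by \eqref{eq:5.30} and \eqref{eq:5.29} respectively, so $B(\tu)\in L^1(0,T;\rH)$ almost surely and the corresponding integral term is continuous in $\rH$. This is the step I expect to be most delicate, since it is the only place where the loss of regularity in the quadratic nonlinearity must be absorbed, and it relies essentially on the manifold constraint $|\tus|_{\rH}=1$ together with the two bounds carried over from the Galerkin sequence.

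Combining the four integrability statements with the $\rH$-continuity of $\tut-\tu(0)$, we conclude that for $\tp$-almost every $\omega$ the map $t\mapsto \tilde{M}(t)$ is a continuous $\rH$-valued function on $[0,T]$; joint measurability in $(t,\omega)$ is inherited from that of $\tu$, so $\tilde{M}$ is a continuous $\rH$-valued process, which proves the lemma.
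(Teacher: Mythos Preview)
Your proof is correct and follows essentially the same approach as the paper: decompose $\tilde{M}$ into the five terms and show each integrand lies in $L^1(0,T;\rH)$ using \eqref{eq:2.3}, \eqref{eq:invariance}, \eqref{eq:5.29} and \eqref{eq:5.30}. The only cosmetic difference is that the paper bounds $\tilde{\E}\int_0^T|\cdot|_{\rH}\,ds$ for each term (so the integrands are shown to lie in $L^1(\tilde{\Omega}\times[0,T];\rH)$), whereas you work pathwise and bound $\int_0^T|\cdot|_{\rH}\,ds$ almost surely; your framing is arguably more directly aimed at the stated conclusion, and you are more explicit about why $L^1$-integrability of the integrand yields $\rH$-continuity of the indefinite integral.
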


\begin{proof}
Since $\tilde{u} \in \ccal([0,T]; \rV)$ we just need to show that each of the remaining four terms on the RHS of \eqref{eq:5.35} are $\rH-$valued and well defined.\\

Using the Cauchy-Schwarz inequality repeatedly and by \eqref{eq:5.29} we have the following inequalities
\begin{align*}
\tilde{\E}\,\int_0^T |\mathrm{A}\tus|_\rH\,ds \le T^{1/2} \left({\tilde{\E}} \int_0^T |\tus|^2_{\mathrm{D}(\mathrm{A})}\,ds\right)^{1/2} < \infty.
\end{align*}

Using \eqref{eq:2.3}, the H\"older inequality, \eqref{eq:invariance} and the estimates \eqref{eq:5.29} and \eqref{eq:5.30} we obtain the following:
\begin{align*}
\tilde{\E} & \int_0^T|B(\tus)|_\rH\,ds \le 2 \tilde{\E} \int_0^T |\tus|^{1/2}_\rH |\nabla \tus|_{L^2}|\tus|^{1/2}_{\mathrm{D}(\mathrm{A})}\,ds \\
& \le 2 \tilde{\E}\left[\left(\int_0^T \|\tus\|_\rV^{4/3}\,ds\right)^{3/4} \left(\int_0^T |\tus|^2_{\mathrm{D}(\mathrm{A})}\,ds \right)^{1/4}\right]\\
& \le 2 T^{3/4} \left(\tilde{\E}\sup_{s \in [0,T]}\|\tus\|^{4/3}_\rV\right)^{3/4}\left(\tilde{\E}\int_0^T |\tus|^2_{\mathrm{D}(\mathrm{A})}\,ds \right)^{1/4}< \infty.
\end{align*}
Using the H\"older inequality, \eqref{eq:invariance} and inequality \eqref{eq:5.30} we have
\begin{align*}
\tilde{\E}& \int_0^T |\nabla \tus|^2_{L^2}|\tus|_\rH \,ds \le \tilde{\E}\int_0^T \|\tus\|^2_\rV \,ds \le \tilde{\E} \left( \sup_{s \in [0,T]}\|\tus\|^2_\rV \right) T < \infty.
\end{align*}

Now we are left to deal with the last term on the RHS. Using Assumption $(A.1)$ and  estimate \eqref{eq:5.29}, we have the following inequalities for every $j \in \{1, \cdots, m \}$,
\begin{align*}
\tilde{\E} \int_0^T|C_j^2 \tus|_\rH \le K_c T^{1/2} \left(\tilde{\E} \int_0^T |\tus|^2_{\mathrm{D}(\mathrm{A})} \,ds \right)^{1/2} < \infty.
\end{align*}
This concludes the proof of the lemma.
\end{proof}

\begin{lemma}
\label{lemma5.8}
For all $s, t \in [0,T]$ such that $s \leq t$ then:
\begin{itemize}
\item[(a)] $\lim_{n \rightarrow \infty}\langle \tilde{u}_n(t), P_n \psi\rangle_{\mathrm{H}} = \langle\tilde{u}(t), \psi \rangle_{\mathrm{H}},~~ \tilde{\mathbb{P}}$-a.s.~~~ $\psi \in \mathrm{H}$,
\item[(b)] $\lim_{n \rightarrow \infty} \int_s^t \langle \mathrm{A} \tunsi, P_n \psi \rangle_{\mathrm{H}}~d\sigma = \int_s^t \langle \mathrm{A} \tusi, \psi \rangle_{\mathrm{H}}~d\sigma,~~ \tilde{\mathbb{P}}$-a.s.~~ $\psi \in \mathrm{H}$,
\item[(c)]
 $\lim_{n \rightarrow \infty} \int_s^t \langle B(\tunsi, \tunsi), P_n \psi \rangle_{\mathrm{H}}~d\sigma = \int_s^t\langle B(\tusi, \tusi), \psi \rangle\,d \sigma,~~ \tilde{\mathbb{P}}$-a.s.~~ $\psi \in \rV$,
\item[(d)] $\lim_{n \rightarrow \infty} \int_s^t |\nabla \tunsi |_{L^2}^2 \langle \tunsi, P_n \psi \rangle_{\mathrm{H}}~d \sigma = \int_s^t |\nabla \tusi|_{L^2}^2 \langle \tusi, \psi \rangle_{\mathrm{H}}~d \sigma,~ \tilde{\mathbb{P}}$-a.s. $\psi \in \mathrm{H}$,
\item[(e)] $\lim_{n \rightarrow \infty} \langle \int_s^t C_j^2 \tunsi, P_n \psi \rangle_{\mathrm{H}}~d \sigma = \int_s^t \langle C_j^2 \tusi, \psi \rangle_{\mathrm{H}}~d \sigma,~~ \tilde{\mathbb{P}}$-a.s.~~ $\psi \in \mathrm{H}$.
\end{itemize}
\end{lemma}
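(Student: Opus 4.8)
The plan is to fix once and for all an $\omega \in \tilde\Omega$ outside a $\tp$-null set so that the four convergences in \eqref{eq:5.26} hold simultaneously, and to prove each of (a)--(e) as a deterministic statement for that trajectory. Two elementary facts are used throughout. First, since $P_n$ is the orthogonal projection in $\rH$ onto the span of the first $n$ eigenvectors of $\rA$, and these eigenvectors form an orthogonal basis of $\rV$ as well, we have $P_n\psi \to \psi$ in $\rH$ for every $\psi \in \rH$ and $P_n\psi \to \psi$ in $\rV$ for every $\psi \in \rV$; moreover, because $\tunsi \in \rH_n$, one has $\langle \tunsi, P_n\psi\rangle_\rH = \langle \tunsi, \psi\rangle_\rH$. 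Second, weakly (resp. strongly) convergent sequences are bounded, so along our fixed $\omega$ both $\sup_n |\tilde u_n|_{L^2(0,T;\rD(\rA))} < \infty$ and $\sup_n \sup_{\sigma \in [0,T]}\|\tunsi\|_\rV < \infty$.

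Parts (a), (b) and (e) are the ``linear'' terms. For (a), write $\langle \tilde u_n(t), P_n\psi\rangle_\rH = \langle \tilde u_n(t), \psi\rangle_\rH$ and pass to the limit using $\tilde u_n \to \tilde u$ in $\ccal([0,T];\rH)$. For (b) and (e), observe that since $\rA\colon \rD(\rA)\to\rH$ and $C_j^2\colon \rD(\rA)\to\rH$ are bounded (the latter because $C_j$ is first order and $\rD(\rA)=\rH\cap H^2$), the maps $v \mapsto \int_s^t \langle \rA v(\sigma),\psi\rangle_\rH\,d\sigma$ and $v\mapsto \int_s^t \langle C_j^2 v(\sigma),\psi\rangle_\rH\,d\sigma$ are continuous linear functionals on $L^2(0,T;\rD(\rA))$; hence the weak convergence $\tilde u_n \rightharpoonup \tilde u$ in $L^2(0,T;\rD(\rA))$ gives convergence of the corresponding integrals taken against the fixed test function $\psi$. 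The discrepancy between $P_n\psi$ and $\psi$ is then controlled by a term bounded by $c\,|P_n\psi-\psi|_\rH\,T^{1/2}|\tilde u_n|_{L^2(0,T;\rD(\rA))}\to 0$, using the a.s. bound above.

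The nonlinear term (c) is the main point, and it is precisely here that the \emph{strong} convergence $\tilde u_n\to\tilde u$ in $L^2(0,T;\rV)$ is needed. First split off the test function: by the second inequality in \eqref{eq:2.2},
\[
\int_s^t |b(\tunsi,\tunsi,P_n\psi-\psi)|\,d\sigma \le c\,\|P_n\psi-\psi\|_\rV \int_0^T \|\tunsi\|_\rV^2\,d\sigma \longrightarrow 0 .
\]
For the remaining integrand, use bilinearity to write
\[
b(\tunsi,\tunsi,\psi)-b(\tusi,\tusi,\psi) = b(\tunsi-\tusi,\tunsi,\psi)+b(\tusi,\tunsi-\tusi,\psi),
\]
and bound each summand, again by \eqref{eq:2.2}, as $c\,\|\psi\|_\rV(\|\tunsi\|_\rV+\|\tusi\|_\rV)\|\tunsi-\tusi\|_\rV$; integrating in $\sigma$ and applying the Cauchy--Schwarz inequality yields a bound
\[
c\,\|\psi\|_\rV \Big(\int_0^T \|\tunsi-\tusi\|_\rV^2\,d\sigma\Big)^{1/2}\Big(\int_0^T (\|\tunsi\|_\rV+\|\tusi\|_\rV)^2\,d\sigma\Big)^{1/2},
\]
which tends to $0$ since the first factor vanishes by $L^2(0,T;\rV)$ convergence and the second is bounded. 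This is the only genuine obstacle: weak $L^2(0,T;\rD(\rA))$ convergence alone would not control the quadratic term, and the strong $L^2(0,T;\rV)$ convergence supplied by the tightness argument is what makes it work.

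Finally, for the cubic term (d), note $|\nabla \tunsi|_{L^2}^2 = \|\tunsi\|_\rV^2$ and replace $P_n\psi$ by $\psi$ as in the first paragraph; it then suffices to treat $\int_s^t \|\tunsi\|_\rV^2\,\langle \tunsi,\psi\rangle_\rH\,d\sigma$ as a product of the two factors $f_n(\sigma)=\|\tunsi\|_\rV^2$ and $g_n(\sigma)=\langle \tunsi,\psi\rangle_\rH$. Here $f_n\to f:=\|\tusi\|_\rV^2$ in $L^1(0,T)$ (from $\|\tunsi\|_\rV^2-\|\tusi\|_\rV^2=(\|\tunsi\|_\rV-\|\tusi\|_\rV)(\|\tunsi\|_\rV+\|\tusi\|_\rV)$, Cauchy--Schwarz, and $L^2(0,T;\rV)$ convergence), while $g_n\to g:=\langle \tusi,\psi\rangle_\rH$ uniformly on $[0,T]$ with $\sup_n\sup_\sigma |g_n(\sigma)|<\infty$ (from $\ccal([0,T];\rH)$ convergence). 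Writing
\[
\int_s^t f_n g_n\,d\sigma - \int_s^t f g\,d\sigma = \int_s^t f_n(g_n-g)\,d\sigma + \int_s^t (f_n-f) g\,d\sigma
\]
and estimating the two pieces by $\|g_n-g\|_\infty\int_0^T f_n\,d\sigma$ and $\|g\|_\infty\,\|f_n-f\|_{L^1(0,T)}$ respectively, both of which tend to $0$, concludes the proof.
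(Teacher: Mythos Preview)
Your proof is correct and follows essentially the same route as the paper: for the linear terms (b) and (e) you exploit weak convergence in $L^2(0,T;\rD(\rA))$, while for the nonlinear terms (c) and (d) you use the strong $L^2(0,T;\rV)$ convergence together with the bilinear (respectively quadratic) splitting; the paper does exactly the same, only with a slightly more explicit three-term decomposition in (d) and an $\rA^{-1}$ rewriting in (b) and (e). Your use of $\tilde u_n(\sigma)\in\rH_n$ to replace $P_n\psi$ by $\psi$ directly in (a) and (d) is a minor simplification the paper does not exploit, but the underlying argument is the same.
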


\begin{proof}
Let us fix $s, t \in [0,T]$, $s \leq t$. By \eqref{eq:5.26} we know that
\begin{equation}
\label{eq:5.36}
\tilde{u}_n \rightarrow \tilde{u}~in~\ccal([0,T];\mathrm{H}) \cap L^2_{\mathrm{w}}(0,T; D(\mathrm{A})) \cap L^2(0,T; V) \cap \ccal([0,T]; V_{\mathrm{w}}),~~~\tilde{\mathbb{P}}\text{-a.s.}
\end{equation}

Let $\psi \in \rH$. Since $\tilde{u}_n \rightarrow \tilde{u}$ in $\ccal([0,T]; \mathrm{H})$ $\tilde{\mathbb{P}}$-a.s. and $P_n \psi \rightarrow \psi$ in $\mathrm{H}$, we have
\begin{align*}
\lim_{n \to \infty} & \langle\tilde{u}_n(t), P_n \psi\rangle_{\mathrm{H}} - \langle\tilde{u}(t), \psi\rangle_{\mathrm{H}} \\
& = \lim_{n \to \infty}\langle\tilde{u}_n(t) - \tilde{u}(t), P_n \psi\rangle_{\mathrm{H}} + \lim_{n \to \infty}\langle\tilde{u}(t), P_n \psi - \psi\rangle_{\mathrm{H}} = 0 \quad \tilde{\mathbb{P}}\text{-a.s.}
\end{align*}
Thus we infer that assertion $(a)$ holds.\\

Let $\psi \in \rH$, then
\begin{align*}
&\int_s^t \langle \mathrm{A} \tunsi, P_n \psi \rangle_{\mathrm{H}} ~ d \sigma - \int_s^t \langle \mathrm{A} \tusi, \psi\rangle_{\mathrm{H}}~d \sigma \\
& = \int_s^t \langle \mathrm{A} \tunsi - \mathrm{A} \tusi, \psi\rangle_{\mathrm{H}}~d \sigma + \int_s^t \langle \mathrm{A} \tunsi, P_n \psi - \psi\rangle_{\mathrm{H}}~d \sigma\\
& \leq \int_s^t \langle \tunsi - \tusi, \mathrm{A}^{-1} \psi\rangle_{\mathrm{D}(\mathrm{A})}\,d \sigma + \int_s^t |\tunsi|_{\mathrm{D}(\mathrm{A})} |P_n \psi - \psi|_{\mathrm{H}}~d \sigma\\
& \leq \int_s^t \langle \tunsi - \tusi, \mathrm{A}^{-1} \psi\rangle_{\mathrm{D}(\mathrm{A})}\,d \sigma + |P_n \psi - \psi|_{\mathrm{H}} |\tilde{u}_n|_{L^2(0,T;\mathrm{D}(\mathrm{A}))} T^{1/2}.
\end{align*}
By \eqref{eq:5.36} $\tilde{u}_n \rightarrow \tilde{u}$ weakly in $L^2(0,T; \mathrm{D}(\mathrm{A}))$ $\tilde{\mathbb{P}}$-a.s. $\tilde{u}_n$ is a uniformly bounded sequence in $L^2(0,T; \mathrm{D}(\mathrm{A}))$ and $P_n \psi \rightarrow \psi$ in $\mathrm{H}$. Hence we have, $\tilde{\mathbb{P}}-$a.s.,
\[\lim_{n \rightarrow \infty} \int_s^t \langle \tunsi - \tusi, \mathrm{A}^{-1} \psi \rangle_{\mathrm{D}(\mathrm{A})} ~ d \sigma \to 0,\]
and \[
\lim_{n \rightarrow \infty}|P_n \psi - \psi|_{\mathrm{H}} \to 0.\]
Thus, we have shown that assertion $(b)$ is true.\\

We will now prove assertion $(c)$. Let $\psi \in \rV$. Then we have the following estimates:
\begin{align*}
&\int_s^t \langle B(\tunsi), P_n \psi \rangle_\rH - \int_s^t \langle B(\tusi), \psi\rangle \,d \sigma \\
&~~ = \int_s^t\langle B(\tunsi) - B(\tusi), \psi \rangle_\rH \,d \sigma + \int_s^t \langle B(\tunsi), P_n \psi - \psi\rangle \, d \sigma \\
&~~ = \int_s^t \left[b(\tunsi, \tunsi, \psi) - b(\tusi, \tusi, \psi) \right]\,d \sigma + \int_s^t \langle B(\tunsi), P_n \psi - \psi\rangle \, d \sigma.
\end{align*}
Using \eqref{eq:2.2}, we get
\begin{align*}
&\int_s^t \langle B(\tunsi), P_n \psi \rangle_\rH - \int_s^t \langle B(\tusi), \psi\rangle \,d \sigma \\
&~~ =  \int_s^t b(\tunsi - \tusi, \tunsi, \psi) \,d \sigma + \int_s^t b(\tusi, \tunsi - \tusi, \psi)~d \sigma \\
&~~~~ + \int_s^t \langle B(\tunsi), P_n \psi - \psi\rangle \, d \sigma \\
&~~ \leq  \int_s^t \|\tunsi - \tusi\|_{\mathrm{V}} \|\tunsi\|_{\mathrm{V}} \|\psi\|_{\mathrm{V}}\,d \sigma  +  \int_s^t \|\tusi\|_{\mathrm{V}}\|\tunsi -\tusi\|_{\mathrm{V}} \|\psi\|_{\mathrm{V}}~d \sigma\\
&~~~~ +  \int_s^t \|\tunsi\|^2_{\mathrm{V}}\|P_n \psi - \psi\|_{\mathrm{V}}\,d \sigma .
\end{align*}
Now since, $\tilde{u}_n \rightarrow \tilde{u}$ in $L^2(0,T; \rV)$, in particular $\tilde{u} \in L^{2}(0,T; \rV)$, also the sequence $(\tilde{u}_n)$ is uniformly bounded in $L^2(0,T; \rV)$. Thus using the Cauchy-Schwarz inequality and the convergence of $P_n \psi \to \psi$ in $\rV$, we have $\tilde{\mathbb{P}}-$a.s.,
\begin{align*}
&\lim_{n \to \infty}\int_s^t \langle B(\tunsi), P_n \psi \rangle_\rH - \int_s^t \langle B(\tusi), \psi\rangle \,d \sigma \\
&~~ \leq \lim_{n \to \infty} |\tilde{u}_n - \tilde{u}|_{L^2(0,T; \mathrm{V})} \Bigl[ |\tilde{u}_n|_{L^2(0,T; \mathrm{V})}
\\ &+ |\tilde{u}|_{L^2(0,T; \mathrm{V})} \Bigr] \|\psi\|_{\mathrm{V}} + \lim_{n \to \infty} |\tilde{u}_n|^2_{L^2(0,T;V)}\|P_n \psi - \psi\|_\rV \rightarrow 0.
\end{align*}

Next we deal with $(d)$. Let $\psi \in \rH$, then
\begin{align*}
&\int_s^t |\nabla \tunsi|_{L^2}^2 \langle \tunsi, P_n \psi\rangle_{\mathrm{H}}\,d \sigma - \int_s^t |\nabla \tusi |_{L^2}^2\langle \tusi, \psi\rangle_{\mathrm{H}}\,d \sigma \\
&~~ = \int_s^t \left[ |\nabla \tunsi|_{L^2}^2- |\nabla \tusi |_{L^2}^2  \right] \langle \tusi, \psi\rangle_{\mathrm{H}}\,d \sigma + \int_s^t |\nabla \tunsi |_{L^2}^2 \langle \tunsi - \tusi, \psi \rangle_{\mathrm{H}}\,d \sigma\\
&~~~~ + \int_s^t |\nabla \tunsi|_{L^2}^2  \langle \tunsi, P_n \psi - \psi\rangle_{\mathrm{H}}~d \sigma \\
&~~ =  \int_s^t \left[ |\nabla \tunsi|_{L^2} - |\nabla \tusi |_{L^2} \right] \left[|\nabla \tunsi |_{L^2} + |\nabla \tusi |_{L^2}\right] \langle \tusi, \psi\rangle_{\mathrm{H}}\,d \sigma\\
&~~~~ +  \int_s^t |\nabla \tunsi |_{L^2}^2 \langle \tunsi - \tusi, \psi\rangle_{\mathrm{H}}\,d \sigma + \int_s^t  |\nabla \tunsi|_{L^2}^2  \langle\tunsi, P_n \psi - \psi\rangle_{\mathrm{H}}~d \sigma.
\end{align*}
Thus by the Cauchy-Schwarz inequality we get
\begin{align*}
&\int_s^t |\nabla \tunsi|_{L^2}^2 \langle \tunsi, P_n \psi\rangle_{\mathrm{H}}\,d \sigma - \int_s^t |\nabla \tusi |_{L^2}^2\langle \tusi, \psi\rangle_{\mathrm{H}}\,d \sigma \\
&~~ \leq  \int_s^t \left[ \|\tunsi - \tusi\|_{\mathrm{V}} \right] \left[\|\tunsi\|_{\mathrm{V}} + \|\tusi\|_{\mathrm{V}}\right] |\tusi|_{\mathrm{H}} |\psi|_{\mathrm{H}}\,d \sigma\\
&~~~~ +  \int_s^t \|\tunsi\|_{\mathrm{V}}^2 |\tunsi - \tusi|_{\mathrm{H}} |\psi|_{\mathrm{H}}\,d \sigma + \int_s^t \|\tunsi\|_{\mathrm{V}}^2 |\tunsi|_{\mathrm{H}} |P_n \psi - \psi|_{\mathrm{H}}\,d \sigma
\end{align*}
By \eqref{eq:5.36}, since $\tilde{u}_n \rightarrow \tilde{u}$ strongly in $\ccal([0,T]; \rH) \cap L^2(0,T; \rV)$, in particular $\tilde{u} \in L^2(0,T; \rV)$, also the sequence ($\tilde{u}_n$) is uniformly bounded in $L^2(0,T; \rV)$ and $P_n \psi \to \psi$ in $\rH$. Thus we have $\tilde{\mathbb{P}}-$a.s.
\begin{align*}
&\lim_{n \to \infty}\int_s^t |\nabla \tunsi|_{L^2}^2 \langle \tunsi, P_n \psi\rangle_{\mathrm{H}}\,d \sigma - \int_s^t |\nabla \tusi |_{L^2}^2\langle \tusi, \psi\rangle_{\mathrm{H}}\,d \sigma \\
&~~ \leq \lim_{n \to \infty} \left[ |\tilde{u}_n|_{L^2(0,T; V)} + |\tilde{u}|_{L^2(0,T; V)} \right]|\tilde{u}_n|_{L^{\infty}(0,T; H)}|\tilde{u}_n - \tilde{u}|_{L^2(0,T; V)}|\psi|_{\mathrm{H}}\\
&~~~~ + \lim_{n \to \infty} |\tilde{u}_n|_{L^2(0,T; V)}^2 |\tilde{u}_n - \tilde{u}|_{L^{\infty}(0,T; H)} | \psi|_\rH + \lim_{n \to \infty} |\tilde{u}_n|_{L^2(0,T; V)}^2 |\tilde{u}_n|_{L^{\infty}(0,T; H)}|P_n \psi - \psi|_\rH\rightarrow 0.
\end{align*}
Hence we infer that assertion $(d)$ holds.\\

Now we are left to show that $(e)$ holds. Let $\psi \in \rH$, then
\begin{align*}
&\int_s^t \langle C^2 \tunsi, P_n \psi\rangle_{\mathrm{H}}\,d \sigma - \int_s^t\langle C^2 \tusi, \psi \rangle_{\mathrm{H}}\,d \sigma\\
&~~ = \int_s^t \langle C^2(\tunsi - \tusi), \psi\rangle_{\mathrm{H}}\,d \sigma + \int_s^t \langle C^2 \tunsi, P_n \psi - \psi\rangle_{\mathrm{H}}~d \sigma \\
&~~ \leq \int_s^t \langle C^2A^{-1}\mathrm{A} (\tunsi - \tusi), \psi\rangle_{\mathrm{H}} \,d \sigma + K_c^2 \int_s^t |\tunsi|_{\mathrm{D}(\mathrm{A})} |P_n \psi - \psi|_{\mathrm{H}}\,d \sigma,
\end{align*}
where $K_c$ is defined in \eqref{eqn-K_c}.\\
Since $(\tilde{u}_n)$ is a uniformly bounded sequence in $L^2(0,T; \mathrm{D}(\mathrm{A}))$ and $C^2 \mathrm{A}^{-1}$ is a bounded operator thus by \eqref{eq:5.36}, we have $\tp$-a.s.
\begin{align*}
&\lim_{n \to \infty} \int_s^t \langle C^2 \tunsi, P_n \psi\rangle_{\mathrm{H}}\,d \sigma - \int_s^t\langle C^2 \tusi, \psi)_{\mathrm{H}}\,d \sigma\\
&~~ \leq \lim_{n \to \infty} \int_s^t \langle \mathrm{A}(\tunsi - \tusi), (C^2 \mathrm{A}^{-1})^{\ast} \psi\rangle_{\mathrm{H}}\,d \sigma + \lim_{n \to \infty} K_c^2 |\tilde{u}|_{L^2(0,T; \mathrm{D}(\mathrm{A}))}|P_n \psi - \psi|_\rH T^{1/2}\\
& ~~ = \lim_{n \to \infty} \int_s^t \langle \tunsi - \tusi, \mathrm{A}^{-1}(C^2 \mathrm{A}^{-1})^{\ast} \psi\rangle_{\mathrm{D}(\mathrm{A})}\,d \sigma + \lim_{n \to \infty} K_c^2 |\tilde{u}|_{L^2(0,T; D(\mathrm{A}))}|P_n \psi - \psi|_\rH T^\frac12 \to 0,
\end{align*}
where to establish the convergence we have used that $P_n \psi \to \psi$ in $\rH$. This completes the proof of Lemma~\ref{lemma5.8}.
\end{proof}

Let $h$ be a bounded continuous function on $\ccal([0,T]; \rH)$ and $\tilde{\mathbb{F}} =\left(\tilde{\mathcal{F}}_t\right) = \sigma \left\{\tilde{u}(s), s \le t\right\}$ be the filtration of sigma fields generated by the process $\tilde{u}$.\\

\begin{lemma}
\label{lemma5.9}
For all $s, t \in [0,T]$, such that $s \leq t$ and all $\psi \in \mathrm{V} \colon$
\begin{equation}
\label{eq:conv_mart}
 \lim_{n \rightarrow \infty} \tilde{\Eb} \left[ \langle\tilde{M}_n(t) - \tilde{M}_n(s), \psi\rangle h(\tilde{u}_{n|[0,s]})\right] = \tilde{\Eb} \left[ \langle\tilde{M}(t) - \tilde{M}(s), \psi\rangle h(\tilde{u}_{|[0,s]}) \right].
 \end{equation}
Here $\langle \cdot, \cdot \rangle$ denotes the duality between $\rV$ and $\rV^\prime$.
\end{lemma}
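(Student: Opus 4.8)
The plan is to deduce \eqref{eq:conv_mart} from the termwise $\tilde{\mathbb{P}}$-almost sure convergence already recorded in Lemma~\ref{lemma5.8}, upgraded to convergence of expectations by a uniform integrability argument and Vitali's convergence theorem. First note that $\tilde{M}_n(t),\tilde{M}_n(s)\in\rH_n$, so that the $\rV$--$\rV'$ duality reduces to the $\rH$ inner product and we may insert the projection,
\[
\langle \tilde{M}_n(t) - \tilde{M}_n(s), \psi\rangle = \langle \tilde{M}_n(t) - \tilde{M}_n(s), P_n\psi\rangle .
\]
Expanding $\tilde{M}_n$ via \eqref{eq:5.31} decomposes the right-hand side into precisely the five increments handled in parts (a)--(e) of Lemma~\ref{lemma5.8}, namely $\langle \tilde u_n(t)-\tilde u_n(s),P_n\psi\rangle$ and the four time integrals of $\mathrm{A}\tilde u_n$, $B(\tilde u_n)$, $|\nabla\tilde u_n|^2_{L^2}\tilde u_n$ and $C_j^2\tilde u_n$ against $P_n\psi$. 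Since $\psi\in\rV$ (part (c) uses the $\rV$-regularity of the test function), Lemma~\ref{lemma5.8} gives
\[
\langle \tilde{M}_n(t)-\tilde{M}_n(s),\psi\rangle \;\longrightarrow\; \langle \tilde{M}(t)-\tilde{M}(s),\psi\rangle \qquad \tilde{\mathbb{P}}\text{-a.s.},
\]
the limit being the corresponding expansion of \eqref{eq:5.35}. As $\tilde u_n\to\tilde u$ in $\ccal([0,T];\rH)$ $\tilde{\mathbb{P}}$-a.s. by \eqref{eq:5.26} and $h$ is continuous and bounded, also $h(\tilde u_{n|[0,s]})\to h(\tilde u_{|[0,s]})$ $\tilde{\mathbb{P}}$-a.s., so the full integrand converges $\tilde{\mathbb{P}}$-a.s.

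The second step is to show that $\langle \tilde{M}_n(t)-\tilde{M}_n(s),\psi\rangle$ is bounded in $L^2(\tilde\Omega)$ uniformly in $n$; since $h$ is bounded, this makes the products uniformly integrable. Each of the five terms is estimated exactly as in the Aldous bounds \eqref{eq:5.21}--\eqref{eq:5.25}, now integrated over $[s,t]\subset[0,T]$ and taken to the second moment, using $|\tilde u_n(\sigma)|_\rH=1$ (from Lemma~\ref{lemma5.3} together with the equality of laws, which places $\tilde u_n(\sigma)$ on $\mathcal{M}$). The Stokes term and the $C_j^2$-term are controlled in $L^2(\tilde\Omega)$ by \eqref{eq:5.28}, while the increment $\langle\tilde u_n(t)-\tilde u_n(s),\psi\rangle$ and the constraint term $|\nabla\tilde u_n|^2_{L^2}\langle\tilde u_n,\psi\rangle$ are controlled by \eqref{eq:5.27} with $p=1$.

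The nonlinear term is the crux and needs more care. Using \eqref{eq:2.3} with $|\tilde u_n|_\rH=1$ and the H\"older inequality in time,
\[
\int_s^t |b(\tilde u_n,\tilde u_n,\psi)|\,d\sigma \le \sqrt2\,|\psi|_\rH\,T^{3/4}\Bigl(\sup_{\sigma\in[0,T]}\|\tilde u_n(\sigma)\|_\rV\Bigr)\Bigl(\int_0^T |\tilde u_n(\sigma)|^2_{\rD(\rA)}\,d\sigma\Bigr)^{1/4}.
\]
Taking second moments and applying Cauchy--Schwarz in $\tilde\Omega$ bounds the square of this by a constant multiple of $\bigl(\tilde{\E}\sup_{\sigma}\|\tilde u_n\|_\rV^{4}\bigr)^{1/2}\bigl(\tilde{\E}\int_0^T|\tilde u_n|_{\rD(\rA)}^2\,d\sigma\bigr)^{1/2}$, which is finite and $n$-uniform by \eqref{eq:5.27} with $p=2$ and \eqref{eq:5.28}. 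The admissibility of $p=2$ is exactly where assumption $(A.1)$ enters: $K_c^2<1$ guarantees $2<1+\tfrac{1}{K_c^2}$. I expect this to be the main obstacle, since without the higher moment ($p=2$) bound the nonlinearity is only controlled in $L^1(\tilde\Omega)$, which is too weak to yield uniform integrability.

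Finally, the $\tilde{\mathbb{P}}$-a.s. convergence of the integrand together with its uniform $L^2(\tilde\Omega)$ bound gives, by Vitali's convergence theorem, convergence in $L^1(\tilde\Omega)$ of $\langle\tilde{M}_n(t)-\tilde{M}_n(s),\psi\rangle\,h(\tilde u_{n|[0,s]})$ to $\langle\tilde{M}(t)-\tilde{M}(s),\psi\rangle\,h(\tilde u_{|[0,s]})$, which is precisely the asserted limit \eqref{eq:conv_mart}.
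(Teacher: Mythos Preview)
Your argument is correct and follows the same overall architecture as the paper: $\tilde{\mathbb{P}}$-a.s.\ convergence of the integrand via Lemma~\ref{lemma5.8} and the continuity of $h$, then uniform integrability through a uniform $L^2(\tilde\Omega)$ bound, then Vitali. The difference lies entirely in how the $L^2$ bound is obtained. You expand $\tilde M_n$ through \eqref{eq:5.31} and estimate each of the five pieces separately, reusing the Aldous-type bounds; the nonlinear term then forces you to invoke \eqref{eq:5.27} with $p=2$, which is where the assumption $K_c^2<1$ is spent. The paper instead exploits that $\tilde M_n$ is itself a square-integrable martingale with quadratic variation \eqref{eq:5.32}: one application of the Burkholder--Davis--Gundy inequality gives
\[
\tilde{\E}\Bigl[\sup_{t\in[0,T]}|\tilde M_n(t)|_\rH^2\Bigr]\le c\,\tilde{\E}\sum_{j=1}^m\int_0^T|P_nC_j\tilde u_n(\sigma)|_\rH^2\,d\sigma\le c\,mK_c^2\,T\,\tilde{\E}\sup_{\sigma\in[0,T]}\|\tilde u_n(\sigma)\|_\rV^2,
\]
so only $p=1$ in \eqref{eq:5.27} is needed and the nonlinear term never has to be estimated in $L^2$ directly. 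Your route works but is less economical; the BDG shortcut is worth knowing, since it sidesteps the higher-moment requirement entirely.

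One small slip: the constraint term $\int_s^t|\nabla\tilde u_n|^2_{L^2}\langle\tilde u_n,\psi\rangle\,d\sigma$ is \emph{not} bounded in $L^2(\tilde\Omega)$ by \eqref{eq:5.27} with $p=1$ alone, since squaring $\int_s^t\|\tilde u_n\|_\rV^2\,d\sigma$ naturally produces a fourth moment. You can either use $p=2$ here as well, or observe that $\|\tilde u_n(\sigma)\|_\rV^2=\langle\rA\tilde u_n(\sigma),\tilde u_n(\sigma)\rangle_\rH\le|\tilde u_n(\sigma)|_{\rD(\rA)}$ (since $|\tilde u_n(\sigma)|_\rH=1$) and then control it through \eqref{eq:5.28}. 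Either fix is immediate and does not affect the validity of your proof.
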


\begin{proof}
Let us fix $s, t \in [0,T], s \leq t$ and $\psi \in \mathrm{V}$. By \eqref{eq:5.31}, we have
\begin{align*}
&\langle\tilde{M}_n(t) - \tilde{M}_n(s), \psi\rangle = \langle \tilde{u}_n(t), P_n \psi\rangle_H - \langle\tilde{u}_n(s), P_n \psi \rangle_{\mathrm{H}} + \int_s^t \langle \mathrm{A} \tunsi, P_n \psi \rangle_{\mathrm{H}}~d \sigma\\
&~~~~  + \int_s^t \langle B(\tunsi), P_n \psi \rangle \,d \sigma - \int_s^t |\nabla \tunsi|_{L^2}^2 \langle \tunsi, P_n \psi \rangle_{\mathrm{H}}~d \sigma \\
&~~~~ - \dfrac12  \int_s^t \langle C^2 \tunsi, P_n \psi \rangle_{\mathrm{H}}~d \sigma.
\end{align*}
By Lemma \ref{lemma5.8}, we infer that
\begin{equation}
\label{eq:5.37}
\lim_{n \rightarrow \infty} \langle \tilde{M}_n(t) - \tilde{M}_n(s), \psi \rangle = \langle \tilde{M}(t) - \tilde{M}(s), \psi \rangle, \quad \tilde{\mathbb{P}}\text{-a.s.}
\end{equation}
In order to prove \eqref{eq:conv_mart} we first observe that since $\tun \to \tu$ in $\mathcal{Z}_T$, in particular in $\ccal([0,T]; \rH)$ and $h$ is a bounded continuous function  on $\ccal([0,T]; \rH)$, we get
\begin{equation}
\label{eq:h_conv}
\lim_{n \rightarrow \infty} h(\tilde{u}_{n|[0,s]}) = h(\tilde{u}_{|[0,s]})~~~\tilde{\mathbb{P}}-a.s.
\end{equation}
and
\begin{equation}
\label{eq:h_bounded}
\sup_{n \in \mathbb{N}} |h(\tilde{u}_{n|[0,s]})|_{L^{\infty}} < \infty.
\end{equation}
Let us define a sequence of $\R-$valued random variables:
\[ f_n(\omega):= \left[ \langle \tilde{M}_n(t, \omega), \psi \rangle - \langle \tilde{M}_n(s, \omega), \psi \rangle \right]h(\tilde{u}_{n|[0,s]}), ~~~~ \omega \in \tilde{\Omega}.\]
\noindent We will prove that the functions $\{f_n\}_{n \in \mathbb{N}}$ are uniformly integrable in order to apply the Vitali theorem later on. We claim that
\begin{equation}
\label{eq:5.38}
\sup_{n \geq 1} \tilde{\Eb}[|f_n|^2] < \infty.
\end{equation}
By the Cauchy-Schwarz inequality and the embedding $\rV^\prime \hookrightarrow \rH$, for each $n \in \mathbb{N}$ there exists a positive constant $c$ such that
\begin{equation}
\label{eq:5.39}
\tilde{\Eb} [|f_n|^2] \leq 2c |h|^2_{L^{\infty}}|\psi|^2_{\mathrm{V}} \tilde{\Eb}\left[|\tilde{M}_n(t)|^2_\rH + |\tilde{M}_n(s)|^2_\rH \right].
\end{equation}
Since $\tilde{M}_n$ is a continuous martingale with quadratic variation defined in \eqref{eq:5.32}, by the Burkholder-Davis-Gundy inequality we obtain
\begin{equation}
\label{eq:5.40}
\tilde{\Eb} \left[ \sup_{t \in [0,T]} |\tilde{M}_n(t)|_{\mathrm{H}}^2 \right] \leq c \tilde{\Eb} \left[  \sum_{j=1}^m \int_0^T |P_n C_j \tunsi|_\rH^2 \,d \sigma \right].
\end{equation}
Since $P_n \colon \rH \to \rH$ is a contraction then by Assumption $(A.1)$ and \eqref{eq:5.27} for $p = 1$, we have
\begin{align}
\label{eq:5.41}
\tilde{\Eb} \left[\sum_{j=1}^m \int_0^T |P_n C_j \tunsi|_\rH^2~d \sigma  \right] & \leq \tilde{\Eb} \left[ m K_c^2 \int_0^T \|\tunsi\|_{\mathrm{V}}^2\,d \sigma\right]\nonumber \\
& \leq m K_c^2 \tilde{\Eb}\left[ \sup_{\sigma \in [0,T]}\|\tunsi\|_{\mathrm{V}}^2\right]T < \infty.
\end{align}
Then by \eqref{eq:5.39} and \eqref{eq:5.41} we see that \eqref{eq:5.38} holds. Since the sequence $\{f_n\}_{n \in \mathbb{N}}$ is uniformly integrable and by \eqref{eq:5.37} it is $\tilde{\mathbb{P}}-$a.s. point-wise convergent, then application of the Vitali theorem completes the proof of the lemma.
\end{proof}

From Lemma~\ref{lemma5.7} and Lemma~\ref{lemma5.9} we have the following corollary.
\begin{corollary}
\label{cor_martingale}
For all $s,t \in [0,T]$ such that $s \leq t \colon$
\[\E \left(\tilde{M}(t) - \tilde{M}(s) \big|\tilde{\mathcal{F}}_t \right) = 0\,.\]
\end{corollary}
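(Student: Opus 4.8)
The plan is to obtain the martingale identity for $\tilde{M}$ by passing to the limit in the corresponding identity for the Galerkin martingales $\tilde{M}_n$, and then to upgrade the resulting weak orthogonality relation to a statement about conditional expectations by two routine density arguments. Throughout, the filtration in question is understood to be the past $\tilde{\mathcal{F}}_s=\sigma\{\tilde{u}(r):r\le s\}$, so that the claim is the martingale identity $\tilde{\Eb}\big(\tilde{M}(t)-\tilde{M}(s)\mid\tilde{\mathcal{F}}_s\big)=0$.

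First I would fix $s,t \in [0,T]$ with $s \le t$, a test vector $\psi \in \rV$, and a bounded continuous function $h$ on $\ccal([0,s];\rH)$. By Lemma~\ref{lemma5.7} each $\tilde{M}_n$ is a martingale with respect to $\tilde{\mathbb{F}}_n$, so its defining property \eqref{eq:5.33} reads $\tilde{\Eb}\big[\langle \tilde{M}_n(t)-\tilde{M}_n(s),\psi\rangle\,h(\tilde{u}_{n|[0,s]})\big]=0$ for every $n$. Letting $n\to\infty$ and invoking Lemma~\ref{lemma5.9}, whose conclusion \eqref{eq:conv_mart} is exactly the required convergence of these expectations, gives
\[
  \tilde{\Eb}\big[\langle \tilde{M}(t)-\tilde{M}(s),\psi\rangle\,h(\tilde{u}_{|[0,s]})\big]=0,\qquad \psi\in\rV.
\]

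Next I would remove the restriction $\psi\in\rV$. The integrability estimates established in the proof of Lemma~\ref{lemma_mart}, together with the quadratic-variation bound \eqref{eq:5.41}, show that $\tilde{M}(t)-\tilde{M}(s)\in L^2(\tilde{\Omega};\rH)$; hence $\big|\tilde{\Eb}[\langle \tilde{M}(t)-\tilde{M}(s),\psi\rangle\,h(\tilde{u}_{|[0,s]})]\big|\le |h|_{L^\infty}\,\tilde{\Eb}[|\tilde{M}(t)-\tilde{M}(s)|_\rH]\,|\psi|_\rH$, and for $\psi\in\rV$ the duality pairing coincides with the $\rH$-inner product since $\tilde{M}(t)-\tilde{M}(s)\in\rH$. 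As $\rV$ is dense in $\rH$, the displayed identity therefore extends by continuity to all $\psi\in\rH$. I would then pass from the test functionals $h(\tilde{u}_{|[0,s]})$ to arbitrary $\tilde{\mathcal{F}}_s$-measurable events. The maps $\omega\mapsto h(\tilde{u}_{|[0,s]}(\omega))$, with $h$ bounded continuous on $\ccal([0,s];\rH)$, form a multiplicative class generating $\tilde{\mathcal{F}}_s$, so a functional monotone-class argument upgrades the identity to $\tilde{\Eb}[\langle \tilde{M}(t)-\tilde{M}(s),\psi\rangle\,\mathbf{1}_A]=0$ for every $A\in\tilde{\mathcal{F}}_s$ and every $\psi\in\rH$. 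Choosing $\psi$ in a countable dense subset of $\rH$ then forces the $\rH$-valued integrable random variable $\tilde{\Eb}[\tilde{M}(t)-\tilde{M}(s)\mid\tilde{\mathcal{F}}_s]$ to vanish $\tp$-a.s., which is precisely the assertion of the corollary.

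The analytic heart of the argument, namely the interchange of limit and expectation, is already contained in Lemma~\ref{lemma5.9}, where it was justified through the uniform integrability bound \eqref{eq:5.38} and the Vitali theorem; what remains here is genuinely routine. The only points requiring care are the two density passages (in $\psi\in\rH$ and in the test functional) and the observation that the cylindrical path functionals generate $\tilde{\mathcal{F}}_s$, so that testing against bounded continuous functions of $\tilde{u}_{|[0,s]}$ suffices to identify the conditional expectation.
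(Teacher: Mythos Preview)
Your proposal is correct and follows exactly the route the paper intends: the corollary is stated immediately after Lemma~\ref{lemma5.9} as a direct consequence of Lemmas~\ref{lemma5.7} and~\ref{lemma5.9}, with no further proof given, and your argument (pass to the limit in \eqref{eq:5.33} via \eqref{eq:conv_mart}, then perform the two routine density extensions) is precisely how one unpacks that sentence. You also correctly read the conditioning $\sigma$-field as $\tilde{\mathcal{F}}_s$ rather than the $\tilde{\mathcal{F}}_t$ appearing in the statement, which is evidently a typo.
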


\begin{lemma}
\label{lemma5.10}
For all $s,t \in [0,T]$ such that $s \leq t$ and all $\psi, \zeta \in \rV$:
\begin{align*}
\lim_{n \rightarrow \infty} & \tilde{\Eb} \Big[ \Big( \langle \tilde{M}_n(t), \psi \rangle \langle \tilde{M}_n(t), \zeta \rangle - \langle \tilde{M}_n(s), \psi \rangle \langle \tilde{M}_n(s), \zeta \rangle \Big) h(\tilde{u}_{n|[0,s]}) \Big] \\
& = \tilde{\Eb} \Big[\Big( \langle \tilde{M}(t), \psi \rangle \langle \tilde{M}(t), \zeta \rangle - \langle \tilde{M}(s), \psi \rangle \langle \tilde{M}(s), \zeta \rangle \Big) h(\tilde{u}_{|[0,s]}) \Big],
\end{align*}
where $\langle \cdot, \cdot \rangle$ denotes the dual pairing between $\rV^\prime$ and $\rV$.
\end{lemma}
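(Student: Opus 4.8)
The plan is to reproduce, at the level of the quadratic terms, the three-step argument used in the proof of Lemma~\ref{lemma5.9}: first establish $\tilde{\mathbb{P}}$-almost sure pointwise convergence of the integrand, then prove that the integrands are uniformly integrable, and finally pass to the limit by the Vitali theorem. Fix $s,t \in [0,T]$ with $s \le t$ and $\psi, \zeta \in \rV$, and introduce the sequence of $\R$-valued random variables
\[ g_n := \Big( \langle \tilde{M}_n(t), \psi \rangle \langle \tilde{M}_n(t), \zeta \rangle - \langle \tilde{M}_n(s), \psi \rangle \langle \tilde{M}_n(s), \zeta \rangle \Big) h(\tilde{u}_{n|[0,s]}). \]

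For the pointwise convergence I would apply Lemma~\ref{lemma5.8} with lower integration limit equal to $0$ (and also on $[s,t]$): recalling the definitions \eqref{eq:5.31} and \eqref{eq:5.35} of $\tilde{M}_n$ and $\tilde{M}$, parts $(a)$--$(e)$ give $\langle \tilde{M}_n(r), \psi\rangle \to \langle \tilde{M}(r), \psi\rangle$ and $\langle \tilde{M}_n(r), \zeta\rangle \to \langle \tilde{M}(r), \zeta\rangle$, $\tilde{\mathbb{P}}$-a.s., for each fixed $r \in \{s,t\}$. Since the product of convergent real sequences converges, and since $h(\tilde{u}_{n|[0,s]}) \to h(\tilde{u}_{|[0,s]})$ $\tilde{\mathbb{P}}$-a.s. by \eqref{eq:h_conv}, the integrand $g_n$ converges $\tilde{\mathbb{P}}$-a.s. to the limit appearing on the right-hand side of the claimed identity.

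The main work is the uniform integrability, and the decisive ingredient is a uniform fourth-moment bound on $\tilde{M}_n$. Using the embedding $\rV^\prime \hookrightarrow \rH$ there is a constant $c$ with $|\langle \tilde{M}_n(r), \psi\rangle| \le c\,|\tilde{M}_n(r)|_\rH\,\|\psi\|_\rV$, so that, together with \eqref{eq:h_bounded},
\[ |g_n|^2 \le C\,\|\psi\|_\rV^2\,\|\zeta\|_\rV^2\,|h|^2_{L^\infty}\,\sup_{r \in [0,T]} |\tilde{M}_n(r)|^4_\rH. \]
Since $\tilde{M}_n$ is a continuous martingale with quadratic variation \eqref{eq:5.32}, the Burkholder--Davis--Gundy inequality gives
\[ \tilde{\Eb}\Big[\sup_{r \in [0,T]} |\tilde{M}_n(r)|^4_\rH\Big] \le C\,\tilde{\Eb}\Big[\Big(\int_0^T \sum_{j=1}^m |P_n C_j \tuns|^2_\rH\,ds\Big)^2\Big]. \]
Bounding $\sum_{j} |P_n C_j \tuns|^2_\rH \le m K_c^2 \|\tuns\|^2_\rV$ via Assumption $(A.1)$ and the contraction property of $P_n$, the right-hand side is dominated by $C\,\tilde{\Eb}\big[\sup_{s \in [0,T]}\|\tuns\|^4_\rV\big]$. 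The key observation is that Assumption $(A.1)$ forces $K_c^2 < 1$, hence $1+\tfrac{1}{K_c^2} > 2$, so $p=2$ lies in the admissible range $[1, 1 + \tfrac{1}{K_c^2})$; the a'priori estimate \eqref{eq:5.27} with $p=2$ then makes this quantity finite uniformly in $n$. This establishes $\sup_{n} \tilde{\Eb}[|g_n|^2] < \infty$, which is precisely the uniform integrability of $\{g_n\}_{n}$.

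Combining the $\tilde{\mathbb{P}}$-a.s. pointwise convergence with this uniform integrability, the Vitali theorem permits passage to the limit under $\tilde{\Eb}$, yielding the asserted identity. I expect the only genuinely delicate point to be the moment upgrade: whereas Lemma~\ref{lemma5.9} needed only a second-moment control of $\tilde{M}_n$ obtainable from the $p=1$ estimate, the product structure here demands a fourth-moment control, and it is exactly the smallness condition $K_c^2 < 1$ that guarantees the admissibility of $p=2$ in \eqref{eq:5.27} and thereby closes the argument.
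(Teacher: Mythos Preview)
Your proof is correct and follows essentially the same approach as the paper: both establish $\tilde{\mathbb{P}}$-a.s.\ pointwise convergence via Lemma~\ref{lemma5.8} and \eqref{eq:h_conv}, obtain uniform integrability through a Burkholder--Davis--Gundy bound on $\tilde{M}_n$ controlled by the a'priori estimate \eqref{eq:5.27}, and conclude by the Vitali theorem. The only inessential difference is that you fix the integrability exponent at $r=2$ (justified by $K_c^2<1$ so that $2\in[1,1+1/K_c^2)$), whereas the paper phrases the bound for a general $r\in(1,1+1/K_c^2)$.
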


\begin{proof}
Let us fix $s, t \in [0,T]$ such that $s \leq t$ and $\psi, \zeta \in \rV$ and define the random variables $f_n$ and $f$ by
\begin{align*}
&f_n(\omega) := \Big( \langle \tilde{M}_n(t, \omega), \psi \rangle \langle \tilde{M}_n(t, \omega), \zeta \rangle - \langle \tilde{M}_n(s, \omega), \psi \rangle \langle \tilde{M}_n(s, \omega), \zeta \rangle \Big) h(\tilde{u}_{n|[0,s]}(\omega)),\\
&f(\omega) := \Big( \langle \tilde{M}(t, \omega), \psi \rangle \langle \tilde{M}(t, \omega), \zeta \rangle - \langle \tilde{M}(s, \omega), \psi \rangle \langle \tilde{M}(s, \omega), \zeta \rangle \Big) h(\tilde{u}_{|[0,s]}(\omega)),~~~~ \omega \in \tilde{\Omega}.
\end{align*}
By \eqref{eq:5.37} and \eqref{eq:h_conv} we infer that $\lim_{n \rightarrow \infty} f_n(\omega) = f(\omega)$, for $\tilde{\mathbb{P}}$ almost all $\omega \in \tom$.\\
We will prove that the functions $\{f_n\}_{n \in \mathbb{N}}$ are uniformly integrable. We claim that for some $r > 1$,
\begin{equation}
\label{eq:5.42}
\sup_{n \ge 1} \tilde{\Eb} \left[|f_n|^r\right] < \infty.
\end{equation}
For each $n \in \mathbb{N}$, as before we have
\begin{equation}
\label{eq:5.43}
\tilde{\Eb} \left[ |f_n|^r\right] \leq C \|h\|_{L^{\infty}}^r \|\psi\|^r_{\mathrm{V}} \|\zeta\|^r_{\mathrm{V}} \tilde{\Eb} \left[|\tilde{M}_n(t)|^{2r} + |\tilde{M}_n(s)|^{2r} \right].
\end{equation}
Since $\tilde{M}_n$ is a continuous martingale with quadratic variation defined in \eqref{eq:5.31}, by the Burkholder-Davis-Gundy inequality we obtain
\begin{equation}
\label{eq:5.44}
\tilde{\Eb} \left[ \sup_{t \in [0,T]} |\tilde{M}_n(t)|^{2r} \right] \leq c \tilde{\Eb} \left[ \sum_{j=1}^m \int_0^T |P_n C_j \tunsi|_{\mathrm{H}}^2\,d \sigma \right]^r.
\end{equation}
Since $P_n \colon \rH \to \rH$ is a contraction, by Assumption $(A.1)$ we have
\begin{align}
\label{eq:5.45}
\tilde{\Eb} \left[ \sum_{j=1}^m \int_0^T |P_n C_j \tunsi|_{\mathrm{H}}^2\,d\sigma \right]^r & \leq \tilde{\Eb} \left[ m K_c^2 \int_0^T \|\tunsi\|_{\mathrm{V}}^2\,d \sigma \right]^r\nonumber \\
& \leq (m K_c^2)^r \, \tilde{\Eb}\left(\sup_{\sigma \in [0,T]}\|\tunsi\|_{\mathrm{V}}^{2r}\right) T^r.
\end{align}
Thus for $r \in (1, 1 + \frac{1}{K_c^2})$, by \eqref{eq:5.43}, \eqref{eq:5.44}, \eqref{eq:5.45} and \eqref{eq:5.27} we infer that condition \eqref{eq:5.42} holds. Hence, by the Vitali theorem we infer that 
\begin{equation}
\label{eq:5.46}
\lim_{n \rightarrow \infty} \tilde{\Eb}[f_n] = \tilde{\Eb}[f].
\end{equation}
The proof of the lemma is thus complete.
\end{proof}

\begin{lemma}[Convergence of the quadratic variations]
\label{lemma5.11} For any $s, t \in [0, T]$ and $\psi, \zeta \in \rV$, for all $h \in \ccal([0,T];\rH)$ we have
\begin{align*}
\lim_{n \rightarrow \infty} & \tilde{\Eb}\left[ \left( \sum_{j=1}^m \int_s^t \left\langle \left(C_j \tunsi \right)^\ast P_n \psi , \left(C_j \tunsi\right)^\ast P_n \zeta \right\rangle_{\R} d\sigma \right) \cdot h(\tilde{u}_{n|[0,s]})\right]\\
& = \tilde{\Eb} \left[ \left( \sum_{j=1}^m \int_s^t \left\langle \left(C_j \tusi\right)^\ast \psi , \left(C_j \tusi\right)^\ast \zeta \right\rangle_{\R} \,d \sigma \right) \cdot h(\tilde{u}_{|[0,s]})\right].
\end{align*}
\end{lemma}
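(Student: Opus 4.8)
The plan is to follow exactly the Vitali-theorem scheme already used in Lemma~\ref{lemma5.9} and Lemma~\ref{lemma5.10}: first establish $\tp$-almost sure convergence of the real-valued integrals, then prove uniform integrability of an $r$-th power for a suitable $r>1$, and finally invoke the Vitali theorem. I first record the interpretation of the integrand: viewing $C_j\tunsi \in \rH$ as the operator $\R \ni \lambda \mapsto \lambda\, C_j\tunsi$, its adjoint acts by $(C_j\tunsi)^\ast P_n\psi = \langle C_j\tunsi, P_n\psi\rangle_\rH \in \R$, so that
\[
\left\langle (C_j\tunsi)^\ast P_n\psi,\, (C_j\tunsi)^\ast P_n\zeta \right\rangle_\R = \langle C_j\tunsi, P_n\psi\rangle_\rH\, \langle C_j\tunsi, P_n\zeta\rangle_\rH .
\]
Throughout I will use that, by Assumption $(A.1)$ and the definition $C_j=\Pi\tilde{C}_j$, the operator $C_j$ is bounded from $\rV$ to $\rH$ with $|C_j u|_\rH \le K_c\|u\|_\rV$, together with the fact that $P_n$ is an $\rH$-contraction and that $P_n\psi\to\psi$, $P_n\zeta\to\zeta$ in $\rH$.

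For the $\tp$-a.s. pointwise (in $\omega$) convergence of the bracketed integral I would use the telescoping identity $ab-a'b' = (a-a')b + a'(b-b')$ applied with $a=\langle C_j\tunsi, P_n\psi\rangle_\rH$, $a'=\langle C_j\tusi,\psi\rangle_\rH$ and the analogous $b,b'$ built from $\zeta$. Each factor difference splits further as
\[
|\langle C_j\tunsi, P_n\psi\rangle_\rH - \langle C_j\tusi,\psi\rangle_\rH| \le K_c\|\tunsi-\tusi\|_\rV\,|\psi|_\rH + K_c\|\tusi\|_\rV\,|P_n\psi-\psi|_\rH,
\]
and likewise for $\zeta$. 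After integrating in $\sigma$ over $[s,t]$ and applying the Cauchy-Schwarz inequality in time, the first contribution is controlled by $|\tun-\tu|_{L^2(0,T;\rV)}$, which tends to $0$ by \eqref{eq:5.26}, while the second is controlled by $|P_n\psi-\psi|_\rH$ (respectively $|P_n\zeta-\zeta|_\rH$) times the uniformly bounded $L^2(0,T;\rV)$-norms of $\tun$ and $\tu$; both vanish as $n\to\infty$. Multiplying by $h(\tilde{u}_{n|[0,s]})$, which converges to $h(\tilde{u}_{|[0,s]})$ $\tp$-a.s. by \eqref{eq:h_conv}, then gives $\tp$-a.s. convergence of the full integrand $f_n$ to its limit $f$.

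To apply Vitali I would show $\sup_n\tilde{\Eb}[|f_n|^r]<\infty$ for some $r>1$. Using $|\langle C_j\tunsi, P_n\psi\rangle_\rH|\le K_c\|\tunsi\|_\rV|\psi|_\rH$ and the analogous bound with $\zeta$, the bracketed term is dominated by $m K_c^2|\psi|_\rH|\zeta|_\rH\int_0^T\|\tunsi\|_\rV^2\,d\sigma \le mK_c^2 T|\psi|_\rH|\zeta|_\rH\sup_{\sigma\in[0,T]}\|\tunsi\|_\rV^2$, exactly as in \eqref{eq:5.45}. Hence $|f_n|^r\le C\,|h|_{L^\infty}^r\,\sup_{\sigma\in[0,T]}\|\tunsi\|_\rV^{2r}$ with $C$ depending only on $m,K_c,T,\psi,\zeta$, and choosing $r\in(1,1+\tfrac{1}{K_c^2})$ makes the right-hand side integrable with a bound uniform in $n$ by \eqref{eq:5.27}. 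Combined with the $\tp$-a.s. convergence of the previous step, the Vitali theorem yields the asserted convergence of expectations. The only delicate point is the product structure of the integrand: the telescoping must be arranged so that each difference carries at most one factor of $\tun-\tu$, and the sharp exponent range $r<1+1/K_c^2$ is precisely what allows the a'priori estimate \eqref{eq:5.27} to supply uniform integrability.
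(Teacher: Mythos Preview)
Your proposal is correct and follows essentially the same Vitali-theorem scheme as the paper: the uniform integrability bound via $\sup_\sigma\|\tunsi\|_\rV^{2r}$ with $r\in(1,1+1/K_c^2)$ and \eqref{eq:5.27} is identical, and the pointwise convergence is obtained by the same split into a $P_n\psi-\psi$ contribution and a $\tun-\tu$ contribution in $L^2(0,T;\rV)$. The only cosmetic difference is that the paper first proves $(C_j\tunsi)^\ast P_n\psi\to(C_j\tusi)^\ast\psi$ in $L^2(s,t;\R)$ for each factor and then passes to the product, whereas you telescope the product $ab-a'b'$ directly; the two organizations are equivalent.
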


\begin{proof}
Let us fix $\psi, \zeta \in \rV$ and define a sequence of random variables by
\[ f_n(\omega) := \left( \sum_{j=1}^m \int_s^t \left \langle \left(C_j \tilde{u}_n(\sigma, \omega)\right)^\ast P_n \psi , \left(C_j \tilde{u}_n(\sigma, \omega)\right)^\ast P_n \zeta \right\rangle_{\R} d \sigma \right) \cdot h(\tilde{u}_{n|[0,s]}),~~~ \omega \in \tilde{\Omega}.\]
We will prove that these random variables are uniformly integrable and convergent $\tilde{\mathbb{P}}-$a.s. to some random variable $f$. In order to do that we will show that for some $r > 1$,
\begin{equation}
\label{eq:quad1}
\sup_{n \geq 1} \tilde{\Eb}\,|f_n|^r < \infty.
\end{equation}

Since $P_n \colon \rH \to \rH$ is a contraction, by the Cauchy-Schwarz inequality, and Assumption $(A.1)$ there exists a positive constant $c$ such that
\begin{align*}
 \left|\left(C_j\tilde{u}_n(\sigma, \omega) \right)^\ast P_n \psi\right|_\R & \leq \left|\left(C_j \tilde{u}_n(\sigma, \omega)\right)^\ast\right|_{L(\rH;\R)} |P_n \psi|_\rH \leq |C_j \tilde{u}_n(\sigma, \omega)|_{L(\R;\rH)} |\psi|_\rH \\
 & \leq K_c \,\|\tilde{u}_n(\sigma, \omega)\|_{\mathrm{V}} |\psi|_{\mathrm{H}}, \quad \quad \quad \quad j \in \{1, \cdots, m\},
 \end{align*}
where $L(X,Y)$ denotes the operator norm of the linear operators from $X$ to $Y$. Thus using the H\"older inequality, we obtain
\begin{align}
\label{eq:quad2}
\tilde{\E}\,|f_n|^r &= \tilde{\E}\,\left| \left( \sum_{j=1}^m \int_s^t \left\langle \left(C_j \tunsi\right)^\ast P_n \psi ,\left(C \tunsi\right)^\ast P_n \zeta \right\rangle_\R d \sigma \right) \cdot h(\tilde{u}_{n|[0,s]})\right|^r \nonumber \\
& \leq |h|^{r}_{{L}^\infty} \tilde{\E}\, \left( \sum_{j=1}^m \int_s^t \left| \left(C_j\tilde{u}_n(\sigma) \right)^\ast P_n \psi\right|_{\R} \cdot \left| \left(C_j \tilde{u}_n(\sigma) \right)^\ast P_n \zeta\right|_{\R} \, d\sigma \right)^r \nonumber \\
& \leq (m K_c^2)^r\, |h|_{{L}^\infty} ^{r}|\psi|_{\mathrm{H}}^r |\zeta|_{\mathrm{H}}^r \tilde{\E} \left(\int_s^t \|\tilde{u}_n(\sigma)\|_{\mathrm{V}}^2 \, d \sigma \right)^r \nonumber \\
& \le (m K_c^2)^r\, |h|_{{L}^\infty} ^{r}|\psi|_{\mathrm{H}}^r |\zeta|_{\mathrm{H}}^r \tilde{\E}\left(\sup_{\sigma \in [0,T]} \|\tunsi\|^{2r}_\rV \right)T^r.
\end{align}
Therefore using \eqref{eq:quad2} and \eqref{eq:5.27} we infer that \eqref{eq:quad1} holds for every $r \in (1, 1+ \frac{1}{K_c^2})$.

Now for pointwise convergence we will show that for a fix $\omega \in \tilde{\Omega}$,
\begin{align}
\label{eq:quad3}
\lim_{n \rightarrow \infty} & \int_s^t \sum_{j=1}^m \left\langle \left(C_j \tilde{u}_n(\sigma, \omega)\right)^\ast P_n \psi, \left(C_j \tilde{u}_n(\sigma, \omega)\right)^\ast P_n \zeta \right\rangle_{\R} \, d\sigma \\ &= \int_s^t \sum_{j=1}^m \left\langle \left(C_j \tilde{u}(\sigma, \omega) \right)^\ast \psi, \left(C_j \tilde{u}(\sigma, \omega)\right)^\ast \zeta \right\rangle_{\R} \, d \sigma.
\nonumber
\end{align}
Let us fix $\omega \in \tom$ such that
\begin{itemize}
\item[(i)] $\tilde{u}_n(\cdot, \omega) \to \tilde{u}(\cdot, \omega)$ in $L^2(0,T; \rV)$,

\item[(ii)] and the sequence $(\tilde{u}_n(\cdot, \omega))_{n \ge 1}$ is uniformly bounded in $L^2(0,T; \rV)$.
\end{itemize}

Note that to prove \eqref{eq:quad3}, it is sufficient to prove that
\begin{equation}
\label{eq:5.47}
\left(C_j \tilde{u}_n(\sigma, \omega)\right)^\ast P_n \psi \rightarrow \left(C_j \tilde{u}(\sigma, \omega)\right)^\ast \psi~~ \text{in}~ L^2(s,t; \R),
\end{equation}
for every $j \in \{1, \cdots, m\}$. Using Cauchy-Schwarz inequaltiy we have
\begin{align*}
\int_s^t & \left|\left(C_j \tilde{u}_n(\sigma, \omega)\right)^\ast P_n \psi - \left(C_j \tilde{u}(\sigma, \omega)\right)^\ast \psi\right|^2_{\R} \,d\sigma\\
& \leq \int_s^t \Big(\left| \left(C_j \tilde{u}_n(\sigma, \omega)\right)^\ast\left(P_n \psi - \psi\right)\right|_{\R} + \left|\left(C_j \tilde{u}_n(\sigma, \omega) - C_j \tilde{u}(\sigma, \omega) \right)^\ast \psi\right|_{\R} \Big)^2 d\sigma \\
& \leq 2 \int_s^t \left|C_j \tilde{u}_n(\sigma, \omega)\right|^2_{L(\R; \rH)} \left|P_n \psi - \psi\right|^2_{\mathrm{H}} d\sigma + 2 \int_s^t \left|C_j \tilde{u}_n(\sigma, \omega) - C_j \tilde{u}_(\sigma, \omega)\right|_{L(\R;\rH)}^2 |\psi|^2_{\mathrm{H}}\, d \sigma\\
& =: I_n^1(t) + I_n^2(t).
\end{align*}
We will deal with each of the terms individually. We start with $I_n^1(t)$. Since
\[\lim_{n \rightarrow \infty}|P_n \psi - \psi|_{\mathrm{H}} = 0, \quad \psi \in \rV,\]
and by Assumption $(A.1)$, (ii) there exists a positive constant $K$ such that
\[\sup_{ n \ge 1} \int_s^t \left|C \tilde{u}_n(\sigma, \omega)\right|_{L(\R ;\mathrm{H})}^2\,d\sigma \leq K_c^2 \sup_{n \ge 1} \int_s^t \|\tilde{u}_n(\sigma, \omega)\|_{\mathrm{V}}^2\,d\sigma \leq K. \]
Thus we infer
\[\lim_{n \rightarrow \infty}I_n^1(t) = 0.\]

Next we consider $I_n^2(t)$. Using Assumption $(A.1)$ and (i) we can show that for every $j \in \{1, \cdots, m\}$,
\begin{align*}
\lim_{n \to \infty}&\int_s^t \left|C_j \tilde{u}_n(\sigma, \omega) - C_j \tilde{u}_(\sigma, \omega)\right|_{L(\R;\mathrm{H})}^2 |\psi|^2_{\mathrm{H}}\, d \sigma \\
&\leq \lim_{n \to \infty}\, |\psi|^2_{\mathrm{H}}\, K_1\,\int_s^t\|\tilde{u}_n(\sigma, \omega) - \tilde{u}(\sigma, \omega)\|_{\mathrm{V}}^2\, d\sigma = 0.
\end{align*}
Hence, we have proved \eqref{eq:5.47}, finishing the proof of lemma.
\end{proof}

By Lemma \ref{lemma5.9} we can pass to the limit in \eqref{eq:5.33}. By Lemmas \ref{lemma5.10} and \ref{lemma5.11} we can pass to the limit in \eqref{eq:5.34} as well. After passing to the limits we infer that for all $\psi, \zeta \in \rV$:
\begin{equation}
\label{eq:5.48}
\tilde{\Eb} \left[ \langle \tilde{M}(t) - \tilde{M}(s) , \psi \rangle h(\tilde{u}_{|[0,s]}) \right] = 0,
\end{equation}
and
\begin{align}
\label{eq:5.49}
\tilde{\Eb} \Big[ &\Big( \langle \tilde{M}(t), \psi \rangle \langle \tilde{M}(t), \zeta \rangle - \langle \tilde{M}(s), \psi \rangle \langle \tilde{M}(s), \zeta \rangle  \nonumber \\
& - \sum_{j=1}^m\int_s^t \left\langle \left(C_j \tusi\right)^\ast \psi , \left(C_j \tusi\right)^\ast \zeta \right\rangle_\R d \sigma \Big) \cdot h(\tilde{u}_{|[0,s]}) \Big] = 0.
\end{align}

From the two previous lemmas and Lemma~\ref{lemma5.7}, we infer the following corollary.
\begin{corollary}
\label{cor_quadratic}
For $t \in [0,T]$
\[\langle \langle \tilde{M}\rangle \rangle_t = \int_0^t \sum_{j=1}^m \left|C_j \tus\right|^2_\rH \,ds\,,\quad \quad t \in [0,T]\,.\]
\end{corollary}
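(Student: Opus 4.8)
The plan is to read off the identity for $\langle\langle\tilde{M}\rangle\rangle$ from the two martingale relations \eqref{eq:5.48} and \eqref{eq:5.49}, which hold for every $\psi,\zeta\in\rV$ and every bounded continuous $h$ on $\ccal([0,s];\rH)$ and which are themselves the limits of the finite-dimensional identities \eqref{eq:5.33}--\eqref{eq:5.34} supplied by Lemma~\ref{lemma5.7}. Since the random variables $h(\tilde{u}_{|[0,s]})$ are rich enough (by a standard monotone class argument) to characterise conditional expectations given $\tilde{\mathcal{F}}_s=\sigma\{\tilde{u}(r):r\le s\}$, the relation \eqref{eq:5.48} is equivalent to $\tilde{\Eb}[\langle\tilde{M}(t)-\tilde{M}(s),\psi\rangle\mid\tilde{\mathcal{F}}_s]=0$, which is exactly the martingale property already recorded in Corollary~\ref{cor_martingale}; while \eqref{eq:5.49} says that for each fixed $\psi,\zeta\in\rV$ the real-valued process
\[
t\mapsto \langle\tilde{M}(t),\psi\rangle\,\langle\tilde{M}(t),\zeta\rangle-\sum_{j=1}^m\int_0^t\langle(C_j\tus)^\ast\psi,(C_j\tus)^\ast\zeta\rangle_\R\,ds
\]
is an $\tilde{\mathbb{F}}$-martingale. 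Equivalently, the cross quadratic variation of the scalar martingales $\langle\tilde{M}(\cdot),\psi\rangle$ and $\langle\tilde{M}(\cdot),\zeta\rangle$ equals $\sum_{j}\int_0^t\langle(C_j\tus)^\ast\psi,(C_j\tus)^\ast\zeta\rangle_\R\,ds$.

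I would then specialise to the orthonormal basis $\{e_k\}$ of $\rH$. Because these are eigenvectors of $\mathrm{A}$, we have $e_k\in\rD(\mathrm{A})\subset\rV$, so \eqref{eq:5.49} applies with $\psi=\zeta=e_k$ and shows that
\[
\langle\tilde{M}(t),e_k\rangle^2-\sum_{j=1}^m\int_0^t|(C_j\tus)^\ast e_k|_\R^2\,ds
\]
is a martingale for every $k$. Summing over $k=1,\dots,N$ and using that $(C_j\tus)^\ast e_k=\langle C_j\tus,e_k\rangle_\rH$, Parseval's identity gives $\sum_{k}|(C_j\tus)^\ast e_k|^2_\R=|C_j\tus|^2_\rH$, so the partial sums converge to
\[
|\tilde{M}(t)|_\rH^2-\int_0^t\sum_{j=1}^m|C_j\tus|_\rH^2\,ds.
\]
By the defining property of the scalar quadratic variation of a continuous square-integrable $\rH$-valued martingale as the unique increasing process making $|\tilde{M}(t)|_\rH^2-\langle\langle\tilde{M}\rangle\rangle_t$ a martingale, this identifies $\langle\langle\tilde{M}\rangle\rangle_t=\int_0^t\sum_{j=1}^m|C_j\tus|^2_\rH\,ds$, as claimed.

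The step requiring the most care is the passage to the limit $N\to\infty$, i.e. interchanging the infinite summation over the basis with the conditional expectation defining the martingale property; I expect this to be the main obstacle. I would handle it by dominated convergence: Assumption $(A.1)$ bounds the integrand by $\sum_{j=1}^m|C_j\tus|^2_\rH\le mK_c^2\|\tus\|^2_\rV$, which is integrable over $[0,T]\times\tom$ since $\tilde{\Eb}\int_0^T\|\tus\|^2_\rV\,ds\le T\,\tilde{\Eb}(\sup_{s\in[0,T]}\|\tus\|^2_\rV)<\infty$ by \eqref{eq:5.30}; moreover $\tilde{M}(t)$ is square integrable in $\rH$, as inherited from the uniform bound \eqref{eq:5.41} on the quadratic variations of the approximating martingales $\tilde{M}_n$ of Lemma~\ref{lemma5.7} through the convergence established in Lemmas~\ref{lemma5.9}, \ref{lemma5.10} and \ref{lemma5.11}. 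These two bounds make the truncated family uniformly integrable, so the martingale property survives in the limit and the identification of $\langle\langle\tilde{M}\rangle\rangle$ follows.
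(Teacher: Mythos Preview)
Your proposal is correct and follows essentially the same route as the paper: the paper does not give an explicit proof of Corollary~\ref{cor_quadratic} but simply states that it follows from Lemmas~\ref{lemma5.7}, \ref{lemma5.10} and \ref{lemma5.11}, i.e.\ from the limiting relations \eqref{eq:5.48}--\eqref{eq:5.49}. What you have done is spell out the standard passage from the scalar cross-variation identities \eqref{eq:5.49} to the $\rH$-valued quadratic variation via an orthonormal basis and Parseval, together with the uniform integrability needed to sum over $k$; this is exactly the implicit content of the paper's one-line inference.
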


\noindent \textbf{Theorem~\ref{thm5.1} proof continued.} Now we apply the idea analogous to that used by Da Prato and Zabczyk, see \cite[Section~8.3]{[DZ92]}. By Lemma~\ref{lemma_mart} and Corollary~\ref{cor_martingale}, we infer that $\tilde{M}(t)$, $t \in [0, T]$ is an $\rH$-valued continuous square integrable martingale with respect to the filtration $\tilde{\mathbb{F}} = (\tilde{\mathcal{F}_t})_{t \ge 0}$. Moreover, by Corollary~\ref{cor_quadratic} the quadratic variation of $\tilde{M}$ is given by
\[\langle \langle \tilde{M}\rangle \rangle_t = \int_0^t \sum_{j=1}^m \left|C_j \tus\right|^2_\rH \,ds\,,\quad \quad t \in [0,T]\,.\]
Therefore by the martingale representation theorem, there exist
\begin{itemize}
\item a stochastic basis $(\tilde{\tilde{\Omega}}, \tilde{\tilde{\mathcal{F}}}, \tilde{\tilde{\mathcal{F}}}_{t \geq 0}, \tilde{\tilde{\mathbb{P}}})$,
\item a $\R^m-$valued $\tilde{\tilde{\mathbb{F}}}-$Wiener process $\tilde{\tilde{W}}(t)$ defined on this basis,
\item and a progressively measurable process $\tilde{\tu}(t)$ such that for all $t \in [0,T]$ and $\v \in \rV \colon$
\begin{align*}
&\langle \tilde{\tu}(t), \v \rangle - \langle \tilde{\tilde{u}}(0), \v \rangle + \int_0^t \langle \mathrm{A} \tilde{\tu}(s), \v \rangle\,ds + \int_0^t \langle B(\tilde{\tu}(s)), \v \rangle \, ds\\
&~~ = \int_0^t |\nabla \tilde{\tu}(s)|_{L^2}^2 \langle \tilde{\tu}(s), \v \rangle\,ds + \dfrac{1}{2}\int_0^t \sum_{j=1}^m \langle C_j^2\tilde{\tu}(s), \v \rangle \,ds + \int_0^t \sum_{j=1}^m \langle C_j \tilde{\tu}(s), \v \rangle \,d\tilde{\tilde{W}}(s).
\end{align*}
\end{itemize}
Thus the conditions from Definition~\ref{defn4.1} hold with $(\hat{\Omega}, \hat{\mathcal{F}}, \{\hat{\mathcal{F}}_t\}_{t \geq 0}, \hat{\mathbb{P}}) = (\tilde{\tilde{\Omega}}, \tilde{\tilde{\mathcal{F}}}, \{\tilde{\tilde{\mathcal{F}}}_t\}_{t \geq 0}, \tilde{\tilde{\mathbb{P}}})$, $\hat{W} = \tilde{\tilde{W}}$ and $\hat{u} = \tilde{\tilde{u}}$. Hence the proof of Theorem~\ref{thm5.1} is complete.

\section{Pathwise uniqueness and strong solution}
\label{s:6}

In this section we will show that the solutions of \eqref{eq:4.1} are pathwise unique and that \eqref{eq:4.1} has a strong solution. In the previous section we showed that paths of martingale solution $u$ of \eqref{eq:4.1} belong to $\ccal([0,T]; \rV_{\mathrm{w}}) \cap L^2(0,T; \mathrm{D}(\mathrm{A}))$. We start by proving Lemma~\ref{lemma6.1}, in particular showing $u \in \ccal([0,T]; \rV) \cap L^2(0,T; \mathrm{D}(A))$.

\begin{proof}[Proof of Lemma~\ref{lemma6.1}]
$u$ is a martingale solution of \eqref{eq:4.1} thus, $u \in \ccal ([0,T]; \rV_{\mathrm{w}}) \cap L^2(0,T; \rD(\mathrm{A}))$ $\hp-$a.s. We start by showing that RHS of \eqref{eq:6.2} makes sense. In order to do so we will show that each term on the RHS is well defined.

Firstly we consider the non-linear term arising from Navier-Stokes. Using \eqref{eq:2.3}, the H\"older inequality, \eqref{eq:invariance} and \eqref{eq:6.1}, we have the following bounds $\colon$
\begin{align*}
\hat{\E} & \int_0^T |B(u(s))|^2_\rH\,ds \le 2 \hat{\E} \int_0^T |u(s)|_\rH |\nabla u(s)|^2_{L^2}|u(s)|_{\mathrm{D}(\mathrm{A})}\,ds \\
& \le 2 T^{1/2} \left(\hat{\E} \sup_{s \in [0,T]}\|u(s)\|^4_\rV \right)^{1/2} \left(\hat{\E} \int_0^T |u(s)|^2_{\mathrm{D}(\mathrm{A})}\,ds \right)^{1/2} < \infty.
\end{align*}

Using \eqref{eq:invariance}, the H\"older inequality, \eqref{eq:5.26}, \eqref{eq:5.27} and \eqref{eq:6.1} we have the following inequalities for the non-linear term generated from the projection of the Stokes operator,
\begin{align*}
\hat{\E} \int_0^T \left||\nabla u(s)|^2_{L^2} u(s)\right|^2_\rH\,ds = \hat{\E} \int_0^T|\nabla u(s)|^4_{L^2}\,ds \le T  \left(\hat{\E} \sup_{s \in [0,T]}\|u(s)\|^4_\rV \right) < \infty.
\end{align*}

Next we deal with the correction term arising from the Stratonovich integral. Using Assumption $(A.1)$ and estimate \eqref{eq:6.1}, for every $j \in \{1, \cdots, m\}$ we have
\begin{align*}
\hat{\E} \int_0^T |C_j^2 u(s)|^2_\rH \le K_c^2 \hat{\E} \int_0^T |u(s)|^2_{\mathrm{D}(A)}\,ds < \infty,
\end{align*}
where $K_c$ is defined in equality \eqref{eqn-K_c}.

We are left to show that the It\^o integral belongs to $L^2(\Omega \times [0,T];\rV)$. Due to It\^o isometry it is enough to show that for every $j \in \{1, \cdots, m\}$
\begin{equation}
\label{eq:6.3}
\hat{\E} \int_0^T \|C_j u(s)\|^2_\rV \,ds < \infty.
\end{equation}
Using Assumption $(A.1)$ and estimate \eqref{eq:6.1}, we have
\begin{align*}
\hat{\E} \int_0^T\|C_j u(s)\|_\rV^2\,ds \le K_c \,\hat{\E} \int_0^T |u(s)|^2_{\mathrm{D}(\mathrm{A})}\,ds < \infty.
\end{align*}
Thus we have shown that each term in \eqref{eq:6.2} is well defined. Now we will show that the equality holds.

Since $u$ is a martingale solution of \eqref{eq:4.1}, for every $\v \in \rV$ and $t \in [0,T]$ it satisfies the equality \eqref{eq:4.2}, i.e. $\hp-$a.s.
\begin{equation*}
\begin{split}
&\langle u(t), \v \rangle - \langle u_0, \v \rangle + \int_0^t \langle Au(s), \v \rangle\,ds + \int_0^t \langle B(u(s)), \v \rangle\,ds\\
&=  \int_0^t |\nabla u(s)|_{L^2}^2 \langle u(s), \v \rangle\,ds + \frac12\int_0^t \sum_{j=1}^m \langle C_j^2 u(s), \v \rangle\,ds + \int_0^t \sum_{j=1}^m  \langle C_ju(s),\v\rangle\,d\hat{W}_j(s).
\end{split}
\end{equation*}
Note that the above equation holds true for every $\v \in \mathcal{V}$(as defined in \eqref{eq:2.1}) and hence \eqref{eq:6.2} holds in the distribution sense. But since $\mathcal{V}$ is dense in $\rV$, equality \eqref{eq:6.2} holds true almost everywhere, which justifies Remark~\ref{rem3.5}.

We use \cite[Lemma~4.1]{[Pardoux75]} to prove the first part of the lemma. We work with the $\mathrm{D}(\mathrm{A}) \subset \rV \subset \rH$ space triple. Let us rewrite equation \eqref{eq:6.2} in the following form
\begin{equation*}
u(t) = u_0 + \int_0^t g(s)\,ds + N(t),
\end{equation*}
where $g$ contains all the deterministic terms and $N$ corresponds to the noise term. We have shown that $g \in L^2(\Omega; L^2(0,T; \rH))$ and $N \in L^2(\Omega; L^2(0,T; \rV))$. Thus from \cite[Lemma~4.1]{[Pardoux75]} we infer that $u \in L^2(\Omega; \ccal([0,T]; \rV))$. This concludes the proof of lemma.
\end{proof}

In the following lemma we will prove that the solutions of \eqref{eq:4.1} are pathwise unique. The proof uses the Schmalfuss idea of application of the It\^o formula for appropriate function (see \cite{[Schmalfuss97]}).

\begin{lemma}
\label{lemma6.3}
Assume that the assumptions $(A.1) - (A.2)$ are satisfied. If $u_1, u_2$ are two martingale solutions of \eqref{eq:4.1} defined on the same filtered probability space $(\hat{\Omega}, \hat{\mathcal{F}}, \hat{\mathbb{F}}, \hp)$ then $\hp-$a.s. for all $t \in [0,T]$, $u_1(t) = u_2(t)$.
\end{lemma}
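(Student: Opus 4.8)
The plan is to set $w:=u_1-u_2$ and derive a differential inequality for $|w(t)|_\rH^2$ which, after stripping off a path-dependent but a.s.\ integrable factor in the spirit of Schmalfuss, forces $w\equiv 0$. By Lemma~\ref{lemma6.1} both solutions satisfy the It\^o form \eqref{eq:6.2}, have paths in $\ccal([0,T];\rV)\cap L^2(0,T;\rD(\mathrm{A}))$, and live on the common basis $(\ho,\hat{\mathcal F},\hat{\mathbb F},\hp)$ with the same driving process $\hat W$; hence $w$ solves $dw = \bigl[-\mathrm{A}w - (B(u_1)-B(u_2)) + (\|u_1\|_\rV^2 u_1 - \|u_2\|_\rV^2 u_2) + \tfrac12\sum_{j=1}^m C_j^2 w\bigr]\,dt + \sum_{j=1}^m C_j w\,d\hat W_j$. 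I would first apply the It\^o formula for $|\cdot|_\rH^2$ in the triple $\rD(\mathrm{A})\subset\rV\subset\rH$; the regularity coming from \eqref{eq:6.1} makes every drift term lie in $L^2(0,T;\rH)$ (for the constraint term one uses $|u_i|_\rH=1$, so $||u_i|_\rV^2u_i|_\rH=\|u_i\|_\rV^2$), which legitimises the formula.

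The crucial simplification is that the noise contributes nothing. Since each $C_j$ is skew-symmetric in $\rH$, one has $\langle w, C_j w\rangle_\rH = 0$, so the stochastic integral $\sum_j\langle w, C_j w\rangle_\rH\,d\hat W_j$ vanishes identically; moreover the It\^o correction $\tfrac12\sum_j|C_j w|_\rH^2$ is exactly cancelled by the Stratonovich drift $\tfrac12\sum_j\langle w, C_j^2 w\rangle_\rH = -\tfrac12\sum_j|C_j w|_\rH^2$. Thus no martingale survives, and using $\langle\mathrm{A}w,w\rangle_\rH=\|w\|_\rV^2$, the decomposition $B(u_1)-B(u_2)=B(u_1,w)+B(w,u_2)$ with $b(u_1,w,w)=0$ from \eqref{eq:2.4}, and $\|u_1\|_\rV^2u_1-\|u_2\|_\rV^2u_2 = \|u_1\|_\rV^2 w + (\|u_1\|_\rV^2-\|u_2\|_\rV^2)u_2$, I obtain the pathwise identity $\tfrac12 d|w|_\rH^2 = \bigl[-\|w\|_\rV^2 - b(w,u_2,w) + \|u_1\|_\rV^2|w|_\rH^2 + (\|u_1\|_\rV^2-\|u_2\|_\rV^2)\langle w, u_2\rangle_\rH\bigr]\,dt$.

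Next I would absorb the gradient content of the nonlinear terms into the dissipation $-\|w\|_\rV^2$. For the convective term, \eqref{eq:2.2} together with the $2$-D interpolation $|w|_{L^4}^2\le c|w|_\rH\|w\|_\rV$ gives $|b(w,u_2,w)|\le c|w|_\rH\|w\|_\rV\|u_2\|_\rV\le\tfrac14\|w\|_\rV^2 + c^2\|u_2\|_\rV^2|w|_\rH^2$. For the constraint term I would use $|\langle w,u_2\rangle_\rH|\le|w|_\rH$ (as $|u_2|_\rH=1$) and $|\|u_1\|_\rV^2-\|u_2\|_\rV^2|\le\|w\|_\rV(\|u_1\|_\rV+\|u_2\|_\rV)$, so that it is $\le\tfrac14\|w\|_\rV^2 + (\|u_1\|_\rV+\|u_2\|_\rV)^2|w|_\rH^2$. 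Collecting terms yields $\tfrac12 d|w|_\rH^2 \le -\tfrac12\|w\|_\rV^2\,dt + \phi(t)|w|_\rH^2\,dt$ with $\phi := c^2\|u_2\|_\rV^2 + \|u_1\|_\rV^2 + (\|u_1\|_\rV+\|u_2\|_\rV)^2$, and since both paths lie in $\ccal([0,T];\rV)$ we have $\int_0^T\phi(s)\,ds<\infty$ $\hp$-a.s.

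Finally I would implement the Schmalfuss device: put $r(t):=2\int_0^t\phi(s)\,ds$ (finite a.s.) and apply the product rule to $e^{-r(t)}|w(t)|_\rH^2$; the term $-r'(t)e^{-r(t)}|w(t)|_\rH^2$ precisely cancels $2\phi|w|_\rH^2$, giving $\tfrac{d}{dt}\bigl(e^{-r(t)}|w(t)|_\rH^2\bigr)\le 0$, hence $e^{-r(t)}|w(t)|_\rH^2\le|w(0)|_\rH^2 = 0$ because $u_1(0)=u_2(0)=u_0$. Since $e^{-r(t)}>0$ and $t\mapsto w(t)$ is $\rH$-continuous, this gives $w\equiv 0$, i.e.\ $u_1(t)=u_2(t)$ for all $t\in[0,T]$, $\hp$-a.s. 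The main obstacle is the term $(\|u_1\|_\rV^2-\|u_2\|_\rV^2)\langle w,u_2\rangle_\rH$ stemming from the projection onto $T_u\mathcal{M}$: it has no counterpart in the unconstrained equation, and controlling it is exactly what requires the $\ccal([0,T];\rV)$-regularity of Lemma~\ref{lemma6.1} and the exponential weight rather than a naive expectation-and-Gronwall argument, the coefficient $\phi$ being only $L^1$ in time and not in $L^\infty(\ho\times[0,T])$.
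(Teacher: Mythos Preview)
Your proof is correct and follows essentially the same Schmalfuss-type approach as the paper: subtract the two solutions, apply the It\^o formula to an exponentially weighted $\rH$-norm, exploit the skew-symmetry of $C_j$ to kill the martingale and cancel the Stratonovich--It\^o corrections, and absorb the convective and constraint terms into the dissipation via Young's inequality. The only cosmetic differences are that the paper introduces a stopping time $\tau_N$ (which, as you implicitly observe, is unnecessary once the stochastic integral vanishes and the argument becomes pathwise) and uses the symmetric decompositions $b(U,u_1,U)$ and $(|\nabla u_1|^2-|\nabla u_2|^2)u_1+|\nabla u_2|^2 U$ in place of your $b(w,u_2,w)$ and $\|u_1\|_\rV^2 w+(\|u_1\|_\rV^2-\|u_2\|_\rV^2)u_2$.
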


\begin{proof}

Let us denote the difference of the two solutions by $U:= u_1 - u_2$. Then $U$ satisfies the following equation
\begin{align}
\label{eq:6.4}
dU(t) + \left[ \rA U(t) + B(u_2(t)) - B(u_1(t))\right]\,dt & = \left[ |\nabla u_1(t)|_{L^2}^2u_1(t) - |\nabla u_2(t)|_{L^2}^2u_2(t)\right]\,dt \nonumber \\
 & \,\, \, +\, \sum_{j=1}^m C_j U(t) \circ dW_j(t), \quad t \in [0,T].
\end{align}
Let us define the stopping time
\begin{equation}
\label{eq:6.5}
\tau_N \colon = T \wedge \inf{ \{t \in [0,T] : \|u_1(t)\|_\rV^2 \vee \|u_2(t)\|^2_\rV > N \}}, \quad N \in \nat.
\end{equation}
Since $\hat{\mathbb{E}} \left[ \sup_{t \in [0,T]}\|u_i(t)\|^2_{\rV}\right] < \infty, \hp-$a.s. for $i = 1,2$, $\lim_{N \to \infty} \tau_N = T.$

We apply the It\^o formula to the function
\[F(t,x) = e^{-r(t)}|x|^2_\rH, \quad t \in [0,T],\; x \in \rV\]
where $r(t)$, $t \in [0,T]$ is a real valued function which will be defined precisely later in the proof.

Since
\[\dfrac{\partial F}{\partial t } = - r^\prime(t)e^{-r(t)}|x|^2_\rH, \quad  \dfrac{\partial F}{\partial x}(\cdot) = 2e^{-r(t)} \langle x, \cdot \rangle_\rH,\]
we obtain for all $t \in [0,T]$
\begin{align*}
& e^{-r(t \wedge \tau_N)}|U(t \wedge \tau_N)|^2_\rH = \int_0^{t \wedge \tau_N} e^{-r(s)} \left(-r^{\prime}(s)|U(s)|^2_\rH + 2 \langle -\mathrm{A} U(s) + B(u_1(s)) - B(u_2(s)), U(s) \rangle_\rH \right)\,ds \\
&\; +  \int_0^{t \wedge \tau_N} e^{-r(s)} \left(2 \langle |\nabla u_1(s)|_{L^2}^2u_1(s) - |\nabla u_2(s)|_{L^2}^2u_2(s), U(s)\rangle_\rH + \sum_{j=1}^m \langle C_j^2U(s), U(s) \rangle_\rH \right)\,ds \\
&\; + \dfrac12 \int_0^{t \wedge \tau_N} \sum_{j=1}^m Tr \left[ C_jU(s) \dfrac{\partial^2F}{\partial x^2} (C_jU(s))^\ast\right]\,ds + 2 \int_0^{t \wedge \tau_N} e^{-r(s)}  \sum_{j=1}^m \langle C_j U(s), U(s) \rangle_\rH dW(s).
\end{align*}
Thus using the Assumption (A.1), we obtain the following simplified expression
\begin{align*}
e^{-r(t \wedge \tau_N)}& |U(t \wedge \tau_N)|^2_\rH \le \int_0^{t \wedge \tau_N} e^{-r(s)} \left( -r^\prime(s)|U(s)|^2_\rH -2 \|U(s)\|^2_\rV - 2b(U(s), u_1(s), U(s))\right)\,ds \\
&\,\, + 2 \int_0^{t \wedge \tau_N} e^{-r(s)} \left((|\nabla u_1(s)|_{L^2}^2 - |\nabla u_2(s)|_{L^2}^2) \langle u_1(s), U(s) \rangle_\rH + |\nabla u_2(s)|_{L^2}^2 |U(s)|^2_\rH \right)\,ds \\
&\,\, + \int_0^{t \wedge \tau_N} e^{-r(s)}  \sum_{j=1}^m \left(\langle C_j^2U(s), U(s) \rangle_\rH + \dfrac12 \times 2 \langle C_j U(s), C_j U(s) \rangle_\rH \right)\,ds.
\end{align*}
Using \eqref{eq:2.2} and the Cauchy-Schwarz inequality we get
\begin{align*}
e^{-r(t \wedge \tau_N)}&|U(t \wedge \tau_N)|^2_\rH + 2 \int_0^{t \wedge \tau_N} e^{-r(s)} \|U(s)\|^2_\rV\,ds \\
& \le \int_0^{t \wedge \tau_N} e^{-r(s)} \left( -r^\prime(s)|U(s)|^2_\rH + 4 |U(s)|_\rH\|U(s)\|_\rV\|u_1(s)\|_\rV \right)\,ds\\
&\quad + 2 \int_0^{t \wedge \tau_N} e^{-r(s)}  \|U(s)\|_\rV \Big(|\nabla u_1(s)|_{L^2} + |\nabla u_2(s)|_{L^2} \Big)|u_1(s)|_\rH|U(s)|_\rH \,ds\\
&\quad + 2 \int_0^{t \wedge \tau_N} e^{-r(s)}|\nabla u_2(t)|_{L^2}^2|U(s)|^2_\rH\,ds.
\end{align*}
Using the Young inequality we obtain
\begin{align}
\label{eq:6.6}
e^{-r(t \wedge \tau_N)}|U(t \wedge \tau_N)|^2_\rH & + 2 \int_0^{t \wedge \tau_N} e^{-r(s)} \|U(s)\|^2_\rV\,ds \le \int_0^{t \wedge \tau_N} e^{-r(s)} \left[ -r^\prime(s) + 8 \|u_1(s)\|^2_\rV \right]|U(s)|^2_\rH \,ds \nonumber \\
& + 2 \int_0^{t \wedge \tau_N} e^{-r(s)}  \big(|\nabla u_1(s)|_{L^2} + |\nabla u_2(s)|_{L^2}\big)^2|u_1(s)|^2_\rH |U(s)|^2_\rH \,ds \nonumber \\
& \quad + \int_0^{t \wedge \tau_N} e^{-r(s)} \|U(s)\|^2_\rV\,ds.
\end{align}

Now choosing
\[r(t) := \int_0^t \left[ 8 \|u_1(s)\|^2_\rV + 2 \big(|\nabla u_1(s)|_{L^2} + |\nabla u_2(s)|_{L^2}\big)^2|u_1(s)|^2_\rH \right]\,ds,\]
inequality \eqref{eq:6.6} reduces to
\begin{align*}
e^{-r(t \wedge \tau_N)}|U(t \wedge \tau_N)|^2_\rH +  \int_0^{t \wedge \tau_N} e^{-r(s)} \|U(s)\|^2_\rV\,ds  \le 0.
\end{align*}
In particular
\begin{equation}
\label{eq:6.7}
\sup_{t \in [0,T]} \left[e^{-r(t \wedge \tau_N)}|U(t \wedge \tau_N)|^2_\rH \right] = 0.
\end{equation}

Note that since $u_1$ and $u_2$ are the martingale solutions of \eqref{eq:4.1} satisfying the estimates \eqref{eq:5.5} and \eqref{eq:5.7} and because of the Lemma~\ref{lemma5.3}, $r$ is well defined for all $t \in [0,T]$.

Since $\hp-$a.s. $\lim_{N \to \infty}\tau_N = T$ and $\hat{\E} \left[r(T)\right] < \infty$, thus from \eqref{eq:6.7} we infer that $\hp-$a.s. for all $t \in [0,T]$, $U(t) = 0$. The proof of the lemma is thus complete.
\end{proof}

\begin{definition}
\label{defn6.4}
Let $(\Omega^i, \mathcal{F}^i, \mathbb{F}^i, \mathbb{P}^i, W^i, u^i)$, $i = 1,2$ be the martingale solutions of \eqref{eq:4.1} with $u^i(0) = u_0$, $i=1,2$. Then we say that the solutions are \textbf{unique in law} if
\[\mathrm{Law}_{\mathbb{P}^1}(u^1) = \mathrm{Law}_{\mathbb{P}^2}(u^2)\,\mbox{on}\, \ccal([0, \infty); \rV_{\mathrm{w}}) \cap L^2([0, \infty); \rD(\mathrm{A})),\]
where $\mathrm{Law}_{\mathbb{P}^i}(u^i)$, $i = 1,2$ are by definition probability measures on $\ccal([0, \infty); \rV_{\mathrm{w}}) \cap L^2([0, \infty); \rD(\mathrm{A}))$.
\end{definition}

\begin{corollary}
\label{cor6.5}
Assume that assumptions $(A.1)-(A.2)$ are satisfied. Then
\begin{itemize}
\item[\rm{(1)}] There exists a pathwise unique strong solution of \eqref{eq:4.1}.

\item[\rm{(2)}] Moreover, if $(\Omega, \mathcal{F}, \mathbb{F}, \mathbb{P}, W, u)$ is a strong solution of \eqref{eq:4.1} then for $\mathbb{P}-$almost all $\omega \in \Omega$ the trajectory $u(\cdot, \omega)$ is equal almost everywhere to a continuous $\rV-$valued function defined on $[0,T]$.

\item[\rm{(3)}] The martingale solution of \eqref{eq:4.1} is unique in law.
\end{itemize}
\end{corollary}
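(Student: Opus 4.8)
The plan is to deduce Corollary~\ref{cor6.5} from the three ingredients already assembled: the existence of a martingale solution (Theorem~\ref{thm5.1}), the pathwise uniqueness of martingale solutions (Lemma~\ref{lemma6.3}), and a Yamada--Watanabe type theorem. The overall strategy follows the classical principle that \emph{existence of a martingale (weak) solution plus pathwise uniqueness implies existence of a pathwise unique strong solution and uniqueness in law}. Since our solutions live in the non-separable weakly-continuous space $\ccal([0,T];\rV_{\mathrm{w}}) \cap L^2(0,T;\rD(\mathrm{A}))$ rather than a standard Polish space of strongly continuous paths, I would invoke the version of the Yamada--Watanabe theorem due to Ondrej\'{a}t \cite{[Ondrejat04]}, which is tailored precisely to such generalised path spaces and was cited in the introduction for exactly this purpose.

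First I would establish item~(3), uniqueness in law. By Theorem~\ref{thm5.1} a martingale solution exists, and by Lemma~\ref{lemma6.3} pathwise uniqueness holds for any two martingale solutions carried by the \emph{same} stochastic basis. The abstract Yamada--Watanabe argument upgrades this same-basis pathwise uniqueness to uniqueness in law across \emph{different} bases: given two martingale solutions $(\Omega^i,\mathcal{F}^i,\mathbb{F}^i,\mathbb{P}^i,W^i,u^i)$, one constructs a common probability space supporting copies of both solutions driven by a single Wiener process (a Skorokhod-type joint representation), applies Lemma~\ref{lemma6.3} there to conclude the two copies coincide almost surely, and reads off equality of laws. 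This yields $\mathrm{Law}_{\mathbb{P}^1}(u^1)=\mathrm{Law}_{\mathbb{P}^2}(u^2)$ as in Definition~\ref{defn6.4}, proving~(3).

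Next, for item~(1), I would combine Theorem~\ref{thm5.1} with Lemma~\ref{lemma6.3} through Ondrej\'{a}t's theorem to produce a strong solution. The point is that existence of a martingale solution together with pathwise uniqueness implies that the martingale solution is in fact adapted to the filtration generated by the driving Wiener process, hence is a strong solution in the sense of Definition~\ref{defn4.2}; pathwise uniqueness of this strong solution is then immediate from Lemma~\ref{lemma6.3}. The bounds \eqref{eq:5.1} carry over to give \eqref{eq:6.8}. Item~(2) is then a direct consequence of Lemma~\ref{lemma6.1}: any strong solution is in particular a martingale solution satisfying \eqref{eq:6.1}, so Lemma~\ref{lemma6.1} guarantees that $\mathbb{P}$-almost every trajectory is almost everywhere equal to a continuous $\rV$-valued function, which also justifies Remark~\ref{rem3.9}.

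I expect the main obstacle to be verifying that the precise hypotheses of Ondrej\'{a}t's Yamada--Watanabe theorem are met in our functional-analytic setting. The delicate points are the measurability and separability requirements on the path space $\ccal([0,T];\rV_{\mathrm{w}}) \cap L^2(0,T;\rD(\mathrm{A}))$: one must confirm that this space is equipped with a $\sigma$-algebra generated by a countable separating family of continuous functionals (which is exactly the content established for $\mathcal{Z}_T$ in Lemma~\ref{lemma5.4.5} and can be adapted here), so that the joint Skorokhod representation underlying the theorem is applicable. Once the abstract framework is seen to apply, the three assertions follow formally; the conceptual work lies entirely in checking this compatibility rather than in any new estimate.
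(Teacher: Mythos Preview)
Your proposal is correct and follows essentially the same approach as the paper: invoke Theorem~\ref{thm5.1} and Lemma~\ref{lemma6.3} together with Ondrej\'{a}t's Yamada--Watanabe theorem \cite[Theorems~2,~11]{[Ondrejat04]} to obtain assertions~(1) and~(3), and deduce assertion~(2) directly from Lemma~\ref{lemma6.1}. The paper's own proof is a three-line citation of exactly these ingredients, without the additional discussion of the measurability hypotheses that you (reasonably) flag as the point requiring care.
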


\begin{proof}
By Theorem~\ref{thm5.1} there exists a martingale solution and in the Lemma~\ref{lemma6.3} we showed it is pathwise unique, thus assertion \rm{(1)} follows from \cite[Theorem~2]{[Ondrejat04]}. Assertion \rm{(2)} is a direct consequence of Lemma~\ref{lemma6.1}. Assertion \rm{(3)} follows from \cite[Theorems~2,11]{[Ondrejat04]}.
\end{proof}

Using Theorem~\ref{thm5.1}, Lemma~\ref{lemma6.3} and Corollary~\ref{cor6.5} one can infer Theorem~\ref{thm6.6}.

\begin{remark}
For any bounded Borel function $\varphi \in \mathcal{B}_b(\rV)$ and $t \ge 0$, we define
\begin{equation}
\label{eq:6.9}
(P_t \varphi)(u_0) = \E \left[ \varphi (u(t, u_0)) \right], \quad u_0 \in \rV.
\end{equation}
Then one can show that this family of semigroups is sequentially Feller \cite[Proposition~6.2]{[BMO16]}. In order to prove the existence of invariant measure following the idea from Maslowski-Seidler \cite{[MS99]} one requires to obtain certain boundedness in probability which we haven't been able to establish so far. Thus proving the existence of invariant measure for Stochastic Constrained Navier-Stokes equations on $\T$ is still open.
\end{remark}

\bibliographystyle{plain}

\begin{thebibliography}{}
\setlength{\itemsep}{2pt}
\bibitem{[Badrikian70]} A. Badrikian, \textit{S\'{e}minaire sur les fonctions al\'{e}atoires lin\'{e}aires et les mesures  cylindriques}, Lecture Notes in Mathematics, \textbf{139}, Springer Verlag (1970).

\bibitem{[BDT11]} V. Barbu, G. Da Prato and L. Tubaro, \textit{ A reflection type problem of the stochastic 2-D Navier-Stokes equations with periodic conditions}, Electronic Communications in Probability \textbf{16}, 304-313 (2011).

\bibitem{[Brezis83]} H. Brezis, {\sc Analyse fonctionnelle}, Masson (1983).

\bibitem{[BDM16]} Z. Brze\'zniak, G. Dhariwal and M. Mariani, \textit{ 2D constrained Navier-Stokes equations,}  J. Differential Equations \textbf{264}, no. 4, 2833--2864 (2018)

\bibitem{[BH10]} Z. Brze\'{z}niak , E. Hausenblas and P. Razafimandimby
     \textit{ Stochastic Reaction Diffusion equation driven by jump processes},  Potential Anal.  (2017)

\bibitem{[BM13]} Z. Brze\'{z}niak and E. Motyl \textit{ Existence of a martingale solution of the stochastic Navier-Stokes equations in  unbounded 2D and 3D-domains}, J. Differential Equations, \textbf{254}(4), 1627-1685 (2013).

\bibitem{[BM14]} Z. Brze\'{z}niak and E. Motyl, \textit{The existence of martingale solutions to the stochastic Boussinesq equations}, Global and Stochastic Analysis, \textbf{1}(2), 175-216 (2014).

\bibitem{[BMO16]} Z. Brze\'{z}niak, E. Motyl and M. Ondrej\'{a}t, \textit{ Invariant measure for the stochastic Navier-Stokes equations in unbounded 2D domains},   Ann. Probab. \textbf{45}, no. 5, 3145--3201  (2017)

\bibitem{[BO11]} Z. Brze\'zniak and M. Ondrej\'at, \textit{ Stochastic geometric wave equations with values in Riemanninan homogeneous spaces,} Quaderni di Matematica (2011).

\bibitem{[Hussain15]} J. Hussain, \textit{ Analysis of some deterministic and stochastic evolution equations with solutions taking values in  an infinite dimensional Hilbert manifold,} PhD Thesis, University of York (2015).

\bibitem{[DZ92]} G. Da Prato and J. Zabczyk, {\sc  Stochastic Equations in Infinite dimensions,} Cambridge University Press, (1992).

\bibitem{[Jakubowski97]} A. Jakubowski, \textit{ The almost sure Skorokhod representation for subsequences in nonmetric spaces}, Teor. Veroyatn. Primen. \textbf{42}(1), 209-216 (1997); translation in Theory Probab. Appl. \textbf{42}(1), 167-174 (1998).

\bibitem{[MS99]} B. Maslowski and J. Seidler, \textit{ On sequentially weakly Feller solutions to SPDE's}, Rendiconti Lincei Matematicae Applicazioni, \textbf{10}, 69-78 (1999).

\bibitem{[Metivier82]} M. M\'{e}tivier, {\sc Semimartingales}, Gruyter (1982).

\bibitem{[MR05]} R. Mikulevicius and B.L. Rozovskii, \textit{Global $L^2$-solutions of stochastic Navier-Stokes equations}, Annals of Probability, \textbf{33}(1), 137 -- 176 (2005).

\bibitem{[Motyl14]}  E. Motyl, \textit{Stochastic hydrodynamic-type evolution equations driven by L\'{e}vy noise in 3D unbounded domains - abstract framework and applications}, Stoch. Processes and their Appl. \bf 124 \rm , 2052-2097 (2014).

\bibitem{[Ondrejat04]} M. Ondrej\'{a}t, \textit{ Uniqueness for stochastic evolution equations in Banach spaces,} Dissertationes Mathematicae, \textbf{426}, 1-63 (2004).

\bibitem{[Pardoux75]} E. Pardoux, \textit{ \'{E}quations aux d\'{e}riv\'{e}es partielles stochastiques de type monotone}, S\'{e}minaire Jean Leray, \textbf{3}, 1-10 (1975).

\bibitem{[Pardoux79]} E. Pardoux, \textit{ Stochastic partial differential equations and filtering of diffusion processes}, Stochastics, \textbf{3}, 127-167 (1979).

\bibitem{[Schmalfuss97]} B. Schmalfuss, \textit{Qualitative properties of the stochastic Navier-Stokes equations, Nonlinear Analysis}, \textbf{28}(9), 1545-1563 (1997).

\bibitem{[Temam79]} R. Temam, {\sc Navier-Stokes Equations: Theory and Numerical Analysis,} North-Holland Publishing Company (1979).

\bibitem{[Temam95]} R. Temam, {\sc Navier-Stokes Equations and Nonlinear Functional Analysis,} Second Edition, SIAM (1995).

\bibitem{[VF88]} M. J. Vishik and A. V. Fursikov, {\sc Mathematical Problems of Statistical Hydrodynamics,} Kluwer Academic Publishers, Dordrecht (1988).
\end{thebibliography}

\end{document}